\newcounter{hypo}
\renewcommand{\thehypo}{H\arabic{hypo}}
\newenvironment{hypothesis}
  {\refstepcounter{hypo}
  \equation\tag{\thehypo}}
  {\endequation}
\newtheorem{teo}{Theorem}[section] 
\newtheorem{prop}{Proposition}[section] 
\newtheorem{lem}{Lemma}[section] 
\newtheorem{lemma}{Lemma}[section]
\newtheorem{defi}{Definition}[section] 
\newtheorem{rem}{Remark}[section]
\title{Existence, Stability and Controllability of the parabolic-parabolic thermistor model}
\author{Miguel R. Nuñez-Chávez \thanks{Departamento de Matemática, Universidade Federal do Mato Grosso, Cuiabá - MT, Brazil (miguel.chavez@ufmt.br)} \ \ \ \ \ Luis P. Yapu \thanks{Instituto de Matemática e Estatística, Universidade Federal Fluminense, Niterói - RJ, Brazil and 
Friedrich-Alexander-Universität Erlangen-Nürnberg, Germany
} \ \ \ \ \ \ Juan Límaco\thanks{Instituto de Matemática e Estatística, Universidade Federal Fluminense, Niterói - RJ, Brazil (jlimaco@id.uff.br).}}
\begin{document}
\maketitle

\numberwithin{equation}{section}

\textbf{Abstract:} In this article we establish the well-posedness, energy estimates, stability, and local null controllability for the thermistor system modeled by a parabolic-parabolic system using a
control force acting on just one equation of the system. The proof of the controllability is based on appropriate Carleman estimates and Liusternik's inverse function theorem to obtain the local controllability of the nonlinear system. The coupling of the system happens both in the terms of order zero and one, which requires the use of a special Carleman estimate for the system.
\\

\textbf{MSC Classification (2020)}: Primary: 35K65, 93B05; Secondary: 93C10. 

\textbf{keywords}: Thermistor model, Existence, Stability, Controllability, Nonlinear systems in Control Theory.

\section{Introduction}

Let $\Omega$ denote a bounded open, non-empty subset of $\mathbb{R}^N$ (with $N=1,2$ or $3$) with smooth boundary (at least $\mathcal{C}^3$). For any $T>0$, we denote $Q_T=\Omega\times (0,T)$ and $\Sigma_T=\partial \Omega \times (0,T)$. If $T=\infty$, we denote $Q_{\infty}=\Omega\times (0,\infty)$ and $\Sigma_{\infty} = \partial \Omega \times (0,\infty)$. 

We study a physical system describing a conductor with temperature $y(x,t)$ and electric potential $z(x,t)$. If $\mathcal{I}$ denotes the current density and $\mathcal{Q}$ the heat flow, then Ohm's and Fourier's laws are given, respectively, by
$$
\mathcal{I} = -\sigma(y) \nabla z, \qquad \mathcal{Q} = -\kappa(y) \nabla y,
$$
where $\kappa(y) > 0$ denotes the thermal conductivity and $\sigma(u)>0$ the electrical conductivity.

From conservation laws, we arrive to the following model that describes our physical system:
\begin{equation}
    \left\{
    \begin{array}{ll}
y_t - \nabla \cdot (\kappa(y) \nabla y) = \sigma(y) |\nabla z|^2, \\
z_t - \nabla \cdot (\sigma(y) \nabla z) = 0.
    \end{array}
    \right.
\end{equation}

Considering $0<T\leq \infty$, we study the following thermistor problem:
\begin{equation}\label{sistema_thermistor}
    \left\{
    \begin{array}{ll}
        y_t - \nabla \cdot (\kappa(y) \nabla y) = \sigma(y) |\nabla z|^2 + f, \quad &\text{in}\quad Q_{T}\\
        z_t - \nabla \cdot (\sigma(y) \nabla z) = g, \quad &\text{in}\quad Q_{T}, \\
        y=0, \quad z=z^*, \quad &\text{on}\quad \Sigma_{T}, \\
        y(x,0) = y_0(x), \ \  z(x,0)=z_0(x) \quad &\text{in}\quad \Omega,
    \end{array}
    \right.
\end{equation}

where $f$, $g$ are the forces acting inside the system \eqref{sistema_thermistor}, $z^*$ is the electric potential at the boundary of the system \eqref{sistema_thermistor}.\\
Hypothesis about the functions $f, g : Q_{\infty} \to \mathbb{R}$:
    \begin{hypothesis}\label{Hyp1}
        f, g \in \mathcal{X} := \Big\{ w \in L^2(Q_{\infty}) \ : \ w_t \in L^2(Q_{\infty}),\ w(0) \in H_0^1(\Omega) \Big\}.
    \end{hypothesis}
Hypothesis about the function $z^* : Q_{\infty} \to \mathbb{R}$:
    \begin{hypothesis}\label{Hyp2}
        z^* \in H^1(0,\infty;H^3(\Omega)),
    \end{hypothesis}
    \vspace{-1.3em}
    \begin{hypothesis}\label{Hyp3}
        \|z^*(t)\|_{H^3(\Omega)}^2 + \|z^*_t(t)\|_{H^3(\Omega)}^2 \leq C^* e^{-\gamma t},  \forall t > 0\  \text{ with } \ C^*,\gamma > 0.
    \end{hypothesis}
Hypothesis about the functions $\kappa, \sigma : \mathbb{R} \to \mathbb{R}$:
        \begin{hypothesis}\label{Hyp4}
            \kappa, \sigma \in C^2(\mathbb{R}),
        \end{hypothesis}
        \vspace{-1.3em}
        \begin{hypothesis}\label{Hyp5}
            0 < \kappa_0 \leq \kappa(\cdot) \leq \kappa_1, \ \  0 < \sigma_0 \leq \sigma(\cdot) \leq \sigma_1,
        \end{hypothesis}
        \vspace{-1em}
        \begin{hypothesis}\label{Hyp6}
            |\kappa'(\cdot)| + |\kappa''(\cdot)| + |\sigma'(\cdot)| + |\sigma''(\cdot)| \leq M\ \  \text{with} \   M>0.
        \end{hypothesis}
        
First, we reformulate the system \eqref{sistema_thermistor} in order to get homogeneous conditions on the boundary $\Sigma_T$ for both variables. Making $p=z-z^*$, we get the new equivalent system, 
\begin{equation}\label{sistema2_thermistor}
    \left\{
    \begin{array}{ll}
        y_t - \nabla \cdot (\kappa(y) \nabla y) = \sigma(y) |\nabla p|^2 + 2\sigma(y) \nabla p \cdot \nabla z^* + \sigma(y)|\nabla z^*|^2  +f,  &\text{in} \ Q_{T}\\
        p_t - \nabla \cdot (\sigma(y) \nabla p) - \nabla \cdot (\sigma(y) \nabla z^*) = - (z^*)_t + g,  &\text{in}\  Q_{T}, \\
        y=0, \quad p=0,  &\text{on}\  \Sigma_{T}, \\
        y(x,0) = y_0(x), \ \ \ p(x,0)= p_0(x) &\text{in}\  \Omega,
    \end{array}
    \right.
\end{equation}
where $p_0(x) = z_0(x) - z^*(x,0)$.\\

The main results of this paper are the following three theorems:
\begin{teo}\label{Theo1}
    Given $(y_{0}, p_0) \in [H^{3}(\Omega)\cap H^{1}_{0}(\Omega)]^2$ and assuming \eqref{Hyp1}-\eqref{Hyp2},  \eqref{Hyp4}-\eqref{Hyp6}. Then, the system \eqref{sistema2_thermistor} has a unique strong solution $(y,p)$ for small initial datum.
\end{teo}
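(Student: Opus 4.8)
The plan is to prove Theorem \ref{Theo1} by a Banach fixed‑point (contraction mapping) argument run in a space of strong solutions, using the smallness of the data to absorb the quadratic gradient nonlinearity. For $0<T\le\infty$ I would work in a Banach space $\mathcal{W}_T$ of strong‑solution regularity whose trace at $t=0$ is exactly $H^{3}(\Omega)\cap H^1_0(\Omega)$ — for instance $u\in C([0,T];H^{3}\cap H^1_0)$ with $u_t\in L^2(0,T;H^{1}_0)$ and the associated maximal‑parabolic‑regularity norm, the precise Sobolev orders being dictated by the requirement that the quadratic source $\sigma(\bar y)|\nabla\bar p|^2$ and the $z^*$‑dependent source $\nabla\cdot(\sigma(\bar y)\nabla z^*)$ land in the source space of the linear problem. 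Given $(\bar y,\bar p)\in\mathcal{W}_T\times\mathcal{W}_T$ I would freeze the coefficients in the principal parts and push everything else to the right‑hand side, defining $(y,p)=\Lambda(\bar y,\bar p)$ as the solution of the linear, decoupled system
\begin{equation*}
\left\{
\begin{array}{ll}
y_t-\nabla\cdot(\kappa(\bar y)\nabla y)=\sigma(\bar y)|\nabla\bar p|^2+2\sigma(\bar y)\nabla\bar p\cdot\nabla z^*+\sigma(\bar y)|\nabla z^*|^2+f, &\text{in } Q_T,\\[2pt]
p_t-\nabla\cdot(\sigma(\bar y)\nabla p)=\nabla\cdot(\sigma(\bar y)\nabla z^*)-(z^*)_t+g, &\text{in } Q_T,
\end{array}
\right.
\end{equation*}
together with the homogeneous boundary and initial conditions of \eqref{sistema2_thermistor}.

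The central analytic input is maximal $L^2$‑parabolic regularity for the scalar problem $u_t-\nabla\cdot(a\nabla u)=h$, $u|_{\Sigma_T}=0$, $u(0)=u_0$, with a variable coefficient $a$ that is uniformly elliptic and bounded by \eqref{Hyp5} and spatially $H^2$‑regular; the $\mathcal{C}^3$ regularity of $\partial\Omega$ assumed from the outset, together with \eqref{Hyp4}--\eqref{Hyp6}, gives a unique solution in $\mathcal{W}_T$ with a norm bound of the form $\|u\|_{\mathcal{W}_T}\le C(1+\|a\|_{L^\infty(H^2)})^{\theta}\big(\|u_0\|_{H^3}+\|h\|\big)$, the constant being uniform as long as $\|\bar y\|_{\mathcal{W}_T}$ stays bounded — this uses that $a=\kappa(\bar y)$ or $\sigma(\bar y)$ is bounded in $L^\infty(H^2)$ by composition, since $H^2(\Omega)$ is a Banach algebra embedded in $L^\infty(\Omega)$ for $N\le 3$ and $\kappa,\sigma\in C^2$ have bounded first and second derivatives. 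With this in hand I would estimate the sources: by the same algebra/embedding facts the quadratic term obeys $\|\sigma(\bar y)|\nabla\bar p|^2\|\le C\,\|\bar p\|_{\mathcal{W}_T}^2$, the mixed term is bounded by $C\,\|\bar p\|_{\mathcal{W}_T}\|z^*\|$, and the affine terms $\sigma(\bar y)|\nabla z^*|^2$, $\nabla\cdot(\sigma(\bar y)\nabla z^*)$, $(z^*)_t$, $f$, $g$ are controlled by the data via \eqref{Hyp1}--\eqref{Hyp2}. Writing $R=\|(\bar y,\bar p)\|_{\mathcal{W}_T\times\mathcal{W}_T}$ and $\delta$ for the combined size of $(y_0,p_0,f,g,z^*)$, this yields $\|\Lambda(\bar y,\bar p)\|_{\mathcal{W}_T\times\mathcal{W}_T}\le C(\delta+R\,\|z^*\|+R^2)$, so that for $R$ and $\delta$ small the map $\Lambda$ sends the closed ball $\overline{B}_R$ into itself.

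To obtain a contraction I would subtract the equations for $\Lambda(\bar y_1,\bar p_1)$ and $\Lambda(\bar y_2,\bar p_2)$ and apply the same linear estimate to the differences. Two families of terms arise: differences of the quadratic and affine sources, which are Lipschitz on $\overline{B}_R$ with constant $O(R+\|z^*\|)$ via the algebra property and the mean‑value identity $\sigma(\bar y_1)-\sigma(\bar y_2)=\big(\int_0^1\sigma'((1-s)\bar y_2+s\bar y_1)\,ds\big)(\bar y_1-\bar y_2)$ together with \eqref{Hyp6}; and differences of the frozen principal coefficients, which enter through $\nabla\cdot\big((\kappa(\bar y_1)-\kappa(\bar y_2))\nabla y_2\big)$ and are again $O(R)$‑Lipschitz because $y_2\in\overline{B}_R$. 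Hence for $R,\delta$ small enough $\Lambda$ is a contraction, and the Banach fixed‑point theorem produces a unique fixed point $(y,p)$, which is by construction a strong solution of \eqref{sistema2_thermistor}; uniqueness in the full class then follows from a standard energy/Grönwall estimate on the difference of two solutions. The main obstacle is the interplay between the quasilinear principal part and the quadratic gradient nonlinearity at high spatial regularity: one must simultaneously propagate enough regularity for $\kappa(\bar y),\sigma(\bar y)$ to make sense of the divergence‑form operators, keep the maximal‑regularity constant uniform over the ball, and match the regularity of the $z^*$‑generated source against that of the solution — and it is precisely here that the $\mathcal{C}^3$ boundary, the $H^2$‑algebra structure for $N\le 3$, the $C^2$‑boundedness \eqref{Hyp4}--\eqref{Hyp6} of $\kappa,\sigma$, and the smallness of the data (which alone closes the quadratic estimate, globally in time when $T=\infty$ thanks to the $L^2(0,\infty)$‑in‑time control of the forcing from \eqref{Hyp1}--\eqref{Hyp2}) are all indispensable.
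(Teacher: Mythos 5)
Your strategy (freeze the coefficients, solve a linear problem by maximal $L^2$ regularity, and contract on a small ball) is genuinely different from the paper's proof, which runs a Faedo--Galerkin scheme with a hierarchy of energy estimates (testing with $y_m$, $-\Delta y_m$, $-\Delta y_m'$, and the time-differentiated equations tested with $-\Delta y_m'$, $y_m''$), a smallness/continuation argument keeping the ellipticity margins positive, and compactness to pass to the limit. As written, however, your proposal has two genuine gaps. First, the contraction step cannot close under \eqref{Hyp4}--\eqref{Hyp6}. To apply your linear estimate to the difference of two frozen problems you must measure $\kappa(\bar y_1)-\kappa(\bar y_2)$ in (at least) $L^\infty(0,T;H^2(\Omega))$, and the second spatial derivatives of this difference contain the term $\bigl[\kappa''(\bar y_1)-\kappa''(\bar y_2)\bigr]\nabla \bar y_1\otimes\nabla\bar y_1$, which can only be bounded by $C\|\bar y_1-\bar y_2\|$ if $\kappa''$ is Lipschitz, i.e.\ if $\kappa\in C^{2,1}$ or $C^3$; the paper assumes only $\kappa,\sigma\in C^2$ with bounded first and second derivatives. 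The standard repair --- contract in a low norm such as $C([0,T];L^2)\cap L^2(0,T;H^1_0)$ while merely propagating the high-norm bounds, then conclude by weak compactness --- is precisely not what you propose (you contract in $\mathcal{W}_T$ itself), and with that repair the argument starts to resemble the paper's compactness step anyway.

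Second, your ``central analytic input'' is not an off-the-shelf theorem. You need maximal regularity at the level $u\in C([0,T];H^3)$, $u_t\in L^2(0,T;H^1_0)$, for a non-autonomous divergence-form operator whose coefficient $a=\kappa(\bar y)$ is only $H^1$ in time (since $a_t=\kappa'(\bar y)\bar y_t$) and $H^2$ in space, with a constant uniform over the ball and uniform in $T$ (the theorem covers $T=\infty$, and the paper's radius $r$ is independent of $T$). Strong-solution maximal regularity for non-autonomous forms with low time regularity of the coefficients is false in general, and the positive results require quantified time regularity; moreover, reaching $C([0,T];H^3)$ requires pointwise-in-time $H^1$ control of $u_t$ recovered from the equation by elliptic regularity, i.e.\ exactly the differentiated-equation estimates the paper performs (its Estimates III--IV), while any route through $L^2(0,T;H^4)$ is blocked because $\kappa(\bar y)\notin H^3$ when $\kappa$ is only $C^2$. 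So you must either prove this linear estimate yourself --- which amounts to rederiving \eqref{A-eq8} --- or weaken the working space to the paper's actual solution class, $H^1(0,T;H^2(\Omega))$ with $y_{tt},p_{tt}\in L^2(Q_T)$; note the theorem never claims $C([0,T];H^3)$ regularity of the solution, and the $H^3$ norm of the data is used only to control initial values of differentiated quantities such as $\|\nabla y_t(0)\|$.
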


\begin{teo}\label{Theo2}
    Given $(y_{0}, p_0) \in [H^{3}(\Omega)\cap H^{1}_{0}(\Omega)]^2$ and assuming \eqref{Hyp1}-\eqref{Hyp6}. Then, the system \eqref{sistema2_thermistor} is exponentially stable in $H^3$-norm for small initial datum.
\end{teo}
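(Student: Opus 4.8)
The plan is to run a hierarchy of energy estimates on the strong solution furnished by Theorem \ref{Theo1}, turning the dissipativity of the two diffusion operators—the uniform lower bounds $\kappa_0,\sigma_0>0$ in \eqref{Hyp5}—together with the Poincaré inequality, into an exponential decay mechanism, while the external data enter only through terms that decay exponentially thanks to \eqref{Hyp3} (treating the sources $f,g$ as absent or exponentially decaying, so that with \eqref{Hyp3} every external contribution carries a factor $e^{-\gamma t}$). Since the solution already exists and is strong on $Q_\infty$, all the integrations by parts below are legitimate; what remains is to build a single higher-order functional $E(t)$, equivalent to $\|y(t)\|_{H^3}^2+\|p(t)\|_{H^3}^2$ augmented by time-derivative norms, satisfying a differential inequality of the form
\begin{equation*}
\frac{d}{dt}E(t)+\mu E(t)\le C\,e^{-\gamma t}+C\,E(t)^{3/2},
\end{equation*}
and then to close it by a continuation/bootstrap argument under the smallness hypothesis, yielding $E(t)\le C\,e^{-\mu' t}$ for some $0<\mu'\le\min\{\mu,\gamma\}$.

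First I would establish the base $L^2$ estimate: testing the first equation of \eqref{sistema2_thermistor} with $y$ and the second with $p$, \eqref{Hyp5} produces the dissipation $\kappa_0\|\nabla y\|_{L^2}^2$ and $\sigma_0\|\nabla p\|_{L^2}^2$, and since $y,p\in H_0^1(\Omega)$ the Poincaré inequality converts these into coercive terms $-c\|y\|_{L^2}^2$ and $-c\|p\|_{L^2}^2$. The troublesome contributions are the quadratic gradient term $\sigma(y)|\nabla p|^2$, the mixed first-order coupling $2\sigma(y)\nabla p\cdot\nabla z^*$, and the pure term $\sigma(y)|\nabla z^*|^2$; the last two are bounded via \eqref{Hyp3} and absorbed as $C e^{-\gamma t}$ plus a small fraction of the dissipation, while the genuinely nonlinear self-interaction is controlled by Sobolev embeddings (recall $N\le 3$) and kept small by the a priori smallness of the solution.

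Next I would climb the regularity ladder. For the $H^1$ and $H^2$ bounds I would test with $-\Delta y,\,-\Delta p$ and with $y_t,\,p_t$, and for the top $H^3$ level I would differentiate both equations in time, estimate $(y_t,p_t)$ in $L^2$ and $H^1$, and then recover the full spatial $H^3$ regularity from elliptic regularity applied to the quasilinear operators $-\nabla\cdot(\kappa(y)\nabla\,\cdot)$ and $-\nabla\cdot(\sigma(y)\nabla\,\cdot)$, using \eqref{Hyp4}--\eqref{Hyp6} to control the variable coefficients and their first two derivatives. At each level the order-one coupling (the differentiated products $\nabla p\cdot\nabla z^*$ and $\nabla\cdot(\sigma(y)\nabla z^*)$, together with $(z^*)_t$) is handled by \eqref{Hyp2}--\eqref{Hyp3}, so that it always contributes a factor $e^{-\gamma t}$, whereas every purely nonlinear term carries at least one extra power of the solution norm and is therefore absorbable under smallness.

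The main obstacle will be closing the top-order estimate: the quadratic gradient nonlinearity $\sigma(y)|\nabla p|^2$, after enough differentiations to reach $H^3$, generates terms such as $\nabla^2 p\cdot\nabla p$ and $\nabla^3 p\cdot\nabla p$ that are \emph{not} lower order and cannot be dominated by the dissipation unless the solution is small; making this rigorous requires careful Gagliardo–Nirenberg interpolation to redistribute derivatives and honest bookkeeping of the quasilinear coefficients $\kappa(y),\sigma(y)$, whose derivatives—bounded via \eqref{Hyp6}—multiply high-order derivatives of $y$. Once the differential inequality for $E$ is established with a strictly positive dissipation rate $\mu$ exceeding the unavoidable nonlinear loss (guaranteed by choosing the initial data sufficiently small), a Gronwall argument combined with the $e^{-\gamma t}$ forcing delivers exponential decay at rate $\mu'=\min\{\mu,\gamma\}$, which is the asserted $H^3$-exponential stability; the smallness threshold propagates in time precisely because the inequality is dissipative, so the bootstrap never breaks.
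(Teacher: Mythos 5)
Your proposal is correct in substance and, at the level of its detailed steps, follows the same strategy as the paper: energy estimates up to the time-differentiated equations, smallness to absorb the quasilinear terms into the dissipation, the exponential decay \eqref{Hyp3} of $z^*$ as forcing, Gr\"onwall, and—crucially—recovery of the $H^3$-norm by elliptic regularity rather than by a direct third-order energy estimate. Where you differ is in the packaging, and the paper's packaging is the one that actually closes. You announce a differential inequality $\frac{d}{dt}E+\mu E\le Ce^{-\gamma t}+CE^{3/2}$ for a functional $E$ \emph{equivalent to} $\|y\|_{H^3}^2+\|p\|_{H^3}^2$; taken literally this is not available: the strong solution of Theorem \ref{Theo1} only has $y,p\in H^1(0,T;H^2)$ with $y_{tt},p_{tt}\in L^2$, so $\frac{d}{dt}\|y\|_{H^3}^2$ is not defined along the flow, and since \eqref{Hyp4}--\eqref{Hyp6} give only $C^2$ coefficients one cannot differentiate the equations a third time either. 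The paper instead proves the Gr\"onwall inequality only for the second-order functional $S(t)=\|y\|^2+\|\nabla y\|^2+\|\Delta y\|^2+\|\nabla y_t\|^2+(\text{same in }p)$—and gets it for free from the estimate \eqref{A-eq8} of the existence proof with $f=g=0$, already linear after absorption (no $E^{3/2}$ term, hence no separate continuation argument)—and then transfers the decay to $H^3$ by a \emph{static}, pointwise-in-time elliptic estimate: writing $h=-\Delta y$, $k=-\Delta p$ from the equations themselves, one shows $\|y\|_{H^3}^2+\|p\|_{H^3}^2\le C(S+S^2+S^3+S^4)$. The technical heart of that transfer is exactly the obstacle you flagged (terms like $\Delta y\,\nabla y$ are only $L^{3/2}$, not $L^2$, given second-order information); the paper resolves it not by Gagliardo--Nirenberg absorption but by a bootstrap $h,k\in W^{1,3/2}\Rightarrow y,p\in W^{3,3/2}\Rightarrow \Delta y\in L^3$, after which the products land in $L^2$. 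So: same mechanism, but you should replace your headline inequality by the two-step structure (Gr\"onwall at the $S$-level, then static elliptic upgrade), since that is both what the regularity permits and what the $C^2$ hypotheses on $\kappa,\sigma$ force.
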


Now, let $\omega \subset \Omega$ be an open set of class $\mathcal{C}^1$ such that $\partial \Omega \cap \partial \omega \neq \emptyset$ and $T \in (0, \infty)$. We analyze the controllability of the thermistor problem \eqref{sistema2_thermistor} by an interior control $v$. That is, we study either
\begin{equation}\label{sistema2}
    \left\{
    \begin{array}{ll}
        y_t - \nabla \cdot (\kappa(y) \nabla y) = \sigma(y) |\nabla p|^2 + 2\sigma(y) \nabla p \cdot \nabla z^* + \sigma(y)|\nabla z^*|^2 + v 1_\omega,  &\text{in }\ Q_T\\
        p_t - \nabla \cdot (\sigma(y) \nabla p) - \nabla \cdot (\sigma(y) \nabla z^*) =  - (z^*)_t , &\text{in }\ Q_T, \\
        y=0, \quad p=0, &\text{on }\ \Sigma_T, \\
        y(x,0) = y_0(x), \ \ \ p(x,0)= p_0(x)  &\text{in }\ \Omega,
    \end{array}
    \right.
\end{equation}
or
\begin{equation}\label{sistema2_control2}
    \left\{
    \begin{array}{ll}
        y_t - \nabla \cdot (\kappa(y) \nabla y) = \sigma(y) |\nabla p|^2 + 2\sigma(y) \nabla p \cdot \nabla z^* + \sigma(y) |\nabla z^*|^2, \quad &\text{in}\quad Q_T\\
        p_t - \nabla \cdot (\sigma(y) \nabla p) - \nabla \cdot (\sigma(y) \nabla z^*) = - (z^*)_t + v 1_\omega , \quad &\text{in}\quad Q_T, \\
        y=0, \quad p=0, \quad &\text{on}\quad \Sigma_T, \\
        y(x,0) = y_0(x), \quad p(x,0) = p_0(x) \quad &\text{in}\quad \Omega,
    \end{array}
    \right.
\end{equation}
where $p_0(x) = z_0(x) - z^*(x,0)$.
\begin{defi}\label{def_1_Sec4}
    The system \eqref{sistema2} $($or \eqref{sistema2_control2}$)$ is said \emph{locally null controllable at time $T>0$}, if there exists $\epsilon>0$ such that, for every $(y_0, p_0) \in [H^3(\Omega) \cap H_0^1(\Omega)]^2$ and $z^*$ satisfying \eqref{Hyp2}, with  
    $$
    \|y_0\|_{H^3(\Omega)} + \|p_0\|_{H^3(\Omega)} + \|z^*\|_{H^{1}(0,\infty;H^{3}(\Omega))} < \epsilon,
    $$
    then there exists a control $v \in L^2(\omega \times (0,T))$ such that the solution of the system \eqref{sistema2} $($or \eqref{sistema2_control2}$)$ verifies $y(\cdot,T)=0$\ and\ $p(\cdot,T)=0$ in $\Omega$.
\end{defi}

Due to the change of variable $p=z-z^*$, the null controllability of \eqref{sistema2} (or \eqref{sistema2_control2}) is equivalent to the controllability of system \eqref{sistema2_thermistor}, in the following sense:
$y(T)=0$ and $z(T)=z^*(T)$ in $\Omega$.\\
If, additionally,
    \begin{hypothesis}\label{Hyp7}
        z^* \in W^{2,\infty}(Q_T)\ \ \text{and}\ \  \nabla z^* \in [C^3(\overline{Q}_T)]^N,
    \end{hypothesis}
    \vspace{-1.5em}
    \begin{hypothesis}\label{Hyp8}
        \centering
        \begin{split}
        \nabla z^*(x_0,t) \cdot \nu(x_0) \neq 0 \ \   \forall t \in [t_1, t_2] \ \   \text{ for some}\ x_0 \in\gamma \subset \partial\Omega \cap \partial \omega, \\
        \text{where } 0<t_1 < t_2 < T \text{ and } \  t_1 \text{ and } t_2 \text{ verify the condition } \eqref{Eq-Carleman-1}.
        \end{split}
    \end{hypothesis}
    Let $\rho=\rho(t)$ be a positive function that explodes exponentially at time  $t=T$,
    \begin{hypothesis}\label{Hyp9}
        \rho |\nabla z^*|^2, \rho |\nabla z^*_t|^2, \rho \Delta z^*, \rho \Delta z^*_t, \rho z^*_t, \rho z^*_{tt} \in L^2(Q_T),  
    \end{hypothesis}
    \vspace{-1em}
    \begin{hypothesis}\label{Hyp10}    
        |\nabla z^*(0)|^2, \sigma(0) \Delta z^*(0)- z^*_t(0) \in H^1_0(\Omega).
    \end{hypothesis}
\begin{teo}\label{maintheorem}
Assuming \eqref{Hyp4}-\eqref{Hyp10}, then for any $T\in (0,\infty)$ the systems \eqref{sistema2} and \eqref{sistema2_control2} are locally null controllable at time $T$.
\end{teo}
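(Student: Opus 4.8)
The plan is to follow the classical three-step strategy for local null controllability of nonlinear parabolic systems: linearize the controlled problem about the null state, prove null controllability of the linearized system through a global Carleman estimate for its adjoint, and then transfer this property to the nonlinear systems \eqref{sistema2} and \eqref{sistema2_control2} by Liusternik's inverse function theorem. Throughout, $z^*$ and its derivatives are regarded as given variable coefficients and source terms, and hypotheses \eqref{Hyp7}--\eqref{Hyp10} are exactly what is needed to place those terms in the weighted functional spaces introduced below.

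First I would linearize about $(y,p)=(0,0)$. Writing the quadratic and composite nonlinearities together as $N(y,p)$, the linear part inherits two kinds of coupling: a zeroth-order coupling (through $\sigma'(0)\,\Delta z^*$ and $\sigma'(0)|\nabla z^*|^2$) and, crucially, a first-order coupling whose coefficient is $\nabla z^*$, coming from $2\sigma(0)\,\nabla z^*\cdot\nabla p$ in the first equation and $\sigma'(0)\,\nabla z^*\cdot\nabla y$ in the second. I would then introduce Carleman weights $\alpha,\xi$ built from a weight function adapted to $\omega$, together with the exponentially exploding time weight $\rho$ of \eqref{Hyp9}, and define Banach spaces $E$ of states enforcing $y(\cdot,T)=p(\cdot,T)=0$ and the required weighted integrability, and $F$ of right-hand sides paired with initial data. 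Hypotheses \eqref{Hyp9}--\eqref{Hyp10} then guarantee that the $z^*$-generated sources and the associated compatibility conditions lie in $F$.

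The core step is a global Carleman estimate for the adjoint of the linearized system. Since the control acts on a single equation, both adjoint components must be recovered from one local observation on $\omega$, and the transmission mechanism is precisely the first-order coupling term with coefficient $\nabla z^*$. Here I would exploit that $\omega$ touches the boundary together with \eqref{Hyp8}: choosing the Carleman weight so that its normal derivative on the arc $\gamma\subset\partial\Omega\cap\partial\omega$ has a definite sign, the integration by parts in the Carleman computation produces a boundary integral on $\gamma$ that, thanks to $\nabla z^*\cdot\nu\neq 0$, can be used to dominate the coupling contributions rather than being discarded. Combining this special estimate with a standard Carleman inequality then absorbs both the zeroth- and first-order coupling terms into the dominant weighted terms and yields the observability inequality. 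This is the step I expect to be the main obstacle, since controllability through gradient coupling by a single control is delicate and relies entirely on the nondegeneracy encoded in \eqref{Hyp8}; the estimate must moreover be arranged so that it applies whether the control sits in the $y$-equation (for \eqref{sistema2}) or in the $p$-equation (for \eqref{sistema2_control2}). By the usual duality and penalization argument, the observability inequality provides, for every datum in $F$, a control $v\in L^2(\omega\times(0,T))$ and a state in $E$ solving the linearized system and vanishing at time $T$.

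Finally I would define the nonlinear map $\mathcal{H}\colon E\times L^2(\omega\times(0,T))\to F$ sending $(y,p,v)$ to the two PDE residuals together with the initial traces, so that solving $\mathcal{H}(y,p,v)=(0,0,y_0,p_0)$ with $(y,p)\in E$ encodes a controlled trajectory steered to zero at time $T$. Using \eqref{Hyp4}--\eqref{Hyp6}, the well-posedness theory of Theorem \ref{Theo1}, and the embeddings built into the weighted space $E$, one checks that $\mathcal{H}$ is well defined and of class $C^1$ near the origin, the exponential weights ensuring that the quadratic and composite terms such as $\sigma(y)|\nabla p|^2$ map $E$ into $F$. The derivative $\mathcal{H}'(0,0,0)$ is surjective by the previous step, while $\mathcal{H}(0,0,0)$ consists only of the $z^*$-sources, so Liusternik's inverse function theorem shows that the range of $\mathcal{H}$ covers a neighborhood of $\mathcal{H}(0,0,0)$. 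Hence, for all data with $\|y_0\|_{H^3(\Omega)}+\|p_0\|_{H^3(\Omega)}+\|z^*\|_{H^1(0,\infty;H^3(\Omega))}<\epsilon$, the target $(0,0,y_0,p_0)$ is attained, yielding a solution $(y,p,v)$ of the nonlinear problem with $y(\cdot,T)=p(\cdot,T)=0$, which is exactly the asserted local null controllability of \eqref{sistema2} and \eqref{sistema2_control2}.
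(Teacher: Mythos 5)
Your overall architecture --- linearize \eqref{sistema2} about zero, prove a Carleman estimate for the adjoint system \eqref{sistema_adj} that exploits \eqref{Hyp8} through the portion of $\partial\Omega$ shared with $\partial\omega$, deduce null controllability of the linearized system in weighted spaces, and conclude with Liusternik's theorem (Theorem \ref{Liusternik}) --- is exactly the paper's strategy; even your remark that the map evaluated at the origin consists of the $z^*$-sources, handled by \eqref{Hyp9}--\eqref{Hyp10}, matches the paper's final step (the paper merely moves those sources from $\mathcal{A}(0)$ into the target, which is immaterial), and your ``duality/penalization'' step is the Fursikov--Imanuvilov extremal problem of Proposition \ref{Prop3}.

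The genuine gap is in your core Carleman step. Hypothesis \eqref{Hyp8} asserts $\nabla z^*(x_0,t)\cdot\nu(x_0)\neq 0$ only for $t\in[t_1,t_2]$ with $0<t_1<t_2<T$, not on all of $(0,T)$. Your plan is a single global-in-time Carleman computation in which the boundary integral on $\gamma$ is exploited through the nondegeneracy of $\nabla z^*\cdot\nu$; outside the window $[t_1,t_2]$ that coefficient may vanish, the first-order coupling can no longer transmit information to the observation of $\phi$ alone, and the computation as described breaks down. The paper does not derive this boundary-coupling estimate at all: it applies Theorem 2.4 of \cite{Benab} to the restrictions $\phi|_{\Omega\times[t_1,t_2]}$, $\psi|_{\Omega\times[t_1,t_2]}$, obtaining the estimate only on the slab $\Omega\times(t_1,t_2)$, and the actual content of Proposition \ref{Prop_Benabdallah} is the propagation of that estimate to all of $Q_T$: the times $t_1,t_2$ are chosen so that the weight $(s\xi)^{\tau+a}e^{-2s\alpha}$ on $(0,T)\setminus[t_1,t_2]$ is dominated by its values inside the window (condition \eqref{Eq-Carleman-1}), and an explicit bijection between $(0,t_1)\cup(t_2,T)$ and $(t_1,t_2)$ converts integrals of $Q\varphi$ outside the window into integrals inside it. Without this propagation step --- or a substitute such as backward energy estimates glued to the windowed estimate, which would still have to be made compatible with the weights $\rho,\rho_0,\dots,\rho_6$ exploding at $t=T$ --- you do not obtain the global weighted inequalities (Propositions \ref{Prop2}, \ref{Observabillity}, \ref{Prop3}, \ref{Prop4}) on which the surjectivity of the derivative in the Liusternik step rests. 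A secondary point: re-deriving the boundary-coupling Carleman estimate ``by hand,'' as you propose, is itself a substantial piece of work; the paper is structured precisely so that this input is imported from \cite{Benab} rather than proved.
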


The thermistor problem and similar systems have been well studied in the literature. For the analogous parabolic-elliptic system where the second equation of \eqref{sistema_thermistor} is elliptic, existence, regularity, uniqueness and blow-up has been shown by Antontsev and Chipot \cite{Chipot-94} and Cimatti \cite{Cimatti-92, Cimatti-89}, who also studied the optimal control problem in \cite{Cimatti-07}.
The first result of null controllability at time $T$ of the parabolic-elliptic thermistor model was recently proved in \cite{Thermistor_HNLP-23}. 

For parabolic-parabolic systems, existence of weak solutions was proved by Rodrigues \cite{Rodrigues-92}. Afterwards, Lee and Shilkin \cite{Lee-Shilkin-05} studied the optimal control problem associated to a system simpler than \eqref{sistema_thermistor} where there is no quasi-linearity in the first equation. 
For our parabolic-parabolic system \eqref{sistema_thermistor}, as far as we know, our work is the first to address the null controllability at time $T$.
In \cite{FCS-_Guerrero_Puel-14} and \cite{EFC-Limaco-Menezes-PE-13}, the null controllability of a parabolic-elliptic system is achieved as the limit of the null controllability of parabolic systems, where the second equation of \eqref{sistema_thermistor} contains a term $\epsilon z_t$ for a small parameter $\epsilon>0$. In \cite{Thermistor_HNLP-23}, the null controllability of this parabolic system is posed as an open problem, which we have addressed in this work.

This paper is organized as follows. In Section \ref{sec2:well_posedness} we prove the well-posedness of our system \eqref{sistema2_thermistor} and give energy estimates. In Section \ref{sec3:stability} we prove the stability in $H^3$-norms of the system \eqref{sistema2_thermistor}. In Section \ref{sec4:controllability}  we calculate an appropriate Carleman estimate that allows us to prove the controllability of the linearized system of \eqref{sistema2_thermistor}. Furthermore, we prove a controllability result of the nonlinear system \eqref{sistema2_thermistor} using Liusternik's techniques. 
Finally in Section \ref{sec:final_remarks} we give a large-time controllability result that is proved as a consequence of the previous stability and controllability results and we discuss extensions and open problems.


\section{Existence of solution of the system \eqref{sistema2_thermistor} }
\label{sec2:well_posedness}

In this section we prove Theorem \ref{Theo1}. More precisely: with the hypotheses of Theorem \ref{Theo1}, for any $0< T \leq \infty$ there exists $r=r(\kappa_0, \kappa_1, \sigma_0, \sigma_1, M, \Omega, N)>0$ (independent of $T$), such that, if
$$ ||y_{0}||_{H^{3}(\Omega)} + ||p_{0}||_{H^{3}(\Omega)} + ||z^*||_{H^{1}(0,\infty;H^{3}(\Omega))} + ||f||_{\mathcal{X}}+||g||_{\mathcal{X}}\leq r,$$
the system \eqref{sistema2_thermistor} has a unique strong solution $(y,p)$  satisfying
\begin{equation}\label{A2}
    \begin{array}{l}
        ||y||_{H^{1}(0,T;H^{2}(\Omega))} + ||y_{tt}||_{L^{2}(Q_{T})} + ||p||_{H^{1}(0,T;H^{2}(\Omega))} + ||p_{tt}||_{L^{2}(Q_{T})}\\
        \noalign{\smallskip}\phantom{DDDDA}
        \displaystyle \leq C_0 C\left( ||y_{0}||_{H^{3}(\Omega)}, ||p_{0}||_{H^{3}(\Omega)} ,||z^*||_{H^{1}(0,\infty;H^{3}(\Omega))}, ||f||_{\mathcal{X}}, ||g||_{\mathcal{X}}\right),
    \end{array}
\end{equation}
where $C_0$ is a positive constant only depending  on $\Omega, \sigma_0,\, \kappa_0,\, \sigma_{1}$, $\kappa_1$ and $M$.

In fact, we employ the Faedo-Galerkin method with  the Hilbert basis of $H^{1}_{0}(\Omega)$ given by the eigenvectors $(w_{j})$ of the spectral problem $\left((w_{j},v)\right)=\lambda_{j} (w_{j},v),$ for all $v\in V=H^{2}(\Omega)\cap H^{1}_{0}(\Omega)$ and $j=1,2,3,\dots$ 

We denote by $V_{m}$ the subspace of $V$ generated by the vectors $\{w_1,w_2,...,w_m\}$ and we propose the following approximate problem:
\begin{equation}\label{A3}
\left\{
\begin{array}{l}
\left(y'_{m},v\right)+\left(\kappa( y_{m})\nabla y_{m},\nabla v\right) - \Big(\sigma(y_{m})(|\nabla p_{m}|^{2}+|\nabla z^*|^{2} + 2\nabla z^* \cdot\nabla p_m ), v \Big) \\
    \noalign{\smallskip} 
    \phantom{AAAAAAAAAAAAAAAAAAAAAAAAAAA}= \left(f,v\right),\,\, \forall \, v\in V_{m}, \vspace{0.1cm}\\
	\noalign{\smallskip} 
	(p_m',w)+\left(\sigma(y_{m})\nabla p_m,\nabla w\right)+\left(\sigma(y_m)\nabla z^*, \nabla w\right)=\left(g,w\right),\,\, \forall\, w\in V_{m}, \vspace{0.1cm}\\
		\noalign{\smallskip} 
y_{m}(0)=y_{0m} \to y_{0}, \ \ p_m(0) = p_{0m} \to p_0  \ \ \ \mbox{in}\,\, H^{3}(\Omega)\cap H^{1}_{0}(\Omega).
\end{array}
\right.
\end{equation}

The existence and uniqueness of the solution to \eqref{A3} in $(0,T_m)$ are ensured by classical ODE theory. The following estimates show that, in fact, they are defined for all $t\in (0, T)$.

In the sequel, the symbol $\tilde{C}_{i}$ is a constant that depends only on $\sigma_0,\,\kappa_0, \sigma_1,\, \kappa_1,\, M$ and $\Omega$ for $i \in \mathbb{N}$ with $1 \leq i \leq 18$. Moreover, here and in the next sections, the norm $\| \cdot \|$ will denote the $L^2$-norm $\| \cdot \|_{L^2(\Omega)}$.

{\bf Estimate I:} Taking $v=y_{m}(t)$ and $w=p_m(t)$ in \eqref{A3}, we deduce that
\begin{equation}\label{A-eq1}
\begin{array}{l}
\displaystyle \frac{1}{2}\frac{d}{dt}||y_m(t)||^{2}+\frac{\kappa_0}{2}|| \nabla y_m(t)||^{2}\leq \displaystyle  \frac{\kappa_{0}}{4}||\nabla y_{m}(t)||^{2} + \frac{\kappa_{0}}{32}||\Delta y_{m}(t)||^{2} + \frac{\sigma_{0}}{4}||\nabla p_{m}(t)||^{2}\\
\noalign{\smallskip}\phantom{SD\frac{1}{2}\frac{d}{dt}||y_m(t)||^{2}+\frac{\kappa_0}{2}|| \nabla y_m(t)||^{2}}

+\tilde{C}_{1}\left(  ||\Delta y_{m}(t)||^{4} + ||z^*(t)||^{4}_{H^{1}(\Omega)} \right) ||\nabla p_{m}(t)||^{2} \\
\noalign{\smallskip}\phantom{SD\frac{1}{2}\frac{d}{dt}||y_m(t)||^{2}+\frac{\kappa_0}{2}|| \nabla y_m(t)||^{2}}

+\tilde{C}_{1}\ ||z^*(t)||^{4}_{H^{1}(\Omega)} ||\Delta y_{m}(t)||^{2}\\
\noalign{\smallskip}\phantom{SD\frac{1}{2}\frac{d}{dt}||y_m(t)||^{2}+\frac{\kappa_0}{2}|| \nabla y_m(t)||^{2}}

+ \tilde{C}_1 \left( ||f(t)||^{2} + ||  z^*(t)||_{H^1(\Omega)}^2\right),
\end{array}
\end{equation}
\begin{equation}\label{A-eq2}
    \displaystyle \frac{1}{2}\frac{d}{dt}||p_m(t)||^{2} + \frac{\sigma_0}{2}||\nabla p_{m}(t)||^{2}\leq \tilde{C}_{2}\left(||g(t)||^{2} + ||z^*(t)||^{2}_{H^{1}(\Omega)}\right).
\end{equation}

From \eqref{A-eq1} and \eqref{A-eq2}, one has
\begin{equation}\label{A-eq2-2}
\begin{array}{l}
\displaystyle \frac{1}{2}\frac{d}{dt} \Big(||y_{m}(t)||^{2} + ||p_{m}(t)||^{2} \Big) + \frac{\kappa_{0}}{4}||\nabla y_{m}(t)||^{2}\\

\noalign{\smallskip}\phantom{SSSSSSS}

+ \displaystyle \left[\frac{\sigma_{0}}{4}-\tilde{C}_{1}\left(||\Delta y_{m}(t)||^{4}+||z^*(t)||^{4}_{H^{1}(\Omega)}\right)\right]||\nabla p_{m}(t)||^{2}\\

\noalign{\smallskip}\phantom{SSSSSSS}

\displaystyle \leq  \frac{\kappa_{0}}{32}||\Delta y_{m}(t)||^{2} + \tilde{C}_1 || z^*(t) ||_{H^1(\Omega)}^4 || \Delta y_m(t) ||^2\\

\phantom{SSSSSSSS}+ \tilde{C}_{1} \left(||f(t)||^{2} + ||z^*(t)||^{2}_{H^{1}(\Omega)} \right) + \tilde{C}_{2}\left(||g(t)||^{2} + ||z^*(t)||^{2}_{H^{1}(\Omega)}\right).

\end{array}
\end{equation}

{\bf Estimate II:} Taking $v=- \Delta y_{m}(t)$ and $w=-\Delta p_{m}(t)$ in \eqref{A3}, we see that
\begin{equation}\label{A-eq4}
    \begin{array}{l}
        \displaystyle \frac{1}{2}\frac{d}{dt}||\nabla y_{m}(t)||^{2}+\left[\frac{\kappa_{0}}{4}-\tilde{C}_{3} \Big(||\Delta y_{m}(t)||^4 + ||\Delta p_{m}(t)||^4 \Big) \right]||\Delta y_{m}(t)||^{2}\\
        \noalign{\smallskip}\phantom{\frac{\sigma_0}{2}||\Delta p_m(t)||^{2}T}

        \displaystyle \leq \frac{\sigma_{0}}{32}||\Delta p_{m}(t)||^{2} + \frac{\kappa_{0}}{32}||\Delta y_{m}(t)||^{2}\\ 
        \noalign{\smallskip}\phantom{\frac{\sigma_0}{2}||\Delta p_m(t)||^{2}TA}
        
        + \tilde{C}_{3}\ ||z^*(t)||^{4}_{H^{2}(\Omega)}||\Delta p_{m}(t)||^{2}\\

        \noalign{\smallskip} \phantom{\frac{\sigma_0}{2}||\Delta p_m(t)||^{2}TA}
        \displaystyle + \tilde{C}_{3}\left(||z^*(t)||^{4}_{H^{2}(\Omega)}+||f(t)||^{2} \right).
    \end{array}
\end{equation}
\begin{equation}\label{A-eq3}
    \begin{array}{l}
        \displaystyle \frac{1}{2}\frac{d}{dt}||\nabla p_{m}(t)||^{2} + \left(\frac{\sigma_{0}}{4} - \tilde{C}_{4}||\Delta y_{m}(t)||^4 \right)||\Delta p_{m}(t)||^{2}\\
        \noalign{\smallskip}\phantom{\frac{\sigma_0}{2}||\Delta \varphi_m||^{2}T}

        \displaystyle \leq \frac{\kappa_0}{32}||\Delta y_m(t)||^{2} + \frac{\sigma_0}{32}||\Delta p_m(t)||^{2} \\
        \noalign{\smallskip}\phantom{\frac{\sigma_0}{2}||\Delta \varphi_m||^{2}T}
        
        + \tilde{C}_{4}\ ||z^*(t)||^{4}_{H^{2}(\Omega)} ||\Delta y_{m}(t)||^{2} \\
        \noalign{\smallskip}\phantom{\frac{\sigma_0}{2}||\Delta \varphi_m||^{2}T}
        
        + \tilde{C}_{4}\left(||g(t)||^{2} + ||z^*(t)||^{2}_{H^{2}(\Omega)}\right)
    \end{array}
\end{equation}

{\bf Estimate III:} Taking $v=- \Delta y'_m(t)$ and $w=- \Delta p'_m(t)$ in \eqref{A3}, we have
\begin{equation}\label{A-eq5}
    \begin{array}{l}
        \displaystyle \frac{1}{2}\frac{d}{dt}\left(\int_{\Omega}\kappa( y_m)|\Delta y_m (x,t)|^{2}\, dx \right)+\frac{1}{2}||\nabla y'_{m}(t)||^{2}\\
	   \noalign{\smallskip} \phantom{\frac{1}{2}||\nabla y'_{m}||DDD}
	
        \displaystyle \leq \frac{\kappa_{0}}{32}||\Delta y_{m}(t)||^{2}  + \frac{\kappa_{0}}{32}||\Delta y'_{m}(t)||^{2}  + \frac{\sigma_{0}}{32}||\Delta p_{m}(t)||^{2}\\
	   \noalign{\smallskip} \phantom{\frac{1}{2}||\nabla y'_{m}||DDDD}

        \displaystyle +\tilde{C}_{5}\ ||\Delta y_m(t)||^{4} ||\Delta y_m(t)||^{2}\\
	   \noalign{\smallskip} \phantom{\frac{1}{2}||\nabla y'_{m}||DDDD}

        \displaystyle+ \tilde{C}_{5}\left( ||z^*(t)||^{4}_{H^{2}(\Omega)} + ||\Delta p_m(t)||^{4}\right)||\Delta y'_m(t)||^{2}\\
        \noalign{\smallskip} \phantom{\frac{1}{2}||\nabla y'_{m}||DDDD}

        +\tilde{C}_{5}\left(||f(t)||^{2}+||z^*(t)||^{2}_{H^{2}(\Omega)}\right).
    \end{array} 
\end{equation}
\begin{equation}\label{A-eq5_0}
\begin{array}{l}
\displaystyle \frac{1}{2}\frac{d}{dt}\left(\int_{\Omega}\sigma( p_m)|\Delta p_m (x,t)|^{2}\, dx \right) + \frac{1}{2}||\nabla p'_{m}(t)||^{2}\\
	\noalign{\smallskip} \phantom{\frac{1}{2}||\nabla p'_{m}||DDD}
	
\displaystyle \leq \frac{\kappa_{0}}{32}||\Delta y'_{m}(t)||^{2}  + \frac{\sigma_{0}}{32}||\Delta p_{m}(t)||^{2} + \frac{\sigma_{0}}{32}||\Delta p'_{m}(t)||^{2} \\
	\noalign{\smallskip} \phantom{\frac{1}{2}||\nabla y'_{m}||DDDD}

\displaystyle+ \tilde{C}_{6}\ ||\Delta y_m(t)||^{4} ||\Delta p'_m(t)||^{2}\\
	\noalign{\smallskip} \phantom{\frac{1}{2}||\nabla y'_{m}||DDDD}
    
\displaystyle +\tilde{C}_{6}\left(||g(t)||^{2}+||z^*(t)||^{2}_{H^{2}(\Omega)}\right).

\end{array} 
\end{equation}

{\bf Estimate IV:} Taking derivative with respect to $t$ in equations $\eqref{A3}_{1}$-$\eqref{A3}_{2}$ and then putting $v=-\Delta y'_{m}(t)$, $w=-\Delta p'_m$,  $v=y''_{m}$ and  $w=p''_{m}$, we have
\begin{equation}\label{A-eq6}
    \begin{array}{l}
        \displaystyle \frac{1}{2}\frac{d}{dt}||\nabla y'_m(t)||^{2}+\left[\frac{\kappa_0}{4} -\tilde{C}_{7}\left(||\Delta y_{m}(t)||^{4} + ||\Delta p_{m}(t)||^{4} + ||z^*(t)||^{4}_{H^{2}(\Omega)}\right)\right]|| \Delta y'_m(t)||^{2}\\
	  \noalign{\smallskip} \phantom{\frac{1}{2}||\nabla y'_{m}(t)||AA}
	
        \displaystyle \leq \frac{\kappa_{0}}{32}||\Delta y'_{m}(t)||^{2} + \frac{\sigma_{0}}{32}||\Delta p'_{m}(t)||^{2} \\
        \noalign{\smallskip} \phantom{\frac{1}{2}||\nabla y'_{m}(t)||AA}
        
        + \tilde{C}_{7}\Big(||z^*_t(t)||_{H^2(\Omega)}^{2}+||f_t(t)||^{2}\Big),
    \end{array}
\end{equation}
\begin{equation}\label{A-eq7}
    \begin{array}{l}
        \displaystyle \frac{1}{2}\frac{d}{dt}||\nabla p'_m(t)||^{2} +\left(\frac{\sigma_0}{4}-\tilde{C}_{8}||\Delta y_m(t)||^{4}\right)||\Delta p'_m(t)||^{2}\\
	   \noalign{\smallskip} \phantom{\frac{1}{2}||\nabla y'_{m}(t)||AA}
	
         \displaystyle \leq \frac{\kappa_{0}}{32}||\Delta y'_{m}(t)||^{2} + \frac{\sigma_{0}}{32}||\Delta p'_{m}(t)||^{2} \\
        \noalign{\smallskip} \phantom{\frac{1}{2}||\nabla y'_{m}(t)||AA}
        
        \displaystyle + \tilde{C}_{8}\left(||\Delta p_{m}(t)||^{4} + ||z^*(t)||^{4}_{H^{2}(\Omega)}\right)||\Delta y'_{m}(t)||^{2}\\
	   \noalign{\smallskip} \phantom{\frac{1}{2}||\nabla y'_{m}(t)||AAA}
 
        \displaystyle + \tilde{C}_{8}\left(||z^*_t(t)||^{2}_{H^{2}(\Omega)}+||g_t(t)||^{2}\right).
    \end{array}
\end{equation}
Also,
\begin{equation}\label{A-eq7-2}
    \begin{array}{l}
        \displaystyle \frac{1}{2}||y''_{m}(t)||^{2} +  \frac{1}{2}\frac{d}{dt} \left( \int_\Omega \kappa(y_m) |\nabla y'_m(x,t)|^2 dx \right)  \leq \frac{\kappa_{0}}{32}||\Delta y'_{m}(t)||^{2} \\
        \noalign{\smallskip} \phantom{\frac{1}{2}||\nabla y'_{m}(t)||S}
        
        + \displaystyle \frac{\sigma_{0}}{32}||\Delta p_{m}(t)||^{2} + \frac{\sigma_{0}}{32}||\Delta p'_{m}(t)||^{2}\\
        \noalign{\smallskip} \phantom{\frac{1}{2}||\nabla y'_{m}(t)||S}

        + \tilde{C}_9\ || z^*_t(t) ||_{H^2(\Omega)}^4 || \Delta p_m(t) ||^2\\ 
        \noalign{\smallskip} \phantom{\frac{1}{2}||\nabla y'_{m}(t)||S}	

        + \tilde{C}_{9} \left( ||\Delta p_{m}(t)||^{4} + || z^*(t) ||_{H^2(\Omega)}^4  \right) ||\Delta p'_{m}(t)||^{2} \\
        \noalign{\smallskip} \phantom{\frac{1}{2}||\nabla y'_{m}(t)||S}	

        \displaystyle +\tilde{C}_{9}\left(||\nabla y'_{m}(t)||^4 + ||\Delta y_{m}(t)||^{4} + ||\Delta p_{m}(t)||^{4} + ||z^*(t)||^{4}_{H^{2}(\Omega)}\right)||\Delta y'_{m}(t)||^{2}\\

        \noalign{\smallskip} \phantom{\frac{1}{2}||\nabla y'_{m}(t)||S}	

        \displaystyle +\tilde{C}_{9}\left(||f_t(t)||^{2}+||z^*(t)||^{2}_{H^{2}(\Omega)}||z^*_t(t)||^{2}_{H^{2}(\Omega)}\right)
    \end{array}
\end{equation}
\begin{equation}\label{A-eq7-3}
    \begin{array}{l}
        \displaystyle  \frac{1}{2} || p''_m(t) ||^2 +   \frac{1}{2}\frac{d}{dt}\left(\int_{\Omega}\sigma(y_{m}) |\nabla p'_{m}(x,t)|^{2}\,dx\right) \leq \frac{\sigma_{0}}{32}||\Delta p'_{m}(t)||^{2} \\
        \noalign{\smallskip} \phantom{A}	

        \displaystyle + \frac{\kappa_{0}}{32}||\Delta y_{m}(t)||^{2} + \frac{\kappa_{0}}{32}||\Delta y'_{m}(t)||^{2} \\
        \noalign{\smallskip} \phantom{A}	

        \displaystyle +\tilde{C}_{10}\left( || \Delta y_{m}(t)||^{4} + ||\Delta p_{m}(t)||^{4} + ||\nabla p'_{m}(t)||^4 \right.\\
        \noalign{\smallskip} \phantom{AAAA}

        \left.+ ||z^*(t)||^{4}_{H^{2}(\Omega)} +  ||z^*_t(t)||^{4}_{H^{2}(\Omega)}   \right)|| \Delta y'_{m}(t)||^{2}\\
        \noalign{\smallskip} \phantom{A}	

        \displaystyle + \tilde{C}_{10} \left( ||z^*_t(t)||^{2}_{H^{2}(\Omega)}  + ||g_t(t)||^{2} \right).
    \end{array}
\end{equation}
From all these estimates, we have
\begin{equation}\label{A-eq8}
    \begin{array}{l}
        \displaystyle \frac{1}{2}\frac{d}{dt}\Big\{||y_m(t)||^{2}  + ||\nabla y_m(t)||^{2}  + ||\nabla y'_m(t)||^{2} \\ 
        \noalign{\smallskip}\phantom{QQ}

         + ||p_m(t)||^{2} + ||\nabla p_m(t)||^{2} + ||\nabla p'_m(t)||^{2}  \\ 
        \noalign{\smallskip}\phantom{QQ}

        + \displaystyle  \int_{\Omega} \kappa(y_m)|\Delta y_m(x,t)|^{2}\,dx + \int_{\Omega} \kappa(y_{m}) |\nabla y'_{m}(x,t)|^{2}\,dx \\
        \noalign{\smallskip}\phantom{QQ}

        + \displaystyle \left. \int_{\Omega} \sigma(y_m)|\Delta p_m(x,t)|^{2}\,dx + \int_{\Omega}\sigma(y_{m}) |\nabla p'_{m}(x,t)|^{2}\,dx \right\}\\
        \noalign{\smallskip}\phantom{QQ}

        \displaystyle +\frac{\kappa_{0}}{4}||\nabla y_m(t)||^{2}  + \frac{1}{2} ||y''_{m}(t)||^{2}  + \frac{1}{2}||\nabla y'_{m}(t)||^{2} +\frac{1}{2} ||p''_{m}(t)||^{2}+ \frac{1}{2}||\nabla p'_{m}(t)||^{2}\\
        \noalign{\smallskip}\phantom{QQ}
        
        +\displaystyle \left[\frac{\sigma_{0}}{8}-\tilde{C}_{1}\left(||\Delta y_{m}(t)||^{4}+||z^*(t)||^{4}_{H^{2}(\Omega)} \right) \right]||\nabla p_{m}(t)||^{2}\\
        \noalign{\smallskip}\phantom{QQ}

        \displaystyle +\left[\frac{\kappa_0}{8}-\tilde{C}_{11}\left(||\Delta y_m(t)||^{4} + ||\Delta p_m(t)||^{4} + ||z^*(t)||^{4}_{H^{2}(\Omega)} \right)\right] ||\Delta y_m(t)||^{2}\\
        \noalign{\smallskip}\phantom{QQ}

        +\displaystyle \left[\frac{\sigma_{0}}{8}-\tilde{C}_{12}\left(||\Delta y_{m}(t)||^{4} + ||z^*(t)||^{4}_{H^{2}(\Omega)}  + ||z_t^*(t)||^{4}_{H^{2}(\Omega)} \right) \right] ||\Delta p_{m}(t)||^{2}\\
        \noalign{\smallskip}\phantom{QQ}

        \displaystyle +\left[\frac{\kappa_0}{8}-\tilde{C}_{13} \left(|| \Delta y_m(t)||^{4} + ||\nabla y'_{m}(t)||^4 + ||\Delta p_{m}(t)||^{4} + ||\nabla p'_{m}(t)||^4 \right. \right.\\
        \noalign{\smallskip}\phantom{QQQQQQQQQ}
        
        \displaystyle \left. \left. + ||z^*(t)||^{4}_{H^{2}(\Omega)} + ||z^*_t(t)||^{4} \right) \right] ||\Delta y'_m(t)||^{2}\\
        \noalign{\smallskip}\phantom{QQ}

        \displaystyle +\left[\frac{\sigma_0}{8}-\tilde{C}_{14}\left(|| \Delta y_m(t)||^{4} + ||\Delta p_{m}(t)||^{4} + ||z^*(t)||^{4}_{H^{2}(\Omega)} \right)\right] ||\Delta p'_m(t)||^{2}\\
        \noalign{\smallskip}\phantom{QDD}

        \displaystyle \leq \tilde{C}_{15} \Big( ||f(t)||^{2} + ||f_t(t)||^{2}+||g(t)||^{2} + ||g_t(t)||^{2} \\
        \noalign{\smallskip}\phantom{QDDTdddD}

        \displaystyle\left. + ||z^*(t)||_{H^2(\Omega)}^{2} + ||z^*_t(t)||^{2}_{H^2(\Omega)} + ||z^*(t)||^{4}_{H^2(\Omega)} + ||z^*_t(t)||^{4}_{H^{2}(\Omega)} \right),
    \end{array}
\end{equation}
where  $\displaystyle \tilde{C}_{11}=\tilde{C}_1 + \tilde{C}_{3}+\tilde{C}_{4}+\tilde{C}_{5}$,\   $\displaystyle \tilde{C}_{12}=\tilde{C}_{3}+\tilde{C}_{4}+\tilde{C}_{9}$, \  $\displaystyle \tilde{C}_{13}=\tilde{C}_{5} + \tilde{C}_{7} + \tilde{C}_{8} + \tilde{C}_{9} +\tilde{C}_{10}$, \ \ $\displaystyle \tilde{C}_{14}=\tilde{C}_{6}+\tilde{C}_{8}+\tilde{C}_{9}$\ and   \ $\displaystyle \tilde{C}_{15}=\sum_{i=1}^{10}\tilde{C}_{i}$.

Also, from \eqref{A3} taking $v=-\Delta y'_m$ and $w = -\Delta p'_m$. We have
\begin{align*}
    ||\nabla y'_m(0)||^{2} &\leq \tilde{C}_{16} \left(||y_0||^{2}_{H^{3}(\Omega)} +||y_0||^{4}_{H^{2}(\Omega)} + ||y_0||^{6}_{H^{2}(\Omega)} + ||p_0||^{4}_{H^{3}(\Omega)} + ||p_0||^{8}_{H^{1}_0(\Omega)} \right.\\
                            &\ \ \ \ \ \ \ \ \ + \left.  ||z^*||^{4}_{L^{\infty}(0,\infty;H^{3}(\Omega))} + ||z^*||^{8}_{L^{\infty}(0,\infty;H^{1}_0(\Omega))}  +||f(0)||^{2}_{H^{1}_{0}(\Omega)}\right)
\end{align*}
\begin{align*}
    ||\nabla p_{m}'(0)||^{2} &\leq \tilde{C}_{17} \left(||y_0||^{4}_{H^{3}(\Omega)} + ||y_0||^{8}_{H^{2}(\Omega)} + ||p_0||^{2}_{H^{3}(\Omega)} + ||p_0||^{4}_{H^{2}(\Omega)} \right.\\
                            &\ \ \ \ \ \ \ \ \ +  \left.  ||z^*||^{2}_{L^{\infty}(0,\infty;H^{3}(\Omega))}  + ||z^*||^{4}_{L^{\infty}(0,\infty;H^{2}(\Omega))} +||g(0)||^{2}_{H^{1}_{0}(\Omega)}\right).
\end{align*}

There exists $r = r(\kappa_0, \kappa_1, \sigma_0, \sigma_1, M, \Omega, N)>0$ such that for 
$$||y_{0}||_{H^{3}(\Omega)} + ||p_{0}||_{H^{3}(\Omega)} + ||f||_{\mathcal{X}} + ||g||_{\mathcal{X}} + ||z^*||_{H^{1}(0,\infty;H^{3}(\Omega))}< r,$$
we have
\begin{equation}\label{A-eq9}
\begin{array}{l}
\displaystyle L_{0}+ L_1 < \frac{\gamma_0}{2},
\end{array}
\end{equation}
where 
$$
\left\{\begin{array}{l}
\displaystyle \gamma_0=\min \left\{\frac{\kappa_0}{8},\frac{\sigma_0}{8}\right\}, \vspace{0.1cm}\\

\displaystyle L_{0} = \left( \tilde{C}_{1} + \tilde{C}_{11} + \tilde{C}_{12} + \tilde{C}_{13} + \tilde{C}_{14} \right) ||\Delta y_{0}||^{4} + \left( \tilde{C}_{11} + \tilde{C}_{13} + \tilde{C}_{14} \right) ||\Delta p_{0}||^{4},\vspace{.1cm}\\

\displaystyle L_{1} = \left(\tilde{C}_{1} + \tilde{C}_{11} + \tilde{C}_{13} + \tilde{C}_{14} \right) ||z^*||^{4}_{L^{\infty}(0,\infty;H^{2}(\Omega))} + \left(\tilde{C}_{12} + \tilde{C}_{13}\right) ||z^*_t||^{4}_{L^{\infty}(0,\infty;H^{2}(\Omega))},
\end{array}\right.
$$
and 
\begin{equation}\label{A-eq10}
\left\{\begin{array}{l}

\tilde{C}_{15} \left( ||f||^{2}_{H^{1}(0,\infty;L^{2}(\Omega))} + ||g||^{2}_{H^{1}(0,\infty;L^{2}(\Omega))} + || z^*||^{2}_{H^{1}(0,\infty;H^{2}(\Omega))} + ||z^*||^{4}_{H^{1}(0,\infty;H^{2}(\Omega))} \right)\\
\noalign{\smallskip}

+\displaystyle\frac{(\kappa_1 + 1)}{2}\ \tilde{C}_{16} \left(||y_0||^{2}_{H^{3}(\Omega)} +||y_0||^{4}_{H^{2}(\Omega)} + ||y_0||^{6}_{H^{2}(\Omega)} + ||p_0||^{4}_{H^{3}(\Omega)} + ||p_0||^{8}_{H^{1}_0(\Omega)} \right.\\
 \noalign{\smallskip}                          
                            
\ \ \ \ \ \ \ \ \ \ \ \ \ \ \ \ \ \ + \left.  ||z^*||^{4}_{L^{\infty}(0,\infty;H^{3}(\Omega))} + ||z^*||^{8}_{L^{\infty}(0,\infty;H^{1}_0(\Omega))}  +||f(0)||^{2}_{H^{1}_{0}(\Omega)}\right)\\
\noalign{\smallskip}

+\displaystyle\frac{(\sigma_1+1 )}{2}\ \tilde{C}_{17} \left(||y_0||^{4}_{H^{3}(\Omega)} + ||y_0||^{8}_{H^{2}(\Omega)} + ||p_0||^{2}_{H^{3}(\Omega)} + ||p_0||^{4}_{H^{2}(\Omega)} \right.\\
\noalign{\smallskip}

\ \ \ \ \ \ \ \ \ \ \ \ \ \ \ \ \ \ +  \left.  ||z^*||^{2}_{L^{\infty}(0,\infty;H^{3}(\Omega))}  + ||z^*||^{4}_{L^{\infty}(0,\infty;H^{2}(\Omega))} +||g(0)||^{2}_{H^{1}_{0}(\Omega)}\right)\\
\noalign{\smallskip}

\displaystyle+ \frac{1}{2}\left(||y_0||^{2}+||\nabla y_0||^{2}+\kappa_{1}||\Delta y_0||^{2} + ||p_0||^{2}+||\nabla p_0||^{2}+\kappa_{1}||\Delta p_0||^{2} \right)\\

\noalign{\smallskip}\phantom{T}
\displaystyle 
< \left(2 \gamma_0 + \frac{1}{4} \right) r^2.
\end{array}\right.
\end{equation}

Using these inequalities, we can prove by a contradiction argument that, for every $t\in (0,T_m)$,  the following estimate holds:
$$
\left(\tilde{C}_1 +\tilde{C}_{11}+\tilde{C}_{12}+\tilde{C}_{13}+\tilde{C}_{14}\right) ||\Delta y_{m}(t)||^{4} + \left(\tilde{C}_{11}+\tilde{C}_{13}+\tilde{C}_{14}\right) ||\Delta p_{m}(t)||^{4}< \frac{\gamma_0}{2}.
$$
Taking $t\in (0,T_m)$, we can integrate in time variable from $0$ to $t$:
\begin{equation}\label{A-eq11}
\begin{array}{l}
\displaystyle \frac{1}{2} \Big(||y_m(t)||^{2}+||\nabla y_m(t)||^{2} + (1+ \kappa_0)||\nabla y'_{m}(t)||^{2} + \kappa_{0}|| \Delta y_m(t)||^{2} \\
\noalign{\smallskip}\phantom{Q}

+ ||p_m(t)||^{2}+||\nabla p_m(t)||^{2} + (1+\sigma_0)||\nabla p'_{m}(t)||^{2} + \sigma_{0}||\Delta p_{m}(t)||^{2}\Big)\\
\noalign{\smallskip}\phantom{QQ}

+\displaystyle \frac{\kappa_{0}}{4} \int_0^t ||\nabla y_m(s)||^{2} ds + \frac{1}{2} \int_0^t ||y''_{m}(s)||^{2}ds  + \frac{1}{2} \int_0^t ||\nabla y'_{m}(s)||^{2} ds\\ 
\noalign{\smallskip}\phantom{QQ}

+\displaystyle \frac{\kappa_{0}}{16} \int_0^t ||\Delta y_m(s)||^{2} ds + \frac{\kappa_{0}}{16} \int_0^t ||\Delta y'_m(s)||^{2} ds\\
\noalign{\smallskip}\phantom{QQ}

+\displaystyle \frac{\sigma_{0}}{16} \int_0^t ||\nabla p_m(s)||^{2} ds + \frac{1}{2} \int_0^t ||p''_{m}(s)||^{2} ds + \frac{1}{2} \int_0^t ||\nabla p'_{m}(s)||^{2} ds\\
\noalign{\smallskip}\phantom{QQ}

+\displaystyle \frac{\sigma_{0}}{16} \int_0^t ||\Delta p_m(s)||^{2} ds + \frac{\sigma_{0}}{16} \int_0^t ||\Delta p'_m(s)||^{2} ds\\
\noalign{\smallskip}\phantom{QQ}

\leq \tilde{C}_{18}\left(||y_0||^{4}_{H^{3}(\Omega)} + ||y_0||^{2}_{H^{3}(\Omega)} + ||y_0||^{8}_{H^{2}(\Omega)} + ||y_0||^{6}_{H^{2}(\Omega)}  \right.\\
\noalign{\smallskip}\phantom{QhhWWA}

+||p_0||^{4}_{H^{3}(\Omega)} + ||p_0||^{2}_{H^{3}(\Omega)} + ||p_0||^{8}_{H^{1}_0(\Omega)} +||f||^{2}_{\mathcal{X}}+||g||^{2}_{\mathcal{X}} \\
\noalign{\smallskip}\phantom{QhhWWA}

\displaystyle + ||z^*||^{4}_{L^{\infty}(0,\infty;H^{3}(\Omega))}  + ||z^*||^{8}_{L^{\infty}(0,\infty;H^{1}_0(\Omega))}  \\
\noalign{\smallskip}\phantom{QhhWWA}

\displaystyle \left.+ ||z^*||^{2}_{H^{1}(0,\infty;H^{2}(\Omega))} + ||z^*||^{4}_{H^{1}(0,\infty;H^{2}(\Omega))}\right).
\end{array}
\end{equation}

Notice that the term of the right side in \eqref{A-eq11} do not dependent on $m$. Thus, we can extend the solution $(y_m, p_m)$ to the interval $(0,T)$ and proceeding similarly we obtain the estimate \eqref{A-eq11} for $t\in (0,T).$

Now, we obtain from estimates \eqref{A-eq8} and \eqref{A-eq11} that
$$
\left\{\begin{array}{l}
(y_m), (p_m)\,\, \mbox{ are bounded in }\,\, L^{\infty}(0,T;H^{2}(\Omega))\cap L^{2}(0,T;H^{1}_{0}
(\Omega)),\\
\noalign{\smallskip} 
(y'_{m}), (p'_m)\,\, \mbox{ are bounded in }\,\, L^{\infty}(0,T;H^{1}_{0}(\Omega))\cap L^{2}(0,T;H^{2}(\Omega)),\\
\noalign{\smallskip} 
(y''_{m}), (p''_{m}) \,\, \mbox{ are bounded in }\,\, L^{2}(Q_{T}).
\end{array}\right.
$$
All these uniform bounds allow us to take limits in \eqref{A3} (at least for a subsequence) as $m\to \infty$. Indeed, the unique delicate point is the a.e convergence of $\sigma(y_m)$ and $\kappa(y_m)$. But this is a consequence of the fact that the sequence $(y_m)$ is pre-compact in $L^{2}(0,T;H^{1}(\Omega))$ and $\sigma,\,\,\kappa \in C^{2}(\mathbb{R})$.

The uniqueness of the strong solution to \eqref{sistema2_thermistor} can be proved by argument standards (see \cite{Rincon}).

Due to \eqref{A-eq8}, we have that \eqref{A2} holds.
This ends the proof.



\section{Stability of the system \eqref{sistema2_thermistor}}
\label{sec3:stability}

Using the results of Section \ref{sec2:well_posedness}, we obtain Theorem \ref{Theo2} (stability). More precisely, we prove that there exists $r > 0$ such that, if $z^*$ satisfies \eqref{Hyp2}-\eqref{Hyp3},  and
    $$
    \|y_0\|_{H^3(\Omega)} + \|p_0\|_{H^3(\Omega)} + \|z^*\|_{H^1(0,\infty; H^3(\Omega))} < r,
    $$
then the solution $(y,p)$ of system \eqref{sistema2_thermistor} with $f=g=0$ verifies that there exist positive constants $\rho$ and $\tilde{C}$  such that
    $$
    \|y(t)\|_{H^3(\Omega)}^2 + \|p(t)\|_{H^3(\Omega)}^2 \leq \tilde{C} e^{-\rho t}, 
    $$
where $\tilde{C} = \tilde{C}(\kappa_0, \kappa_1, \sigma_0, \sigma_1, M,\Omega, N, \rho, r)$.

\begin{proof} (of Theorem \ref{Theo2}) 
Taking $f=g=0$ in Theorem \ref{Theo1} we get a constant $r>0$ such that if
$$
\|y_0\|_{H^3(\Omega)} + \|p_0\|_{H^3(\Omega)} + \|z^*\|_{H^1(0,\infty; H^3(\Omega))} \leq  r, 
$$
the system \eqref{sistema2_thermistor} has a unique strong solution. 
We will show that
$$
\|y(t)\|_{H^3(\Omega)} + \|p(t)\|_{H^3(\Omega)} \leq \tilde C e^{-\rho t}, \quad \text{for} \quad \rho < \gamma, \quad t \geq 0, 
$$
where $\gamma>0$ is the decay constant appearing in \eqref{Hyp3}  and $\tilde C=\tilde C(\kappa_0,\kappa_1,\sigma_0,\sigma_1, M, \Omega, N, \rho, r)$.

Indeed, from \eqref{sistema2_thermistor} we define
$$
h := -\Delta y = \frac{1}{\kappa(y)} \left( -y_t + \kappa'(y) |\nabla y|^2 + \sigma(y) |\nabla p|^2 + 2\sigma(y) \nabla p \cdot \nabla z^* + \sigma(y) |\nabla z^*|^2 \right)$$
and
$$
k := -\Delta p = \frac{1}{\sigma(p)} \left( -p_t + \sigma'(y) \nabla y \cdot \nabla p + \sigma'(y) \nabla y \cdot \nabla z^* + \sigma(y) \Delta z^* - (z^*)_t \right).
$$

For the function $h$ we have
\begin{align*}
    \nabla h & = -\frac{\kappa'(y) \nabla y}{\kappa(y)^2} \left( -y_t + \kappa'(y) |\nabla y|^2  + \sigma(y) |\nabla p|^2 + 2\sigma(y) \nabla p \cdot \nabla z^* + \sigma(y) |\nabla z^*|^2 \right) \\
    &+ \frac{1}{\kappa(y)} \left( -\nabla y_t + \kappa''(y) \nabla y | \nabla y |^2 + 2\kappa'(y) \Delta y \nabla y + \sigma'(y) \nabla y |\nabla p|^2 + 2\sigma(y) \Delta p \nabla p  \right.\\
    &\left. +2\sigma'(y)\nabla y (\nabla p \cdot \nabla z^*) + 2\sigma(y) \Delta p \nabla z^* + 2\sigma(y) \Delta z^* \nabla p + \sigma'(y) \nabla y |\nabla z^*|^2 + 2 \sigma(y) \Delta z^* \nabla z^* \right) \\
    &= U_1 + U_2. 
\end{align*}

On the other hand, for $k$ we get
\begin{align*}
    \nabla k & =  -\frac{\sigma'(y) \nabla y}{\sigma(y)^2} \left( -z_t + \sigma'(y) \nabla y \cdot \nabla p + \sigma'(y) \nabla y \cdot \nabla z^* + \sigma(y)\Delta z^* - (z^*)_t \right) \\
    &+ \frac{1}{\sigma(y)} \left( -\nabla p_t + \sigma''(y) \nabla y ( \nabla y \cdot \nabla p) + \sigma'(y) \Delta y \nabla p + \sigma'(y) \Delta p \nabla y  \right.\\
    &\left. + \sigma''(y) \nabla y (\nabla y \cdot \nabla z^*) + \sigma'(y)\Delta y \nabla z^* + 2 \sigma'(y) \Delta z^* \nabla y + \sigma(y) \nabla(\Delta z^*)  - (\nabla z^*)_t \right) \\
    &= V_1 + V_2. 
\end{align*}

Using Holder's inequality and by the immersion $H^1(\Omega) \hookrightarrow L^6(\Omega)$, which is valid for $N \leq 3$, we have
\begin{equation}\label{eq:U_1}
\begin{split}
    \int_\Omega |U_1|^2 dx & \leq C \int_\Omega \left( |\nabla y|^2 |y_t|^2 + |\nabla y|^6 + |\nabla y|^2 |\nabla p|^4 + |\nabla y|^2 |\nabla p|^2 |\nabla z^*|^2 + |\nabla y|^2 |\nabla z^*|^4 \right) dx \\
    & \leq C \left( \| \nabla y \|_{L^4(\Omega)}^2 \| y_t \|_{L^4(\Omega)}^2 + \| \nabla y \|_{L^6(\Omega)}^6 +  \| \nabla y \|_{L^6(\Omega)}^2 \| \nabla p \|_{L^6(\Omega)}^4 \right.\\
    &\left. + \| \nabla y \|_{L^6(\Omega)}^2 \| \nabla p \|_{L^6(\Omega)}^2 \| \nabla z^* \|_{L^6(\Omega)}^2 + \| \nabla y \|_{L^6(\Omega)}^2 \| \nabla z^* \|_{L^6(\Omega)}^4 \right) \\
    & \leq C \left( \| \Delta y \|^2 \| \nabla y_t \|^2 + \| \Delta y \|^6 + \| \Delta y \|^2 \| \Delta p \|^4  + \| \Delta y\|^2 \| \Delta p \|^2 \| \Delta z^* \|^2 \right. \\
    &\left. + \| \Delta y \|^2 \| \Delta z^* \|^4  \right),
\end{split}
\end{equation}
and
\begin{equation}\label{eq:U_2}
\begin{split}
    \int_\Omega |U_2|^2 dx &\leq C \int_\Omega \left( |\nabla y_t|^2 + |\nabla y|^6 + |\Delta y|^2 |\nabla y|^2 + |\nabla y|^2 |\nabla p|^4 + |\Delta p|^2 |\nabla p|^2 \right. \\
    &\left. + |\nabla y|^2 |\nabla p|^2 |\nabla z^*|^2 + |\Delta p|^2 |\nabla z^*|^2 + |\Delta z^*|^2 |\nabla p|^2 + |\nabla y|^2 |\nabla z^*|^4 + |\Delta z^*|^2 |\nabla z^*|^2 \right) \\
    &\leq C \left( \|\nabla y_t\|_{L^2(\Omega)}^2 + \|\nabla y\|_{L^6(\Omega)}^6 + U_{2,3} 
    + \|\nabla y\|_{L^6(\Omega)}^2\|\nabla p\|_{L^6(\Omega)}^4 + U_{2,5} \right. \\
    & \left. + \|\nabla y\|_{L^6(\Omega)}^2 \|\nabla p\|_{L^6(\Omega)}^2 \|\nabla z^* \|_{L^6(\Omega)}^2 + U_{2,7}
    + \|\Delta z^* \|_{L_4(\Omega)}^2 \| \nabla p \|_{L^4(\Omega)}^2 \right. \\
    & \left. + \|\nabla y\|_{L^6(\Omega)}^2 \|\nabla z^* \|_{L^6(\Omega)}^4 + \|\Delta z^*\|_{L^4(\Omega)}^2 \|\nabla z^*\|_{L^4(\Omega)}^2 \right) \\
    & \leq C \left(  \|\nabla y_t\|^2 + \|\Delta y\|^6 
    + U_{2,3} 
    + \|\Delta y\|^2 \|\Delta p\|^4 + U_{2,5} 
    + \|\Delta y\|^2 \|\Delta p\|^2 \|\Delta z^* \|^2 + U_{2,7} \right. \\
    & \left. + \|\nabla\Delta z^* \|^2 \| \Delta p \|^2 
      + \|\Delta y\|^2 \|\Delta z^* \|^4 + \|\nabla \Delta z^*\|^2 \|\Delta z^*\|^2 \right).
\end{split}
\end{equation}    

Since we do not have estimates in $H^3$ of $y$ and $p$, in order to estimate the terms $U_{2,k}$, $k=3,5,7$, 
following \cite{Thermistor_HNLP-23}, we consider the immersion $H^1(\Omega) \hookrightarrow L^6(\Omega) \hookrightarrow L^{\frac{2p}{2-p}}(\Omega)$, for $1 \leq p \leq \frac{3}{2} < 2$ we have $\frac{2p}{2-p} \leq 6$. Then, using Hölder inequality with dual exponents $q=\frac{2}{p}$ and $q'=\frac{2}{2-p}$, we have, for $U_{2,3}$,
\begin{align*}
    \| \Delta y \nabla y  \|_{L^p(\Omega)}^p & \leq \int_\Omega |\Delta y|^p |\nabla y|^p \leq C \left(\int_\Omega |\Delta y|^2 \right)^{p/2} \left(\int_\Omega |\nabla y|^{\frac{2p}{2-p}} \right)^{\frac{2-p}{2}}\\
    &\leq C \|\Delta y\|^p \|\nabla y \|_{L^6(\Omega)}^p \leq C \| \Delta y\|^{2p}.
\end{align*}

Taking $p=\frac{3}{2}$ we have 
$$
\| \Delta y \nabla y  \|_{L^{3/2}(\Omega)} \leq C \|\Delta y\|^2.
$$

Analogously, for $U_{2,5}$ and $U_{2,7}$, we get
$$
\| \Delta p \nabla p  \|_{L^{3/2}(\Omega)} \leq C \|\Delta p\|^2, \quad
\| \Delta p \nabla z^*  \|_{L^{3/2}(\Omega)} \leq C \|\Delta p\|\|\Delta z^* \|.
$$

Then, as $L^2(\Omega) \hookrightarrow L^{3/2}(\Omega)$, using the estimates for $U_{2,k}$, $k=3,5,7$,
\begin{align*}
    \| U_2 \|_{L^{3/2}}^2 \leq & C \left( \|\nabla y_t\|^2 + \|\Delta y\|^6 + \|\Delta y\|^4 + \|\Delta y\|^2 \|\Delta p\|^4 + \|\Delta p\|^4 \right. \\
    & \left. + \|\Delta y\|^2 \|\Delta p\|^2 \|\Delta z^* \|^2 + \|\Delta p \|^2 \|\Delta z^* \|^2 + \|\Delta z^* \|^2 \| \Delta p \|^2 \right. \\
    & \left. + \|\Delta y\|^2 \|\Delta z^* \|^4 + \|\nabla \Delta z^*\|^2 \|\Delta z^*\|^2 \right).
\end{align*}

Thus,
\begin{align*}
    \|\nabla h\|_{L^{3/2}}^2 \leq &C (\|U_1\|_{L^{3/2}}^2 + \|U_2\|_{L^{3/2}}^2) \\
    \leq & C \left( \| \Delta y \|^2 \| \nabla y_t \|^2 + \| \Delta y \|^6 + \| \Delta y \|^2 \| \Delta p \|^4  + \| \Delta y\|^2 \| \Delta p \|^2 \| \Delta z^* \|^2 + \| \Delta y \|^2 \| \Delta z^* \|^4  \right) \\
    & + C \left( \|\nabla y_t\|^2 + \|\Delta y\|^6 + \|\Delta y\|^4 + \|\Delta y\|^2 \|\Delta p\|^4 + \|\Delta p\|^4 \right. \\
    &\left. \qquad + \|\Delta y\|^2 \|\Delta p\|^2 \|\Delta z^* \|^2 + \|\Delta p \|^2 \|\Delta z^* \|^2 + \|\Delta z^* \|^2 \| \Delta p \|^2 \right. \\
    &\left. \qquad + \|\Delta y\|^2 \|\Delta z^* \|^4 + \|\nabla \Delta z^*\|^2 \|\Delta z^*\|^2
    \right).   
\end{align*}

Since
$$
\|h\|^2 \leq C \left(\|y_t\|^2 + \|\Delta y\|^4 + \|\Delta p\|^4 + \|\Delta p\|^2 \|\Delta z^*\|^2 + \|\Delta z^*\|^4 \right),
$$
we get
\begin{align*}\label{h_W32}
    \|h\|_{W^{1,3/2}}^2 \leq & C \left( \| \Delta y \|^2 \| \nabla y_t \|^2 + \| \Delta y \|^6 + \| \Delta y \|^2 \| \Delta p \|^4  + \| \Delta y\|^2 \| \Delta p \|^2 \| \Delta z^* \|^2 + \| \Delta y \|^2 \| \Delta z^* \|^4  \right) \\
    & + \left( \|\nabla y_t\|^2 + \|\Delta y\|^6 + \|\Delta y\|^4 + \|\Delta y\|^2 \|\Delta p\|^4 + \|\Delta p\|^4 \right. \\
    &\left. + \|\Delta y\|^2 \|\Delta p\|^2 \|\Delta z^* \|^2 + \|\Delta p \|^2 \|\Delta z^* \|^2 + \|\Delta z^* \|^2 \| \Delta p \|^2 \right. \\
    &\left. + \|\Delta y\|^2 \|\Delta z^* \|^4 +  \|\nabla \Delta z^*\|^2 \|\Delta z^*\|^2 + \|\Delta z^*\|^4 \right) \\
    &\leq C Q_{y,p}.
\end{align*}

Since $-\Delta y = h$ we have $y \in W^{3,3/2}(\Omega)$ and 
$$
    \|y\|_{W^{3,3/2}(\Omega)} \leq C \| h \|_{W^{1,3/2}(\Omega)}.
$$

Using the embedding $W^{1,3/2}(\Omega) \hookrightarrow L^3(\Omega)$ and $W^{2,3/2}(\Omega) \hookrightarrow L^q(\Omega)$ for $1 \leq q < \infty$, we have
\begin{equation}\label{eq:est_L2}
\begin{split}
    \| \Delta y \nabla y \|^2 &\leq C \| \Delta y \|_{L^3(\Omega)}^2 \| \nabla y\|_{L^6}^2 \\
    &\leq C \| \Delta y \|_{W^{1,3/2}(\Omega)}^2 \| \Delta y \|^2 \\
    &\leq C \|h\|_{W^{1,3/2}(\Omega)}^2 \|\Delta y\|^2 \\
    &\leq C Q_{y,p} \|\Delta y\|^2.
\end{split}
\end{equation}

Analogously, we get
$$
    \| \Delta y \nabla p \|^2 \leq C Q_{y,p} \|\Delta p\|^2, \qquad \| \Delta y \nabla z^* \|^2 \leq C Q_{y,p} \|\Delta z^*\|^2. 
$$

On the other hand, to estimate $\|\nabla k\|^2$, we compute, analogously,   
\begin{equation}\label{eq:V_1}
\begin{split}
    \int_\Omega |V_1|^2 dx & \leq C \int_\Omega \left( |\nabla y|^2 |z_t|^2 + |\nabla y|^4 |\nabla p|^2 + |\nabla y|^4 |\nabla z^*|^2 + |\nabla y|^2 |\Delta z^*|^2 + |\nabla y|^2 |(z^*)_t|^2 \right) dx \\
    & \leq C \left( \| \nabla y \|_{L^4(\Omega)}^2 \| z_t \|_{L^4(\Omega)}^2 + \| \nabla y \|_{L^6(\Omega)}^2 \| \nabla y \|_{L^6(\Omega)}^2 \| \nabla p \|_{L^6(\Omega)}^2 \right.\\
    &\left. + \| \nabla y \|_{L^6(\Omega)}^2 \| \nabla y \|_{L^6(\Omega)}^2 \| \nabla z^* \|_{L^6(\Omega)}^2 + \| \nabla y \|_{L^4(\Omega)}^2 \| \Delta z^* \|_{L^4(\Omega)}^2 + \| \nabla y \|_{L^4(\Omega)}^2 \| (z^*)_t \|_{L^4(\Omega)}^2 \right) \\
    & \leq C \left( \| \Delta y \|^2 \| \nabla z_t \|^2 + \| \Delta y \|^4 \| \Delta p \|^2 + \| \Delta y \|^4 \| \Delta z^* \|^2 + \| \Delta y\|^2 \|\nabla (\Delta z^*) \|^2 \right. \\
    & \left. + \| \Delta y \|^2 \| \nabla (z^*)_t \|^2  \right),
\end{split}
\end{equation}
and
\begin{equation}\label{eq:V_2}
\begin{split}
    \int_\Omega |V_2|^2 dx & \leq C \int_\Omega \left( |\nabla p_t|^2 + |\nabla y|^4 |\nabla p|^2 + |\Delta y|^2 |\nabla p|^2 + |\Delta p|^2 |\nabla y|^2 + |\nabla y|^4 |\nabla z^*|^2 \right. \\
    &\left. + |\Delta y|^2 |\nabla z^*|^2 + |\Delta z^*|^2 |\nabla y|^2 + |\nabla(\Delta z^*)|^2 + |\nabla (z^*)_t|^2 \right) dx \\
    & \leq C \left( \| \nabla p_t \|_{L^2(\Omega)}^2 + \| \nabla y \|_{L^6(\Omega)}^4 \| \nabla p \|_{L^6(\Omega)}^2 + V_{2,3} + V_{2,4} 
    + \| \nabla y \|_{L^6(\Omega)}^4 \| \nabla z^* \|_{L^6(\Omega)}^2 \right. \\
    &\left. + V_{2,6} + \| \Delta z^* \|_{L^4(\Omega)}^2 \| \nabla y \|_{L^4(\Omega)}^2 + \| \nabla (\Delta z^*) \|_{L^2(\Omega)}^2 + \| \nabla (z^*)_t \|_{L^2(\Omega)}^2 \right) \\
    & \leq C \left( \| \nabla p_t \|^2 + \| \Delta y \|^4 \| \Delta p \|^2 + V_{2,3} + V_{2,4}  + \| \Delta y \|^4 \| \Delta z^* \|^2 + V_{2,6}
    \right.\\
    &\left. 
    + \| \nabla(\Delta z^*) \|^2 \| \Delta y \|^2 + \| \nabla (\Delta z^*) \|^2 + \| \nabla (z^*)_t \|^2 \right),
\end{split}
\end{equation}
where $V_{2,3} = \|\Delta y \nabla p\|^2$, $V_{2,4}= \|\Delta p \nabla y\|^2$ and  $V_{2,6}=\|\Delta y \nabla z^*\|^2$. Then, proceeding as above for $U_{2,3}$ we get
$$
    \|\Delta y \nabla p\|_{L^{3/2}} \leq C \|\Delta y\| \|\Delta p\| , \quad \|\Delta p \nabla y\|_{L^{3/2}} \leq C \|\Delta p\| \|\Delta y\| , \qquad  \|\Delta y \nabla z^*\| _{L^{3/2}} \leq C \|\Delta y\| \|\Delta z^*\|.
$$

Thus,
\begin{equation*}
    \begin{split}
        \|\nabla k\|_{L^{3/2}(\Omega)}^2 & \leq C \left( \|V_1\|_{L^{3/2}(\Omega)}^2 + \|V_{2}\|_{L^{3/2}(\Omega)}^2 \right) \\
        & \leq C \left( \| \Delta y \|^2 \| \nabla z_t \|^2 + \| \Delta y \|^4 \| \Delta p \|^2 + \| \Delta y \|^4 \| \Delta z^* \|^2 + \| \Delta y\|^2 \|\nabla (\Delta z^*) \|^2 \right. \\
        & \left. \qquad + \| \Delta y \|^2 \| \nabla (z^*)_t \|^2 \right) \\
        & + C \left(
        \| \nabla p_t \|^2 + \| \Delta y \|^4 \| \Delta p \|^2 + \|\Delta y\|^2 \| \Delta p \|^2 + \| \Delta p \|^2 \|\Delta y\|^2  
        \right.\\
        &\left. \qquad + \| \Delta y \|^4 \| \Delta z^* \|^2 + \| \Delta y \|^2 \| \Delta z^* \|^2
        + \| \nabla(\Delta z^*) \|^2 \| \Delta y \|^2 + \| \nabla (\Delta z^*) \|^2 + \| \nabla (z^*)_t \|^2
        \right) 
    \end{split}
\end{equation*}
Moreover,
$$\|k\|^2 \leq C \left( \|p_t\|^2 + \|\Delta y \|^2 \|\Delta p \|^2 + \|\Delta y \|^2 \| \Delta z^* \|^2 + \| \Delta z^* \|^2 + \| z^*_t \|^2 \right).$$ 
Therefore, 
\begin{equation*}
    \begin{split}
        \|k\|_{W^{1,3/2}(\Omega)}^2 & \leq 
        C \left( \| \Delta y \|^2 \| \nabla z_t \|^2 + \| \Delta y \|^4 \| \Delta p \|^2 + \| \Delta y \|^4 \| \Delta z^* \|^2 + \| \Delta y\|^2 \|\nabla (\Delta z^*) \|^2 \right. \\
        & \left. \qquad + \| \Delta y \|^2 \| \nabla (z^*)_t \|^2 \right) \\
        & + C \left(
        \| \nabla p_t \|^2 + \| \Delta y \|^4 \| \Delta p \|^2 + \|\Delta y\|^2 \| \Delta p \|^2 + \| \Delta p \|^2 \|\Delta y\|^2    
        \right.\\
        &\left. \qquad + \| \Delta y \|^4 \| \Delta z^* \|^2 + \| \Delta y \|^2 \| \Delta z^* \|^2
        + \| \nabla(\Delta z^*) \|^2 \| \Delta y \|^2 + \| \nabla (\Delta z^*) \|^2 + \| \nabla (z^*)_t \|^2 \right) \\
        & \leq C \hat Q_{y,p}.
    \end{split}
\end{equation*}

Thus, proceeding as in \eqref{eq:est_L2}, we get
$$
V_{2,3} \leq C Q_{y,p} \|\Delta p\|^2, \qquad V_{2,4} \leq C \hat Q_{y,p} \|\Delta y\|^2, \qquad V_{2,6} \leq C Q_{y,p} \|\Delta z^*\|^2.
$$

Therefore, using \eqref{eq:U_1}, \eqref{eq:U_2}, \eqref{eq:V_1} and \eqref{eq:V_2}, we have that
$\|h\|^2 + \|\nabla h\|^2 \leq C Q_{y,p} + U_{2,3} + U_{2,5} + U_{2,7}$ and 
$\|k\|^2 + \|\nabla k\|^2 \leq C \hat Q_{y,p} + V_{2,3} + V_{2,4} + V_{2,6}$. Thus,
\begin{equation*}
    \begin{split}
        \|h\|^2 + \|\nabla h\|^2 + \|k\|^2 + \|\nabla k\|^2 \leq & C Q_{y,p} + C Q_{y,p} \|\Delta y\|^2 + C Q_{y,p} \|\Delta p\|^2 + C Q_{y,p} \|\Delta z^*\|^2 \\
        &+ C \hat Q_{y,p} + C Q_{y,p} \|\Delta p\|^2 + C \hat Q_{y,p}\|\Delta y \|^2 + C Q_{y,p} \|\Delta z^*\|^2.   
    \end{split}
\end{equation*}

In other words, we proved that
\begin{equation}\label{eq:est_z_H3}
\begin{split}
    \| y \|_{H^3(\Omega)}^2 + \| p \|_{H^3(\Omega)}^2 &\leq C \left( \|h\|^2 + \|\nabla h\|^2 + \|k\|^2 + \|\nabla k\|^2 \right) \\
    &\leq C \left( 1 + \|\Delta z\|^2 + \|\Delta p\|^2 + \|\Delta z^*\|^2
    \right) (Q_{y,p} +  \hat Q_{y,z}).
\end{split}
\end{equation}

As in \cite{Thermistor_HNLP-23}, we show that each term of the right-hand side of \eqref{eq:est_z_H3} has an exponential decay in $t$.
If we denote
$$S(t) = \|y(t) \|^2 + \|\nabla y(t)\|^2 + \|\Delta y(t)\|^2 + \|\nabla y_t(t)\|^2 + \|p(t) \|^2 + \|\nabla p(t)\|^2 + \|\Delta p(t)\|^2 + \|\nabla p_t(t)\|^2,$$
from the estimate \eqref{A-eq8} in Theorem \ref{Theo1} with $f=g=0$ we deduce that, for some $\rho>0$,
$$
\frac{d}{dt} S(t) + \rho S(t) \leq C \left( \|z^*(t)\|_{H^2(\Omega)}^2 + \|z^*_t(t)\|_{H^2(\Omega)}^2 + \| z^*(t) \|_{H^2(\Omega)}^4 + \|z^*_t(t)\|^4_{H^2(\Omega)} \right) \leq C e^{-\gamma t}.
$$

If $\rho < \gamma$ we get
$S(t) \leq C P_0 e^{-\rho t}$, 
where $P_0 = \frac{1}{\gamma-\rho} + r^2$.

Thus, we have shown that
\begin{equation}\label{eq:decay_z_H3}
    \begin{split}
        \| z(t) \|_{H^3(\Omega)}^2 + \| p(t) \|_{H^3(\Omega)}^2 & \leq C (S(t)+S^2(t)+S^3(t)+S^4(t)) \\
        &\leq  C\left( 1 + P_0 + P_0^2 + P_0^3 + P_0^4 \right) e^{-\rho t}\\
        &= \tilde{C} e^{-\rho t},
    \end{split}
\end{equation}
where $\tilde{C} = C\left( 1 + P_0 + P_0^2 + P_0^3 + P_0^4 \right)$ \ with $C= C(\kappa_0,\kappa_1,\sigma_0,\sigma_1, M, N, \Omega, \rho, r)$.
\end{proof}


\section{Controllability of the system \eqref{sistema2_thermistor}}\label{sec4:controllability}

To prove Theorem \ref{maintheorem}, we employ a technique called Liusternik's Inverse Mapping Theorem in Banach spaces (see \cite{Alekseev}). 
\begin{teo}[Liusternik]\label{Liusternik}
Let $ \mathcal{Y}$ and $ \mathcal{Z}$ be Banach spaces and let $\mathcal{A}:B_{r}(0)\subset  \mathcal{Y}\rightarrow  \mathcal{Z}$ be a $\mathcal{C}^{1}$ mapping. Let as assume that $\mathcal{A}^{\prime}(0)$ is onto and let us set $\mathcal{A}(0)=\zeta_{0}$. Then, there exist $\delta >0$, a mapping $W: B_{\delta}(\zeta_{0})\subset  \mathcal{Z}\rightarrow  \mathcal{Y}$ and a constant $K>0$ such that
\begin{equation*}
    W(z)\in B_{r}(0),\quad \mathcal{A}(W(z))=z\quad \text{and}\quad \Vert W(z)\Vert_{ \mathcal{Y}}\leq K\Vert z-\zeta_{0}\Vert_{ \mathcal{Z}}, \quad \forall\, z\in B_{\delta}(\zeta_{0}).
\end{equation*}
Here $B_{r}(0)$ and $B_{\delta}(\zeta_{0})$ are open balls of radius $r$ and $\delta$, respectively.\\
In particular, $W$ is a local inverse-to-the-right of $\mathcal{A}$.
\end{teo}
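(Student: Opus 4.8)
The plan is to prove this as the surjective (right-inverse) form of the inverse mapping theorem, combining the open mapping theorem with a Newton-type iteration. Write $L := \mathcal{A}'(0) \in \mathcal{L}(\mathcal{Y}, \mathcal{Z})$, a bounded linear surjection between Banach spaces. The first step is to quantify this surjectivity via the open mapping theorem: $L$ is an open map, so $L(B_1(0))$ contains some ball $B_c(0) \subset \mathcal{Z}$, and a homogeneity/rescaling argument then produces a constant $C_0 = 2/c > 0$ together with a (not necessarily linear) selection $R: \mathcal{Z} \to \mathcal{Y}$ satisfying $L R(w) = w$ and $\|R(w)\| \leq C_0 \|w\|$ for all $w \in \mathcal{Z}$. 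Only this norm bound on a right inverse of $L$, and neither linearity nor continuity of $R$, will be used in what follows.

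Next I would use the $\mathcal{C}^1$ regularity to control the nonlinear remainder $r(y) := \mathcal{A}(y) - \zeta_0 - L y$. Since $\mathcal{A}'$ is continuous and $\mathcal{A}'(0) = L$, for any prescribed $\eta > 0$ there is a radius $\rho \in (0, r]$ with $\|\mathcal{A}'(y) - L\| \leq \eta$ whenever $\|y\| \leq \rho$; applying the mean value inequality along the segment joining two points of the convex ball $B_\rho(0)$ then yields $\|r(y_1) - r(y_2)\| \leq \eta \|y_1 - y_2\|$ for $y_1, y_2 \in B_\rho(0)$. I fix $\eta$ once and for all so that $\theta := C_0 \eta < 1$; this smallness, guaranteed by the $\mathcal{C}^1$ hypothesis, is exactly what turns the scheme into a contraction.

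With these two ingredients I would construct, for each target $z$ near $\zeta_0$, the iteration $y_0 = 0$ and $y_{n+1} = y_n + R(d_n)$, where $d_n := z - \mathcal{A}(y_n)$ is the running defect (so $d_0 = z - \zeta_0$). The key algebraic identity is that, because $L R(d_n) = d_n$, the linear contributions cancel and the defect is driven solely by the remainder: $d_{n+1} = -\bigl(r(y_{n+1}) - r(y_n)\bigr)$, whence $\|d_{n+1}\| \leq \eta \|y_{n+1} - y_n\| = \eta\|R(d_n)\| \leq \theta \|d_n\|$. This gives geometric decay $\|d_n\| \leq \theta^n \|z - \zeta_0\|$ and summable increments $\|y_{n+1} - y_n\| = \|R(d_n)\| \leq C_0 \theta^n \|z - \zeta_0\|$, so $(y_n)$ is Cauchy in the complete space $\mathcal{Y}$ and converges to a limit I define to be $W(z)$. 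Letting $d_n \to 0$ and using continuity of $\mathcal{A}$ gives $\mathcal{A}(W(z)) = z$, while summing the increments gives $\|W(z)\| \leq \tfrac{C_0}{1-\theta}\|z - \zeta_0\|$, which is the claimed estimate with $K := C_0/(1-\theta)$.

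Finally I would close the loop on the domains. With $\rho \le r$ chosen above, taking $\delta := \rho(1-\theta)/C_0$ ensures that whenever $\|z - \zeta_0\| < \delta$ every iterate satisfies $\|y_n\| \leq \sum_{k} \|R(d_k)\| \leq \tfrac{C_0}{1-\theta}\|z - \zeta_0\| < \rho \le r$, so the iterates never leave the ball on which the Lipschitz bound for $r$ holds, and simultaneously $\|W(z)\| < \rho \le r$ forces $W(z) \in B_r(0)$ as required. The main obstacle, and the only genuinely nontrivial point, is the first step: converting mere surjectivity of $L$ into the uniform bound $\|R(w)\| \leq C_0\|w\|$, which relies essentially on completeness of both $\mathcal{Y}$ and $\mathcal{Z}$ through the open mapping theorem; everything afterward is a standard contraction that converges precisely because the $\mathcal{C}^1$ hypothesis allows $\eta$ to be taken small relative to $1/C_0$. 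The ``inverse-to-the-right'' conclusion is then immediate, since $\mathcal{A} \circ W = \mathrm{Id}$ on $B_\delta(\zeta_0)$ by construction.
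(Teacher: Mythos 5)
Your proof is correct and complete, but note that the paper itself does not prove this theorem: its entire ``proof'' is the single line ``See \cite{Alekseev}.'', so there is no internal argument to compare against, and what you have written is essentially the classical Lyusternik--Graves argument as found in that reference, reconstructed in full. All the genuinely delicate points go through. The open mapping theorem plus your homogeneity rescaling does yield a (generally nonlinear, discontinuous) selection $R$ with $LR(w)=w$ and $\Vert R(w)\Vert_{\mathcal{Y}}\le C_0\Vert w\Vert_{\mathcal{Z}}$, and you correctly isolate that only this norm bound is used downstream. The mean value inequality on the convex ball $B_{\rho}(0)$ legitimately converts continuity of $\mathcal{A}'$ at $0$ into the Lipschitz bound $\Vert r(y_1)-r(y_2)\Vert\le\eta\Vert y_1-y_2\Vert$. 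The cancellation $d_{n+1}=-\bigl(r(y_{n+1})-r(y_n)\bigr)$ is the heart of the scheme, and your induction --- with $\delta=\rho(1-\theta)/C_0$ guaranteeing $\Vert y_n\Vert<\rho$ at every step, so the Lipschitz bound is applicable exactly when invoked --- closes correctly; the limit passage then uses only completeness of $\mathcal{Y}$ and continuity of $\mathcal{A}$, giving $\mathcal{A}(W(z))=z$ and $\Vert W(z)\Vert_{\mathcal{Y}}\le \frac{C_0}{1-\theta}\Vert z-\zeta_0\Vert_{\mathcal{Z}}$, i.e.\ the claimed estimate with $K=C_0/(1-\theta)$. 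Two cosmetic repairs: take $\rho<r$ strictly (your phrasing ``$\Vert y\Vert\le\rho$'' with $\rho=r$ allowed would leave the open domain $B_r(0)$ on which $\mathcal{A}$ is defined), and set $R(0)=0$ so the iteration is well defined if some defect $d_n$ vanishes, in which case it simply stabilizes. Compared with the paper's bare citation, your route buys a self-contained and quantitative statement: the constant $K$ and the radius $\delta$ come out explicitly in terms of the openness constant of $\mathcal{A}'(0)$ and the modulus of continuity of $\mathcal{A}'$ at the origin, which is precisely the form of the theorem the paper later applies to its control map.
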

\begin{proof}
    See \cite{Alekseev}.
\end{proof}

The argument is inspired by \cite{Fursikov_Imanuvilov-96, Imanuvilov_Yamamoto-01}.
We prove Theorem \ref{maintheorem} for system \eqref{sistema2}, taking into account that an analogous proof works also for system \eqref{sistema2_control2}.

\subsection{The linearized system of (\ref{sistema2})}
\label{subsec:linearized_system}

The linearized system of \eqref{sistema2} about zero is given by
\begin{equation}\label{sistema2_linear}
    \left\{
    \begin{array}{ll}
        y_t - \nabla \cdot (\kappa(0) \nabla y) =  2\sigma(0) \nabla p \cdot \nabla z^* + \sigma'(0)  |\nabla z^*|^2 y  + f + v 1_\omega, \quad &\text{in}\quad Q_T\\
        p_t - \nabla \cdot (\sigma(0) \nabla p) - \nabla \cdot ((\sigma'(0) \nabla z^*)y) =  g  , \quad &\text{in}\quad Q_T, \\
        y=0, \quad p=0, \quad &\text{on}\quad \Sigma_T, \\
        y(x,0) = y_0(x), \ \ p(x,0)=p_0(x) \quad &\text{in}\quad \Omega.
    \end{array}
    \right.
\end{equation}
The adjoint system for \eqref{sistema2_linear} is
\begin{equation}\label{sistema_adj}
    \left\{
    \begin{array}{ll}
        -\phi_t - \nabla \cdot (\kappa(0) \nabla \phi) - \sigma'(0)|\nabla z^*|^2 \phi +  \sigma'(0) \nabla \psi \cdot \nabla z^* = F, \quad &\text{in}\quad Q_T\\
        -\psi_t - \nabla \cdot (\sigma(0) \nabla \psi) + 2 \nabla \cdot ((\sigma(0) \nabla z^*)\phi) =  G , \quad &\text{in}\quad Q_T, \\
        \phi=0, \quad \psi=0, \quad &\text{on}\quad \Sigma_T, \\
        \phi(x,T) = \phi_T(x), \ \  \psi(x,T) = \psi_T(x)  \quad &\text{in} \quad\Omega.
    \end{array}
    \right.
\end{equation}

Following well-known ideas, for suitable functions $f$ and $g$, the (global) null controllability of \eqref{sistema2_linear}
is obtained as a consequence of Carleman estimates for \eqref{sistema_adj}.

\subsection{Carleman estimates of system (\ref{sistema_adj})}\label{sec:carleman_estimates}

We will establish a suitable Carleman estimate for our adjoint system \eqref{sistema_adj}. We need the following result:
\begin{lem}[Fursikov's function]
Let $\omega \subset  \Omega$ be a non empty open set. Then there exists a function $\eta_0 \in C^2(\overline \Omega)$ such that $\eta_0 > 0$ in $\overline{\Omega}$ and $\vert \nabla \eta_0 \vert >0$ in $\Omega \setminus \omega$. 
\end{lem}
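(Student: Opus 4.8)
The plan is to build $\eta_0$ from an arbitrary Morse function by relocating all of its critical points into $\omega$ through a diffeomorphism, and then to restore positivity by adding a constant. First I would fix a nonempty open set $\omega_0$ with $\overline{\omega_0} \subset \omega$ and choose a Morse function $g \in C^\infty(\overline\Omega)$, i.e.\ a smooth function whose critical points are all nondegenerate; such functions are dense in $C^2(\overline\Omega)$ by standard transversality arguments, and one may further arrange that $g$ has no critical points on $\partial\Omega$. Since nondegenerate critical points are isolated and $\overline\Omega$ is compact, $g$ then has only finitely many critical points $x_1,\dots,x_k$, all interior to $\Omega$.

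The decisive step is to produce a diffeomorphism $\Phi:\overline\Omega\to\overline\Omega$, isotopic to the identity and equal to the identity near $\partial\Omega$, with $\Phi(x_j)\in\omega_0$ for every $j$. Assuming $\Omega$ connected (otherwise one requires $\omega$ to meet every connected component and argues on each separately), each $x_j$ can be joined to a point of $\omega_0$ by a smooth embedded arc lying in $\Omega$; choosing these arcs disjoint, thickening them, and flowing along a compactly supported vector field tangent to them yields the required $\Phi$ via the isotopy extension theorem. This is the standard mechanism for simultaneously transporting finitely many interior points into a prescribed open set, and it is the step where the real work---and the need for connectedness---resides.

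I would then set $\tilde\eta := g\circ\Phi^{-1}$. By the chain rule $\nabla\tilde\eta(x)=\bigl(D\Phi^{-1}(x)\bigr)^{\top}\nabla g\bigl(\Phi^{-1}(x)\bigr)$, and since $D\Phi^{-1}$ is everywhere invertible, $\nabla\tilde\eta(x)=0$ holds exactly when $\Phi^{-1}(x)$ is a critical point of $g$, that is, exactly when $x\in\{\Phi(x_1),\dots,\Phi(x_k)\}\subset\omega_0\subset\omega$. Consequently $|\nabla\tilde\eta|>0$ on $\overline\Omega\setminus\omega$, and in particular on $\Omega\setminus\omega$. Because $g$ is smooth and $\Phi$ is a diffeomorphism compatible with the ($C^3$) boundary, $\tilde\eta\in C^2(\overline\Omega)$.

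Finally, since $\overline\Omega$ is compact, $\tilde\eta$ is bounded, so for $C>0$ large enough the function $\eta_0:=\tilde\eta+C$ is strictly positive on $\overline\Omega$; adding a constant does not alter the gradient, so $|\nabla\eta_0|>0$ persists on $\Omega\setminus\omega$. This gives $\eta_0\in C^2(\overline\Omega)$ with $\eta_0>0$ on $\overline\Omega$ and $|\nabla\eta_0|>0$ on $\Omega\setminus\omega$, as required. The main obstacle throughout is the construction of $\Phi$: one must ensure that moving the critical points into $\omega$ can be realized by a single global diffeomorphism of $\overline\Omega$ that respects the boundary, which is precisely what the isotopy-extension/flow argument guarantees.
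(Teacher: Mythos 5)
Your proof is correct, but it follows a genuinely different route from the paper's. The paper proves the lemma in two lines by a domain-extension trick: pick a smooth open set $\hat\Omega$ with $\overline\Omega\subset\hat\Omega$ and $\omega\subset\subset\hat\Omega$, apply Lemma 1.1 of Fursikov--Imanuvilov to the pair $(\hat\Omega,\omega)$, obtaining $\hat\eta_0\in C^2(\overline{\hat\Omega})$ with $\hat\eta_0>0$ in $\hat\Omega$, $\hat\eta_0=0$ on $\partial\hat\Omega$ and $|\nabla\hat\eta_0|>0$ in $\hat\Omega\setminus\omega$, and then restrict: $\eta_0:=\hat\eta_0\big|_{\overline\Omega}$. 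Since $\overline\Omega$ sits in the interior of $\hat\Omega$, positivity holds on all of $\overline\Omega$ including $\partial\Omega$, and the gradient condition is inherited because $\Omega\setminus\omega\subset\hat\Omega\setminus\omega$; enlarging the domain is exactly what converts the usual conclusion \emph{vanishes on the boundary} into the conclusion \emph{positive on the closure} needed here. What you do instead is reprove from scratch the Morse-theoretic construction that underlies the cited lemma: a Morse function whose finitely many, interior, nondegenerate critical points are transported into $\omega$ by a boundary-fixing diffeomorphism supplied by the isotopy extension theorem, followed by composition and addition of a large constant. This is correct, and indeed somewhat easier than the original Fursikov--Imanuvilov statement, precisely because you drop the requirement $\eta_0=0$ on $\partial\Omega$ and restore positivity trivially---the same simplification the paper achieves by restriction. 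The trade-off: the paper's argument is soft, short, and delegates all the differential topology to the literature, while yours is self-contained but must carry the real work, namely the simultaneous relocation of finitely many points by a diffeomorphism isotopic to the identity, where connectedness and the disjointness of the arcs must be justified. Two minor remarks: your connectedness caveat is not actually needed, since on any component of $\Omega$ disjoint from $\omega$ a linear function plus a large constant already has nowhere-vanishing gradient; and in dimension $N=2$ the disjointness of the arcs deserves a word (an embedded arc does not disconnect a connected open planar set, so the arcs can be chosen inductively).
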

\begin{proof}
    Let us define $\hat \Omega \subset \mathbb{R}^N$ open set such that $\Omega \subset \hat \Omega$ and $\omega \subset \subset \hat \Omega$, then apply Lemma $1.1$ from \cite{Fursikov_Imanuvilov-96} to $\hat \Omega$ and $\omega$. Thus, there exists a function $\hat \eta_0 \in C^{2}(\overline {\hat \Omega})$ such that $\hat \eta_0 > 0$ in ${\hat \Omega}$, $\hat \eta_0 = 0$ in $\partial \hat \Omega$ and $\vert \nabla \hat \eta_0 \vert >0$ in $\hat\Omega \setminus \omega$. Taking $\eta_0 = \hat{ \eta}_0 \Big|_{\Omega}$ we conclude the proof.
\end{proof}

For some parameter $\lambda >0$ and for $K \geq \| \eta_0 \|_{\infty}$, we define the weight functions
$$ \alpha (x,t) = \frac{e^{2\lambda K} - e^{\lambda \eta_0(x)}}{t(T-t)}, \ \ \ \ \ \    \xi (x,t) = \frac{e^{\lambda \eta_0(x)}}{t(T-t)}.$$
We use the notation
\begin{align}\label{eq:formula_I}
I(\tau, \varphi) :=  \iint_{Q_T}  (s\xi)^{\tau-1} e^{-2s\alpha} \Big( \vert \varphi_t\vert^2 +\vert \Delta \varphi \vert^2  + (s \lambda \xi)^2 \vert \nabla \varphi \vert^2 + (s \lambda \xi)^4 \vert \varphi \vert^2 \Big) dx dt,  
\end{align} 
where $s, \lambda > 0$ and $\tau \in \mathbb{R}$.\\
We have the following Carleman estimate, which is straightforwardly adapted from \cite{GlobalCarleman}.
\begin{prop}\label{prop2.2}
Let $\omega \subset \subset \Omega$ be a non empty open set, there exist positive constants $C_0$, $s_0$ and $k_0$ such that, for $s>s_0$, $\lambda > \lambda_0$, $H \in [W^{1,\infty}(Q_T)]^{N^2}$, $A \in [L^{\infty}(Q_T)]^N$, $a \in L^{\infty}(Q_T)$, $F \in L^2(Q_T)$ and $\varphi_T\in L^2(\Omega)$, the solution of the equation 
\begin{equation} \label{sistema4}
    \left\{
    \begin{array}{ll}
        \displaystyle -\varphi_t - \nabla \cdot ( H \nabla \varphi ) + A \cdot \nabla \varphi + a \varphi= F, \ \ &\mbox{ in } \ Q_T, \\
        \varphi(x,t) =  0 ,\ \ &\mbox{ on } \ \ \Sigma_T,\\
        \displaystyle \varphi(x,T) = \varphi_T(x), &\mbox{ in } \ \Omega.
    \end{array}
    \right.
\end{equation}
satisfy
\begin{equation}
I(\tau, \varphi)\leq C_0 \left[ \iint_{Q_T} e^{-2s\alpha} (s \xi)^{\tau} \vert F \vert^2 dx dt +  \lambda^4 \iint_{\omega \times (0,T)} e^{-2s\alpha} (s \xi)^{\tau + 3} \vert \varphi \vert^2 dxdt \right].
\end{equation}

Moreover, $s_0$ has the form $s_0 = C_0(T+ T^2),$ where $C_0$ is a positive constant depending only on $\omega$ and $\Omega$.
\end{prop}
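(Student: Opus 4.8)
The plan is to reduce the statement to the standard global Carleman inequality of Fursikov--Imanuvilov type for a single scalar parabolic equation, as in \cite{GlobalCarleman}, and then to recover the parametrized form $I(\tau,\varphi)$ and absorb the lower-order coefficients $A$ and $a$. I would first treat the base case $\tau=1$ for the principal part only, writing $\tilde F := F - A\cdot\nabla\varphi - a\varphi$ so that $-\varphi_t - \nabla\cdot(H\nabla\varphi) = \tilde F$. Introduce $w := e^{-s\alpha}\varphi$, which vanishes on $\Sigma_T$ and, because $e^{-s\alpha}$ and all its derivatives decay faster than any power of $t(T-t)$ as $t\to 0^+,T^-$, produces no boundary contribution in $t$ either. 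Conjugation gives $e^{-s\alpha}\big(-\partial_t-\nabla\cdot(H\nabla\cdot)\big)(e^{s\alpha}w)=P_1w+P_2w+Rw$, where $P_1$ collects the formally self-adjoint terms, $P_2$ the skew-adjoint ones, and $R$ gathers the remainders generated by the spatial variation of $H$ (here $H\in[W^{1,\infty}(Q_T)]^{N^2}$ and its implicit uniform ellipticity are used; note that in the application to \eqref{sistema_adj} one has $H=\kappa(0)I$ or $\sigma(0)I$ constant, so $R\equiv 0$ and this bookkeeping is mild).

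I then form the identity $\|P_1w\|_{L^2(Q_T)}^2+\|P_2w\|_{L^2(Q_T)}^2+2(P_1w,P_2w)_{L^2(Q_T)}=\|e^{-s\alpha}\tilde F-Rw\|_{L^2(Q_T)}^2$ and expand the cross product $2(P_1w,P_2w)$ by repeated integration by parts. Using $\nabla\alpha=-\lambda\xi|\nabla\eta_0|\,\mathbf{n}$-type identities together with $|\nabla\xi|\le C\lambda\xi$, $|\partial_t\alpha|\le CT\xi^2$ and $|\partial_t^2\alpha|\le CT^2\xi^3$, the dominant contributions yield the four positive weighted terms $(s\lambda\xi)^4|w|^2$, $(s\lambda\xi)^2|\nabla w|^2$, $|\Delta w|^2$ and $|w_t|^2$ on $\Omega\setminus\omega$; the largeness of $\lambda$ is precisely what makes the gradient-of-the-weight factors dominate the remainders $Rw$. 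Every leftover integral is of strictly lower order in $s\xi$ (and in $\lambda$) and is absorbed into the dominant terms once $s\ge s_0=C_0(T+T^2)$ and $\lambda\ge\lambda_0$; the form of $s_0$ comes from requiring $s\xi^2$ and $s^2\xi^3$ to dominate the worst lower-order factors $T\xi^2$ and $T^2\xi^3$.

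The global term that survives on all of $\Omega$ is localized to $\omega$ with a cutoff $\chi\in C_c^\infty(\Omega)$ supported where $|\nabla\eta_0|>0$ and equal to $1$ off a neighbourhood of $\{|\nabla\eta_0|=0\}$; such $\chi$ exists exactly by the Fursikov function lemma, since $|\nabla\eta_0|>0$ on $\Omega\setminus\omega$. Integration by parts converts the excess gradient term into an interior term supported in $\omega$ plus already-controlled quantities, and a further cutoff argument replaces the local $|\nabla\varphi|^2$ by the local $|\varphi|^2$, producing $\lambda^4\iint_{\omega\times(0,T)}e^{-2s\alpha}(s\xi)^{\tau+3}|\varphi|^2$ on the right. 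Returning from $w$ to $\varphi$ gives $I(1,\varphi)$. To obtain general $\tau$ I would apply this base case to $\tilde\varphi:=(s\xi)^{(\tau-1)/2}\varphi$, which solves the same type of equation with source $(s\xi)^{(\tau-1)/2}F$ plus commutators $\partial_t\big((s\xi)^{(\tau-1)/2}\big)\varphi$, $\nabla\big((s\xi)^{(\tau-1)/2}\big)\cdot\nabla\varphi$ and $\Delta\big((s\xi)^{(\tau-1)/2}\big)\varphi$; each of these is again lower order by the bounds on $\partial_t\xi$ and $\nabla\xi$ and is absorbed.

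Finally I reincorporate the first- and zeroth-order terms: since $\tilde F=F-A\cdot\nabla\varphi-a\varphi$, the contributions $\iint_{Q_T}e^{-2s\alpha}(s\xi)^{\tau}\big(|A|^2|\nabla\varphi|^2+|a|^2|\varphi|^2\big)$ sit one power of $s\xi$ below the dominant gradient term $\lambda^2(s\xi)^{\tau+1}|\nabla\varphi|^2$ and three powers below $\lambda^4(s\xi)^{\tau+3}|\varphi|^2$ in $I(\tau,\varphi)$, so the $L^\infty$ bounds on $A$ and $a$ let them be absorbed for $s,\lambda$ large. I expect the main obstacle to be the cross-term computation together with the correct accounting of the remainders $Rw$ from the variable coefficient $H$ and of the commutators from the $\tau$-substitution: one must verify that each such term genuinely carries a lower power of $s\xi$ (or of $\lambda$) than the positive term absorbing it, so that the thresholds $s_0$ and $\lambda_0$ depend only on $\Omega$, $\omega$ and $\|H\|_{W^{1,\infty}(Q_T)}$, as claimed.
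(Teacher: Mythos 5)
The paper itself gives no proof of Proposition \ref{prop2.2} beyond the single line ``See \cite{Imanuvilov_Yamamoto-01}'', and your sketch reconstructs exactly the Fursikov--Imanuvilov machinery that underlies that citation (and \cite{GlobalCarleman}): conjugation $w=e^{-s\alpha}\varphi$, splitting into symmetric/antisymmetric parts $P_1,P_2$, expansion of the cross term, localization of the surviving global term into $\omega$ with a cutoff, the shift to general $\tau$ via $\tilde\varphi=(s\xi)^{(\tau-1)/2}\varphi$ plus commutators, and absorption of $A\cdot\nabla\varphi+a\varphi$ by power counting in $s\xi$ and $\lambda$. The architecture and the power counting you describe are correct, so in spirit this is the same approach as the source the paper defers to.

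There is, however, one concrete step that would fail as you have written it: the spatial boundary terms. When you expand $2(P_1w,P_2w)$ by parts, the term pairing $-\Delta w$ with $2s\lambda\xi\,\nabla\eta_0\cdot\nabla w$ produces, using $w=0$ and hence $\nabla w=(\partial_\nu w)\nu$ on $\Sigma_T$, the boundary integral $-2s\lambda\iint_{\Sigma_T}\xi\,(\partial_\nu\eta_0)\,|\partial_\nu w|^2\,d\sigma\,dt$, which your sketch never mentions (you only dispose of the temporal boundary contributions). This integral is harmless precisely when $\partial_\nu\eta_0\le 0$ on $\partial\Omega$, which holds for the \emph{classical} Fursikov function ($\eta_0=0$ on $\partial\Omega$, $\eta_0>0$ inside) but is \emph{not} guaranteed by this paper's modified lemma, which produces $\eta_0>0$ on all of $\overline\Omega$ by restricting a function defined on a larger domain $\hat\Omega$ --- and that $\eta_0$ is the one entering the definitions of $\alpha$ and $\xi$ used in the proposition. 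So if you run your computation with the weights as defined in the paper, you are left with a boundary term of no definite sign and nothing to absorb it. Under the proposition's own hypothesis $\omega\subset\subset\Omega$ the fix is simply to build the weight from the boundary-adapted $\eta_0$ of \cite{Fursikov_Imanuvilov-96} (or apply that lemma to some $\omega_0\subset\subset\omega$), and you should say so explicitly. Two smaller corrections: the natural base case in the paper's normalization is $\tau=0$, not $\tau=1$ (harmless, given your shift argument); and your absorption of the lower-order terms necessarily makes $s_0,\lambda_0$ depend on $\Vert A\Vert_{L^\infty}$ and $\Vert a\Vert_{L^\infty}$ as well (in standard statements $s_0\sim T+T^2+T^2\Vert a\Vert_\infty^{2/3}+\dots$), so the claim that the thresholds depend only on $\Omega$, $\omega$ and $\Vert H\Vert_{W^{1,\infty}}$ is not quite right --- though that sloppiness is inherited from the proposition's own statement.
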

\begin{proof}
    See \cite{Imanuvilov_Yamamoto-01}.
\end{proof}
Let $\tau_1, \tau_2 \in \mathbb{R}$. A direct application of Proposition \ref{prop2.2} leads to
\begin{align*}
    &I(\tau_1, \phi) + I(\tau_2, \psi) \leq C \left(  \iint_{Q_T} e^{-2s\alpha} (s \xi)^{\tau_1} \vert F \vert^2 dx dt + \iint_{Q_T} e^{-2s\alpha} (s \xi)^{\tau_2} \vert G \vert^2 dx dt \right.\\ 
    &\phantom{AAAAAA}+ \left.  \lambda^4 \iint_{\omega \times (0,T)} e^{-2s\alpha} (s \xi)^{\tau_1 + 3} \vert \phi \vert^2 dxdt +  \lambda^4 \iint_{\omega \times (0,T)} e^{-2s\alpha} (s \xi)^{\tau_2 + 3} \vert \psi \vert^2 dxdt  \right).
\end{align*}

We also use the notations:
\begin{equation}
\label{eq:alpha_sigma_mais_menos}
\begin{split}
\alpha^+(t) = \max_{x\in \Omega}\alpha(x,t), \qquad \alpha^-(t) = \min_{x\in \Omega}\alpha(x,t),\\
\xi^+(t) = \max_{x\in \Omega}\xi(x,t),\ \ \qquad \xi^-(t) = \min_{x\in \Omega}\xi(x,t),\\
\hat{\alpha}(t) = 4 \alpha^{-}(t) - 3 \alpha^{+}(t),\  \ \  \  \ \  \ \  \  \  \  \  \  \  \  \ \    \\
\tau^* = \max \{ 2 \tau_2 - \tau_1 + 7, 1 - \tau_1, 4 \tau_2 - 3 \tau_1 + 15, \tau_2 + 7 \}.
\end{split}
\end{equation}

\begin{prop}\label{Prop_Benabdallah}
Assume the condition \eqref{Hyp8}. If $|\tau_1 - \tau_2| < 1$. Then, there exist three positive constants $\lambda^*, s^*, C^* = C^*(\Omega, \omega, t_1, t_2, T, \tau_1, \tau_2, z^*)$ and a constant $K \geq \| \eta_0\|_{\infty} + 2 \log(2) $, such that for every $(\phi_T, \psi_T) \in [L^2(\Omega)]^2$ the following Carleman estimate holds 
\begin{align*}
    I(\tau_1, \phi) + I(\tau_2, \psi) 
    &\leq C^* \left(  \iint_{Q_T} e^{-2s\alpha} (\lambda^4 (s \xi)^{3+\tau_2} \vert F \vert^2 + (s \xi)^{\tau_2} \vert G \vert^2 ) dx dt \right.\\ 
    &\phantom{AAAA}+ \left. \lambda^{16} \iint_{\omega \times (0,T)} e^{-2s\hat{\alpha}} (s \xi^{+})^{\tau^*} \vert \phi \vert^2 dxdt \right),
\end{align*}
for all $s \geq s^*$, $\lambda \geq \lambda^*$, for all $(\phi, \psi)$ solution of \eqref{sistema_adj}.
\end{prop}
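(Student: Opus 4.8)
The content of the estimate is that the observation on the right-hand side involves only the first component $\phi$, whereas the naive coupled estimate obtained by applying Proposition \ref{prop2.2} to each scalar equation of \eqref{sistema_adj} separately produces local integrals of \emph{both} $\phi$ and $\psi$. My plan is therefore to start from that coupled estimate and then (i) absorb the first-order cross-coupling terms into the left-hand side $I(\tau_1,\phi)+I(\tau_2,\psi)$, and (ii) trade the local observation of $\psi$ for a local observation of $\phi$, exploiting the coupling structure together with the transversality hypothesis \eqref{Hyp8}.

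\textbf{Absorption of cross terms.} Applying Proposition \ref{prop2.2} to the $\phi$-equation of \eqref{sistema_adj} forces us to place the coupling term $\sigma'(0)\nabla\psi\cdot\nabla z^*$ in the source, and likewise the $\psi$-equation inherits the source contribution $2\sigma(0)\big(\nabla z^*\cdot\nabla\phi + (\Delta z^*)\,\phi\big)$. This produces, on the right-hand side, weighted integrals of $|\nabla\psi|^2$, of $|\nabla\phi|^2$, and of $|\phi|^2$. Since $\nabla z^*$ is bounded by \eqref{Hyp7}, and since the weight of $|\nabla\phi|^2$ inside $I(\tau_1,\phi)$ carries the extra factor $(s\lambda\xi)^2$ (and analogously for $\psi$ inside $I(\tau_2,\psi)$, cf. \eqref{eq:formula_I}), these gradient contributions can be absorbed into the left-hand side provided the two weight exponents differ by less than one unit: this is exactly where $|\tau_1-\tau_2|<1$ is used, taking $s\ge s^*$ and $\lambda\ge\lambda^*$ large enough. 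After absorption one is left with the coupled Carleman inequality carrying local terms in both $\phi$ and $\psi$.

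\textbf{Eliminating the local term in $\psi$ (main obstacle).} From the first equation of \eqref{sistema_adj} I read off the directional derivative
\[
\sigma'(0)\,\nabla\psi\cdot\nabla z^* \;=\; F + \phi_t + \kappa(0)\Delta\phi + \sigma'(0)|\nabla z^*|^2\phi ,
\]
so the derivative of $\psi$ along the vector field $\nabla z^*$ is controlled pointwise by $F,\phi_t,\Delta\phi,\phi$. Near the boundary point $x_0\in\gamma\subset\partial\Omega\cap\partial\omega$ of \eqref{Hyp8}, where $\nabla z^*\cdot\nu\neq0$ on $[t_1,t_2]$, the field $\nabla z^*$ is transversal to $\partial\Omega$; by the $C^3$-regularity in \eqref{Hyp7} its flow is well defined, and shrinking a neighbourhood $V\subset\omega$ of $x_0$, each integral curve issuing from $\partial\Omega$ reaches the points of $V$ after a short, sign-definite time. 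Since $\psi=0$ on $\Sigma_T$, integrating the identity above along these curves recovers $\psi$ in $V\times[t_1,t_2]$ from the integral of its directional derivative, hence bounds $|\psi|^2$ there by local integrals of $|F|^2,|\phi_t|^2,|\Delta\phi|^2,|\phi|^2$. The contribution to the local $\psi$-integral from the complementary time set $(0,T)\setminus[t_1,t_2]$ is harmless, since there $e^{-2s\alpha}$ is super-exponentially small and is reabsorbed into $I(\tau_2,\psi)$. Comparing $e^{-2s\alpha}$ on $[t_1,t_2]$ with the global weight $e^{-2s\hat\alpha}$ through $\alpha^-\le\alpha\le\alpha^+$ and $\hat\alpha=4\alpha^--3\alpha^+\le\alpha$ (see \eqref{eq:alpha_sigma_mais_menos}) accounts for the loss of powers recorded in $\tau^*$ and in the factor $\lambda^{16}$.

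\textbf{Reabsorption and conclusion.} The recovered bound for $\psi$ introduces local integrals of $\phi_t$ and $\Delta\phi$, which are precisely the quantities already controlled by $I(\tau_1,\phi)$; matching the weight powers carefully lets me reabsorb them, leaving only the local observation of $\phi$ with weight $e^{-2s\hat\alpha}(s\xi^+)^{\tau^*}$ and constant $\lambda^{16}$. Substituting this bound for the local $\psi$-term into the coupled estimate yields the claim for all $s\ge s^*$, $\lambda\ge\lambda^*$. The genuinely delicate point is the recovery step: constructing the flow of $\nabla z^*$ near $x_0$ so that its integral curves connect the boundary, where $\psi$ vanishes, to points of $\omega$, and controlling the weight functions along these curves so that transferring the estimate from $\psi$ to $\phi$ preserves the exponential Carleman structure. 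The restriction of \eqref{Hyp8} to the subinterval $[t_1,t_2]$ is exactly what forces the modified weight $\hat\alpha$ and the exponent $\tau^*$ of \eqref{eq:alpha_sigma_mais_menos}.
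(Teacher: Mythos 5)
Your route is genuinely different from the paper's, and as written it contains two fatal gaps. The paper never reproves the elimination of the $\psi$-observation: it restricts $(\phi,\psi)$ to $\Omega\times[t_1,t_2]$, where \eqref{Hyp8} holds, and invokes Theorem 2.4 of \cite{Benab} — which is precisely the two-equation Carleman estimate with observation in a single component under the boundary transversality condition — obtaining the estimate for the truncated functionals $I(\tau_i,\cdot,(t_1,t_2))$; all the remaining work is the time-extension $I(\tau,\varphi)\leq \hat C\,I(\tau,\varphi,(t_1,t_2))$, which uses the choice \eqref{Eq-Carleman-1} of $t_1,t_2$ and a change of variables in time. You instead apply Proposition \ref{prop2.2} globally on $(0,T)$ to each equation and try to rebuild the content of the cited theorem. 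Your absorption of the first-order coupling terms under $|\tau_1-\tau_2|<1$ is fine and standard, but it leaves the local term $\lambda^4\iint_{\omega\times(0,T)}e^{-2s\alpha}(s\xi)^{\tau_2+3}|\psi|^2\,dxdt$, while \eqref{Hyp8} gives transversality only on $[t_1,t_2]$. For the complementary times you claim the term is ``harmless, since $e^{-2s\alpha}$ is super-exponentially small and is reabsorbed into $I(\tau_2,\psi)$.'' This is not an argument: by \eqref{eq:formula_I}, $I(\tau_2,\psi)$ contains exactly $\iint (s\xi)^{\tau_2-1}(s\lambda\xi)^4e^{-2s\alpha}|\psi|^2=\lambda^4\iint(s\xi)^{\tau_2+3}e^{-2s\alpha}|\psi|^2$, i.e.\ the \emph{identical} weight; there is no small factor to exploit, so no absorption is possible, and on $(0,T)\setminus[t_1,t_2]$ there is no coupling mechanism left to convert $\psi$ into $\phi$. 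This is precisely why the paper orders the argument the other way: first the estimate localized on $[t_1,t_2]$, then extension of the left-hand side in time.

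The second gap occurs even on $[t_1,t_2]$. Your recovery of $\psi$ by integrating the first equation of \eqref{sistema_adj} along the flow of $\nabla z^*$ (a reasonable heuristic for why \eqref{Hyp8} matters) produces local observations of $|\phi_t|^2$ and $|\Delta\phi|^2$, which you propose to ``reabsorb'' into $I(\tau_1,\phi)$ by matching weight powers. Absorption requires those terms to be dominated by a small multiple of the corresponding left-hand side terms, and the opposite inequality holds: from \eqref{eq:alpha_sigma_mais_menos} one has $\hat\alpha=4\alpha^--3\alpha^+\leq\alpha^-\leq\alpha$, hence $e^{-2s\hat\alpha}\geq e^{-2s\alpha}$ on $\omega$, and $\tau^*\geq\tau_2+7>\tau_1+6$ (using $|\tau_1-\tau_2|<1$), so the weight $e^{-2s\hat\alpha}(s\xi^+)^{\tau^*}$ strictly dominates the weight $(s\xi)^{\tau_1-1}e^{-2s\alpha}$ that $|\phi_t|^2$ and $|\Delta\phi|^2$ carry inside $I(\tau_1,\phi)$ (note these terms come with no gain in $s$ or $\lambda$). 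In \cite{Benab} such derivative observations are removed by local parabolic/elliptic estimates with cutoffs and a bootstrap between shifted weights — that is exactly where $\hat\alpha$, the exponent $\tau^*$ and the factor $\lambda^{16}$ originate — not by absorption into the left-hand side. So both halves of your elimination step fail as stated; what you are sketching is essentially the theorem of \cite{Benab}, and it cannot be obtained by absorption arguments alone.
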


\begin{proof}
    By hypothesis \eqref{Hyp8}, we have $\nabla z^*(x_0,t) \cdot \nu(x_0) \neq 0$ for all $t \in [t_1, t_2]$ and for some $x_0 \in \gamma \subset \partial \Omega \cap \partial \omega$. Therefore, we can apply Theorem 2.4 from \cite{Benab} for the functions $\phi \Big|_{\Omega \times [t_1, t_2]}$ and $\psi \Big|_{\Omega \times [t_1, t_2]}$. That is, we get
    \begin{align*}
    I(\tau_1, \phi, (t_1, t_2)) + I(\tau_2, \psi, (t_1, t_2)) 
    &\leq C \left(  \int_{t_1}^{t_2} \int_{\Omega} e^{-2s\alpha} (\lambda^4 (s \xi)^{3+\tau_2} \vert F \vert^2 + (s \xi)^{\tau_2} \vert G \vert^2 ) dx dt \right.\\ 
    &\phantom{AAAA}+ \left. \lambda^{16} \int_{\omega \times (t_1,t_2)} e^{-2s\hat{\alpha}} (s \xi^{+})^{\tau^*} \vert \phi \vert^2 dxdt \right),
\end{align*}
    where $C:=C(\Omega, \omega, T, \tau_1, \tau_2, z^*)>0$ and
    $$I(\tau, \varphi, (t_1,t_2)) :=  \int_{t_1}^{t_2} \int_{\Omega}  (s\xi)^{\tau-1} e^{-2s\alpha} \Big( \vert \varphi_t\vert^2 +\vert \Delta \varphi \vert^2  + (s \lambda \xi)^2 \vert \nabla \varphi \vert^2 + (s \lambda \xi)^4 \vert \varphi \vert^2 \Big) dx dt.$$

Our goal now is to show that
    $$I(\tau, \varphi) \leq \hat{C}(t_1, t_2, T) I(\tau, \varphi, (t_1, t_2)),$$
because then we can conclude that
\begin{align*}
    I(\tau_1, \phi) + I(\tau_2, \psi) 
    &\leq C^* \left(  \iint_{Q_T} e^{-2s\alpha} (\lambda^4 (s \xi)^{3+\tau_2} \vert F \vert^2 + (s \xi)^{\tau_2} \vert G \vert^2 ) dx dt \right.\\ 
    &\phantom{AAAA}+ \left. \lambda^{16} \iint_{\omega \times (0,T)} e^{-2s\hat{\alpha}} (s \xi^{+})^{\tau^*} \vert \phi \vert^2 dxdt \right),
\end{align*}
where $C^*:=C^*(\Omega, \omega, t_1, t_2, T, \tau_1, \tau_2, z^*)>0$.\\
Indeed, choosing $t_1$ and $t_2$ such that: for any $a \in \mathbb{R}$, $x \in \Omega$, $t \in (t_1, t_2)$ and $t' \in (0,T) \backslash [t_1, t_2]$ we have
    \begin{equation}\label{Eq-Carleman-1}
        (s \xi(x,t'))^{\tau + a}\ e^{-2s \alpha(x,t')} \leq (s \xi(x,t))^{\tau + a}\ e^{-2s \alpha(x,t)}.
    \end{equation}
This is possible due to the behavior of the function $(s\xi)^{\tau + a} e^{-2s \alpha}$ in $Q_T$. \\
We denote by $Q \varphi$ any operation applied to $\varphi$ that appears in $I(\tau, \varphi)$, that is, $Q \varphi$ can represent the following functions: $\varphi_t$, $\nabla \varphi$, $\Delta \varphi$ and $\varphi$.\\
    Multiplying $|Q \varphi(x,t')|^2$ to both sides of \eqref{Eq-Carleman-1} and integrating over $\Omega \times (0,T) \backslash [t_1, t_2]$ we have
    \begin{equation}\label{eq:para_Carl1}
    \begin{split}
        &\iint_{\Omega \times (0,T) \backslash [t_1, t_2]} (s \xi(x,t'))^{\tau + a}\ e^{-2s \alpha(x,t')} |Q \varphi(x,t')|^2 dxdt'\\ 
        &\phantom{AAAAAAA}\leq  \iint_{\Omega \times (0,T) \backslash [t_1, t_2]} (s \xi(x,t))^{\tau + a}\ e^{-2s \alpha(x,t)} |Q \varphi(x,t')|^2 dxdt'\\
        &\phantom{AAAAAAA}\leq  \sup_{\Omega \times [t_1, t_2]} \left( (s \xi(x,t))^{\tau + a}\ e^{-2s \alpha(x,t)} \right) \iint_{\Omega \times (0,T) \backslash [t_1, t_2]} |Q \varphi(x,t')|^2 dxdt'\\
        &\phantom{AAAAAAA}=  W_{\sup} \iint_{\Omega \times (0,T) \backslash [t_1, t_2]} |Q \varphi(x,t')|^2 dxdt'.
    \end{split}
    \end{equation}
    We define a bijection between $[0,t_1) \cup (t_2,T)$ and $(t_1, t_2)$ as
    $$t'(t) =\begin{cases}
        \dfrac{t_1 + t_2}{2} -\dfrac{t_2-t_1}{2t_1}\ t   &\text{if }\ \ t \in [0, t_1),\\
        t_2 - \dfrac{t_2-t_1}{2(T-t_2)}(t-t_2) &\text{if } \ \ t \in (t_2, T).
    \end{cases}$$
    Using the Theorem of Change of Variable, it is clear that
    \begin{equation}\label{eq:para_carl2}
        \iint_{\Omega \times (0,T) \backslash [t_1, t_2]} |Q \varphi(x,t')|^2 dxdt' \leq C(t_1, t_2, T) \iint_{\Omega \times (t_1, t_2)} |Q \varphi(x,t)|^2 dxdt.    
    \end{equation}
Combining \eqref{eq:para_Carl1} and \eqref{eq:para_carl2}, we obtain
\begin{align*}
    &\iint_{\Omega \times (0,T) \backslash [t_1, t_2]} (s \xi(x,t'))^{\tau + a}\ e^{-2s \alpha(x,t')} |Q \varphi(x,t')|^2 dxdt'\\ 
    &\phantom{AAAAAAA}\leq  W_{\sup}\ C(t_1, t_2, T) \iint_{\Omega \times (t_1, t_2)} |Q \varphi(x,t)|^2 dxdt\\
    &\phantom{AAAAAAA}\leq \dfrac{W_{\sup}}{W_{\inf}}\ C(t_1, t_2, T) \iint_{\Omega \times (t_1, t_2)} (s \xi(x,t))^{\tau + a}\ e^{-2s \alpha(x,t)} |Q \varphi(x,t)|^2 dxdt.
\end{align*}
Then,
\begin{align*}
    &\iint_{\Omega \times (0,T)} (s \xi(x,t))^{\tau + a}\ e^{-2s \alpha(x,t)} |Q \varphi(x,t)|^2 dxdt\\
    &\phantom{AAAAAA}=\iint_{\Omega \times (t_1,t_2)} (s \xi(x,t))^{\tau + a}\ e^{-2s \alpha(x,t)} |Q \varphi(x,t)|^2 dxdt\\
    &\phantom{AAAAAA}+\iint_{\Omega \times (0,T) \backslash [t_1, t_2]} (s \xi(x,t))^{\tau + a}\ e^{-2s \alpha(x,t)} |Q \varphi(x,t)|^2 dxdt\\
    &\phantom{AAAAAA}\leq \Big( 1 + \dfrac{W_{\sup}}{W_{\inf}}\ C(t_1, t_2, T) \Big) \iint_{\Omega \times (t_1, t_2)} (s \xi(x,t))^{\tau + a}\ e^{-2s \alpha(x,t)} |Q \varphi(x,t)|^2 dxdt.
\end{align*}
This concludes our desired result, 
$$I(\tau, \varphi) \leq \hat{C}(t_1, t_2, T) I(\tau, \varphi, (t_1, t_2)),$$
where $\hat{C}(t_1, t_2, T):=4\Big( 1 + \dfrac{W_{\sup}}{W_{\inf}}\ C(t_1, t_2, T) \Big)$.
\end{proof}

We will need some Carleman estimates for the solution to \eqref{sistema_adj} with weights that do not vanish at zero. So, let $m(\cdot)$ be a function satisfying
$$m \in C^{\infty}([0,T]),\ \ \ m(t)\geq \frac{T^2}{8}\ \ \text{in}\ \ \left[0,\frac{T}{2}\right],\ \ \ m(t)=t(T-t)\ \ \text{in}\ \ \left[\frac{T}{2},T\right],$$
and
$$\overline{\xi}(x,t):=\frac{e^{\lambda \eta_0(x)}}{m(t)},\ \ \ \overline{\alpha}(x,t):=\frac{e^{2\lambda K}-e^{\lambda \eta_0(x)}}{m(t)}.$$
If, we introduce the following notation
$$\overline{I}(\tau,\varphi):= \iint_{Q_T} (s \overline{\xi})^{\tau-1} e^{-2s \overline{\alpha}} \Big[ |\varphi_t|^2 + |\Delta \varphi|^2 + (s \lambda \overline{\xi})^2 |\nabla \varphi|^2 + (s \lambda\overline{\xi})^4 |\varphi|^2 \Big]\,dxdt,$$
and denote by
$$
\begin{array}{l}
\displaystyle (\overline{\xi})^{-}(t)=\min_{x\in \Omega} \overline{\xi}(x,t),\,\, \  \ \ \  (\overline{\xi})^{+}(t)=\max_{x\in \Omega} \overline{\xi}(x,t),\\
\displaystyle (\overline{\alpha})^{-}(t)=\min_{x\in \Omega} \overline{\alpha}(x,t),\ \ \ \,\,\, (\overline{\alpha})^{+}(t)=\max_{x\in \Omega}\overline{\alpha}(x,t),\\
\ \ \ \ \ \ \ \ \ \ \ \ \ \  (\overline{\hat{\alpha}})(t) = 4 (\overline{\alpha})^{-}(t) - 3 (\overline{\alpha})^{+}(t), 
\end{array}
$$
we have the following Carleman estimates:
\begin{prop}\label{Prop2}
 Under the same hypothesis of Proposition \ref{Prop_Benabdallah}, there exist positive constants $s_1$ and $C_1$ such that, for any $\lambda > > 1$, $s \geq s_1$ and $\varphi_T~\in~L^2(\Omega)$, the associated solution to \eqref{sistema_adj} satisfies
\begin{align*}
    \overline{I}(\tau_1, \phi) + \overline{I}(\tau_2, \psi) &\leq C_1 \left(  \iint_{Q_T} e^{-2s (\overline{\alpha})^{+}} \Big(\lambda^4( s (\overline{\xi})^{+})^{3 + \tau_2} \vert F \vert^2 + (s (\overline{\xi})^{+})^{\tau_2} \vert G \vert^2 \Big) dx dt \right.\\ 
    &\phantom{AAAA}+ \left. \lambda^{16} \iint_{\omega \times (0,T)} e^{-2s \overline{\hat{\alpha}}} (s (\overline{\xi})^{+})^{\tau^*} \vert \phi \vert^2 dxdt \right).
\end{align*}

Furthermore, $C_1$ and $s_1$ only depend on $\Omega$, $\omega$, $T$.
\end{prop}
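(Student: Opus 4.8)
The plan is to deduce the barred estimate from the unbarred Carleman estimate of Proposition \ref{Prop_Benabdallah} together with classical parabolic energy estimates for the backward system \eqref{sistema_adj}. The essential structural fact is that, by the very definition of $m$, on $[T/2,T]$ one has $m(t)=t(T-t)$, hence $\overline{\alpha}=\alpha$ and $\overline{\xi}=\xi$ there, so that $\overline{I}(\tau,\varphi)$ and $I(\tau,\varphi)$ have identical integrands on $\Omega\times(T/2,T)$. Thus the portion of $\overline{I}(\tau_1,\phi)+\overline{I}(\tau_2,\psi)$ coming from $\Omega\times(T/2,T)$ is bounded above by $I(\tau_1,\phi)+I(\tau_2,\psi)$, which Proposition \ref{Prop_Benabdallah} controls by exactly the right-hand side we are after (on $(T/2,T)$ the weights $(\overline{\xi})^{+}$, $(\overline{\alpha})^{+}$ reduce to $\xi^{+}$, $\alpha^{+}$, and on $(0,T/2)$ they dominate the corresponding quantities). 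It therefore remains to estimate the contribution of $\Omega\times(0,T/2)$.

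On $(0,T/2)$ we have $m(t)\geq T^2/8$, so $\overline{\xi}$, $\overline{\alpha}$ and every factor $(s\overline{\xi})^{b}e^{-2s\overline{\alpha}}$ appearing in $\overline{I}$ are bounded from above and from below by positive constants depending only on $s,\lambda,T,\Omega$. Hence the $(0,T/2)$-part of $\overline{I}(\tau_1,\phi)+\overline{I}(\tau_2,\psi)$ is dominated by $C\int_0^{T/2}\big(\|\phi_t\|^2+\|\Delta\phi\|^2+\|\nabla\phi\|^2+\|\phi\|^2+\|\psi_t\|^2+\|\Delta\psi\|^2+\|\nabla\psi\|^2+\|\psi\|^2\big)\,dt$, and the task is reduced to bounding these unweighted energies on $[0,T/2]$.

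To this end I introduce a cut-off $\theta\in C^\infty([0,T])$ with $\theta\equiv 1$ on $[0,T/2]$ and $\mathrm{supp}\,\theta\subset[0,3T/4)$, and apply energy estimates to $(\theta\phi,\theta\psi)$, which solve \eqref{sistema_adj} on $(0,3T/4)$ with vanishing data at $t=3T/4$ and modified sources $\theta F-\theta'\phi$, $\theta G-\theta'\psi$. This is carried out in two stages. A first estimate (multiply the equations by $\phi$ and $\psi$), using the uniform ellipticity \eqref{Hyp5} and $\nabla z^*\in L^\infty$ (from \eqref{Hyp7}) to absorb the first-order coupling terms by Young's inequality, yields control of $\|\phi\|_{L^2(0,3T/4;L^2)}$, $\|\psi\|_{L^2(0,3T/4;L^2)}$ and of the gradients in $L^2(L^2)$ in terms of $\|F\|$, $\|G\|$ and the $\theta'$-terms. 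A second, higher-order estimate (multiply by $-\Delta\phi$ and $-\Delta\psi$) then bounds $\int_0^{T/2}\big(\|\Delta\phi\|^2+\|\phi_t\|^2+\|\Delta\psi\|^2+\|\psi_t\|^2\big)\,dt$, the coupling being absorbed through the interpolation $\|\nabla\psi\|^2\le\epsilon\|\Delta\psi\|^2+C_\epsilon\|\psi\|^2$ together with the $L^2$-bounds already obtained in the first stage.

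Finally I absorb the resulting right-hand sides into the target inequality. The $\theta'$-terms are supported in $[T/2,3T/4]$, where $(s\lambda\xi)^{4}(s\xi)^{\tau_i-1}e^{-2s\alpha}$ is bounded below by a positive constant, so $\int_{T/2}^{3T/4}\big(\|\phi\|^2+\|\psi\|^2\big)\,dt\le C\big(I(\tau_1,\phi)+I(\tau_2,\psi)\big)$, controlled again by Proposition \ref{Prop_Benabdallah}. The source integrals $\int_0^{3T/4}\big(\|F\|^2+\|G\|^2\big)\,dt$ are dominated by the weighted source integrals on the right-hand side of the claimed inequality, since on $[0,3T/4]$ (where $m$ is bounded above and below) the factors $e^{-2s(\overline{\alpha})^{+}}(s(\overline{\xi})^{+})^{b}$ are bounded below. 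Collecting the $(0,T/2)$ and $(T/2,T)$ contributions yields the barred estimate, with $C_1$ and $s_1$ absorbing all constants. I expect the main obstacle to be precisely the two-stage energy estimate of the third step: the first-order coupling terms $\sigma'(0)\nabla\psi\cdot\nabla z^*$ and $2\nabla\cdot((\sigma(0)\nabla z^*)\phi)$ couple the two equations and must be absorbed without losing control, which forces the separation into a basic $L^2$ estimate followed by the $H^2$ estimate.
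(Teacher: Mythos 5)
Your overall plan --- split $\overline{I}$ at $t=T/2$, use that $\overline{\alpha}=\alpha$ and $\overline{\xi}=\xi$ on $[T/2,T]$ to invoke the Carleman estimate there, and control the $(0,T/2)$ part by cut-off energy estimates whose $\theta'$-terms are re-absorbed through the Carleman estimate on the middle region --- is the standard argument, and it is essentially what the paper outsources to Proposition 2.2 of \cite{Thermistor_HNLP-23}. The two-stage energy estimate for $(\theta\phi,\theta\psi)$ is also sound as described: $\kappa(0)$ and $\sigma(0)$ are constants, \eqref{Hyp7} gives $\nabla z^*,\Delta z^*\in L^\infty$, and the cut-off kills the terminal data. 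However, there is a genuine gap at the pivotal step, namely the claim that Proposition \ref{Prop_Benabdallah} controls $I(\tau_1,\phi)+I(\tau_2,\psi)$ ``by exactly the right-hand side we are after.''

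It does not. The source terms in Proposition \ref{Prop_Benabdallah} carry the \emph{pointwise} weight $e^{-2s\alpha(x,t)}$, whereas the right-hand side of Proposition \ref{Prop2} carries the \emph{uniform-in-$x$} weight $e^{-2s(\overline{\alpha})^{+}(t)}$, which on $(T/2,T)$ equals $e^{-2s\alpha^{+}(t)}$, i.e.\ the \emph{smallest} value of $e^{-2s\alpha(\cdot,t)}$ over $\Omega$. The comparison your step needs,
\[
e^{-2s\alpha(x,t)}\,(s\xi(x,t))^{3+\tau_2}\;\le\; C\, e^{-2s\alpha^{+}(t)}\,(s\xi^{+}(t))^{3+\tau_2}\qquad \text{on }\Omega\times(T/2,T),
\]
is false: at points where $\eta_0(x)=\|\eta_0\|_{\infty}$ one has $\alpha(x,t)=\alpha^{-}(t)$ and $\xi(x,t)=\xi^{+}(t)$, so the ratio of the two sides is exactly $e^{2s(\alpha^{+}(t)-\alpha^{-}(t))}$, and since $\eta_0$ is not constant,
$\alpha^{+}(t)-\alpha^{-}(t)=\big(e^{\lambda\|\eta_0\|_{\infty}}-e^{\lambda\min_{\overline{\Omega}}\eta_0}\big)/\big(t(T-t)\big)\to+\infty$ as $t\to T^{-}$. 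No constant $C$ works, the bounded ($t$-independent) polynomial factor $(\xi/\xi^{+})^{3+\tau_2}$ cannot offset an exponential blow-up, and by concentrating $F$ near such a point $x$ and near $t=T$ the corresponding inequality between integrals fails for every $C$ as well (the same applies to the $G$-term). Note that the barred weights cannot rescue this, precisely because they coincide with the unbarred ones near $t=T$. The defect recurs every time you invoke Proposition \ref{Prop_Benabdallah}, in particular also when you absorb the $\theta'$-terms. What your argument actually yields is the weaker estimate in which $|F|^2$ and $|G|^2$ keep the pointwise weight $e^{-2s\alpha}$; the strengthening to weights depending only on $t$ --- which is exactly what is used later, e.g.\ in Proposition \ref{Prop3}, where $\rho,\rho_0$ are functions of $t$ alone --- cannot be obtained from Proposition \ref{Prop_Benabdallah} used as a black box. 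It must come from reopening the proof of the coupled Carleman inequality (the elimination argument of \cite{Benab}) and estimating the places where $F$ and $G$ enter against $t$-dependent weights from the start; that is the content hidden behind the paper's citation of \cite{Thermistor_HNLP-23}. A secondary, smaller issue: the constants your comparisons on $[0,3T/4]$ produce depend on $s$, $\lambda$ (and, through $C^*$, on $z^*$), while the statement asserts $C_1,s_1$ depend only on $\Omega,\omega,T$; this looseness is shared by the paper and is harmless because $s,\lambda$ are fixed afterwards, but it should be flagged.
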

\begin{proof}
    Similar to the proof of Proposition $2.2$ in \cite{Thermistor_HNLP-23}.
\end{proof}

\begin{rem}\label{remark-c}
If $\lambda > > 1$ and denoting $\displaystyle \beta(x,t)=\frac{1}{2}\overline{\alpha}(x,t)$, then
$$
\displaystyle \beta^+(t)=\max_{x\in \Omega}\beta(x,t) = \frac{(\overline{\alpha})^{+}(  t)}{2},\,\, \beta^{-}(t)=\min_{x\in \Omega}\beta(x,t) = \frac{(\overline{\alpha})^{-}(t)}{2},
$$
and
$$4 \beta^+(t) < 5 \beta^{-}(t) < \frac{75}{16} \beta^{+}(t), \ \forall t \in [0,T].$$
Finally, denoting
$$\overline{I}^{+}(\tau, \varphi):= \iint_{Q_T} (s (\overline{\xi})^{+})^{\tau -1} e^{-5s {\beta}^{+}} \Big[ |\varphi_t|^2 + |\Delta \varphi|^2 +(s  \lambda (\overline{\xi})^{+})^2 |\nabla \varphi|^2 + (s \lambda (\overline{\xi})^{+})^4 |\varphi|^2 \Big]dxdt,$$
then, due to Proposition \ref{Prop2} we have
\begin{align*}
    \overline{I}^+(\tau_1, \phi) + \overline{I}^+(\tau_2, \psi) &\leq C_1 \left( \iint_{Q_T} e^{-4s \beta^{+}} \Big( \lambda^4 (s (\overline{\xi})^{+})^{3 + \tau_2} \vert F \vert^2 + (s (\overline{\xi})^{+})^{\tau_2} \vert G \vert^2 \Big) dx dt \right.\\ 
    &\phantom{AAAA}+ \left. \lambda^{16} \iint_{\omega \times (0,T)} e^{-2s \overline{\hat{\alpha}}} (s (\overline{\xi})^{+})^{\tau^*} \vert \phi \vert^2 dxdt \right).
\end{align*}
\end{rem}

\begin{prop}\label{Observabillity}
    Under the assumptions of Proposition \ref{Prop2}, there exists $C>0$ such that 
    \begin{equation}\label{ineq_Observ}
        \int_\Omega (|\phi(0)|^2 + |\psi(0)|^2) dx \leq C \iint_{\omega \times (0,T)} |\phi|^2 dxdt.
    \end{equation}
\end{prop}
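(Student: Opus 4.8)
The plan is to deduce the observability inequality \eqref{ineq_Observ} from the Carleman estimate of Remark \ref{remark-c} applied with $F=G=0$, combined with a backward energy estimate for the homogeneous adjoint system \eqref{sistema_adj}. The overall scheme is classical: first I would bound $\|\phi(0)\|^2+\|\psi(0)\|^2$ from above by a space--time integral of $|\phi|^2+|\psi|^2$ over a central time interval; then I would recognize that integral, up to bounded weights, as part of the left-hand side $\overline I^+(\tau_1,\phi)+\overline I^+(\tau_2,\psi)$; finally I would invoke the Carleman estimate and strip the weight on the local observation term.

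First I would establish the backward energy estimate. Multiplying the first equation of \eqref{sistema_adj} by $\phi$ and the second by $\psi$, integrating over $\Omega$ and adding, the principal parts produce the coercive terms $\kappa(0)\|\nabla\phi\|^2+\sigma(0)\|\nabla\psi\|^2$. The coupling terms $\sigma'(0)\int_\Omega(\nabla\psi\cdot\nabla z^*)\phi$ and $2\int_\Omega(\sigma(0)\nabla z^*\,\phi)\cdot\nabla\psi$, together with the zeroth-order term $\sigma'(0)\int_\Omega|\nabla z^*|^2\phi^2$, are absorbed using Young's inequality and the $L^\infty$ bounds on $\nabla z^*$ coming from \eqref{Hyp2}. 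This yields a differential inequality of the form $-\frac{d}{dt}\big(\|\phi\|^2+\|\psi\|^2\big)\le C\big(\|\phi\|^2+\|\psi\|^2\big)$, and a backward Gronwall argument gives $\|\phi(0)\|^2+\|\psi(0)\|^2\le C e^{CT}\big(\|\phi(t)\|^2+\|\psi(t)\|^2\big)$ for every $t\in(0,T)$. Integrating over $t\in[T/4,3T/4]$ produces
\begin{equation*}
\|\phi(0)\|^2+\|\psi(0)\|^2 \le C \int_{T/4}^{3T/4}\!\!\int_\Omega \big(|\phi|^2+|\psi|^2\big)\,dx\,dt.
\end{equation*}

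Next I would connect this central integral to the Carleman functional. On the compact subinterval $[T/4,3T/4]$ the weights $e^{-5s\beta^+}(s(\overline\xi)^+)^{\tau-1}(s\lambda(\overline\xi)^+)^4$ appearing in $\overline I^+(\tau,\varphi)$ are bounded below by a positive constant depending only on $s,\lambda,T,\Omega$, so the central integral is controlled by $\overline I^+(\tau_1,\phi)+\overline I^+(\tau_2,\psi)$. Applying Remark \ref{remark-c} with $F=G=0$ bounds this sum by $C\lambda^{16}\iint_{\omega\times(0,T)} e^{-2s\overline{\hat\alpha}}(s(\overline\xi)^+)^{\tau^*}|\phi|^2\,dx\,dt$. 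It remains to strip the weight: since the inequality $4\beta^+<5\beta^-$ of Remark \ref{remark-c} gives $\overline{\hat\alpha}=4(\overline\alpha)^--3(\overline\alpha)^+>0$ with $\overline{\hat\alpha}\to+\infty$ as $t\to T$, while $m$ is bounded below near $t=0$, the function $e^{-2s\overline{\hat\alpha}}(s(\overline\xi)^+)^{\tau^*}$ is continuous and bounded on $\omega\times(0,T)$. Hence the observation term is dominated by $C\iint_{\omega\times(0,T)}|\phi|^2$, and chaining the three estimates yields \eqref{ineq_Observ}.

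The main obstacle is the energy estimate, precisely because \eqref{sistema_adj} is coupled both at order zero and order one: the first-order cross terms involving $\nabla z^*$ mix $\nabla\phi$ and $\nabla\psi$, so one must be careful to absorb them into the available dissipation rather than letting them destroy the sign of the energy. Here it suffices to obtain any Gronwall-type bound (no decay is needed), so the boundedness of $z^*$ guaranteed by \eqref{Hyp2} is enough; the remaining steps---comparing weights on a compact time interval and verifying boundedness of the observation weight---are routine once the positivity $\overline{\hat\alpha}>0$ furnished by Remark \ref{remark-c} is in hand.
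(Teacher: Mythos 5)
Your proof is correct and follows exactly the classical route that the paper invokes: the paper's own "proof" is merely a citation to Theorem 2.2 of \cite{Benab}, whose argument is precisely your scheme — a backward Gronwall/dissipation estimate for the adjoint system \eqref{sistema_adj} (with the first-order couplings absorbed into the $\sigma(0)\|\nabla\psi\|^2$ dissipation via Young's inequality), bounding the central time-slab integral from below by the weighted Carleman functional $\overline{I}^+(\tau_1,\phi)+\overline{I}^+(\tau_2,\psi)$ with $F=G=0$, and finally discarding the observation weight $e^{-2s\overline{\hat{\alpha}}}(s(\overline{\xi})^+)^{\tau^*}$, which is bounded since $\overline{\hat{\alpha}}>0$ (a consequence of $4\beta^+<5\beta^-$, as you note) and the exponential decay dominates the polynomial factor as $t\to T^-$. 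Your write-up is simply a correct, self-contained version of the argument the paper delegates to the reference.
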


\begin{proof}
    Similar to the proof of Theorem $2.2$ of \cite{Benab}.
\end{proof}

\subsection{Null controllability for the linear system (\ref{sistema2_linear})}

In the sequel, we  fix  $\lambda = \lambda_1 > > 1$  and  $s = s_1$ and set
\begin{equation}\label{eq:weights1}
\rho(t) = e^{\frac{5s }{2}\beta^{-}(t)} ((\overline{\xi})^+(t))^{-(3-\tau^*)/2}, \,\, \rho_{0}(t) = e^{2s \beta^+(t)} ((\overline{\xi})^+(t))^{-(3 + \tau_2)/2},    
\end{equation}
\begin{equation}\label{eq:weights2}
\rho_{1}(t) = e^{s \overline{\hat{\alpha}}(t)} ((\overline{\xi})^{+}(t))^{-\tau^*/2},\,\, {\rho}_{k+2}(t) = e^{s \overline{\hat{\alpha}}(t)}((\overline{\xi})^{+}(t))^{-(\tau^* + 8+2k)/2}, \,\, 0 \leq k \leq 4.    
\end{equation}

It is clear that
\begin{equation}\label{eq:comp_weights}
\rho_{k+1} \leq C\rho_{k} \leq C \rho, \ 0 \leq k \leq 5  \ \ \text{ and } \  \ \rho_{k+2}\ \rho_{k+2,t} \leq C \rho_{k+1}^2, \ 0 \leq k \leq 4.    
\end{equation}

Thanks to Remark \ref{remark-c} and Proposition \ref{Observabillity}, we will be able to prove the null controllability of \eqref{sistema2_linear} for functions $f$ and $g$ that decay sufficiently  fast to zero as $t\to T^{-}$. Indeed, one has the following:

\begin{prop}\label{Prop3}
For any $T>0$, the linear system \eqref{sistema2_linear} is null controllable at time $T$. More precisely, for any $(y_0, p_0) \in [H_0^1(\Omega)]^2$ and if $(\rho f, \rho g) \in [L^2(Q_T)]^2$, then, there exist a control $v \in L^2(\omega \times (0,T))$ such that, if $(y,p)$ is the solution of \eqref{sistema2_linear}, one has
\begin{itemize}
	\item[a)] 
	\begin{equation}\label{eqa1-prop3}
	\iint_{Q_T} {\rho}_2^2 (|y|^2 + |p|^2)\ dxdt + \iint_{\omega \times (0,T)} \rho_0^2 |v|^2\, dxdt < +\infty.	
	\end{equation}
	 Furthermore, 
	 \begin{equation}\label{eqa2-prop3}
	 	\begin{array}{l}
	 	\displaystyle \iint_{\omega \times (0,T)} (|({\rho}_4 v)_t|^2 + |\Delta({\rho}_4 v)|^2)\,dxdt\\
		\noalign{\smallskip} 
	\phantom{DDDDD}
		\displaystyle \leq C \left( \iint_{Q_T} \rho^2 (|f|^2 + |g|^2)\ dxdt + \iint_{\omega \times (0,T)} \rho_0^2 |v|^2\ dxdt  \right).
	 	\end{array}
	 \end{equation}
	 
	\item[b)] 
	\begin{equation}\label{eqb-prop3}
\begin{array}{l}
\displaystyle \underset{t\in [0,T]}{\sup} \Big({\rho}_{3}^2(t) (||y(t)||^2 + ||p(t)||^2 ) \Big) + \iint_{Q_T} {\rho}^{2}_{3}(|\nabla y|^{2}+|\nabla p|^{2})\ dxdt\\
\noalign{\smallskip} 
	\phantom{DDDD}\displaystyle  \leq C \left(||y_0||^2 + ||p_0||^2 + \iint_{Q_T} {\rho}_2^2 (|y|^2+ |p|^2)\,dxdt\right. \\
\noalign{\smallskip} 
  \phantom{DDDDD} \displaystyle +\iint_{Q_T}\rho^{2}(|f|^{2}+ |g|^2)\,dx\,dt + \left. \iint_{\omega\times(0,T)} \rho_0^2 |v|^2 \,dxdt\right).
\end{array}
\end{equation}
	\item[c)]
	\begin{equation}\label{eqc-prop3}
\begin{array}{l}
\displaystyle \underset{t\in [0,T]}{\sup} \Big({\rho}_{4}^2(t) ( ||\nabla y(t) ||^2 + ||\nabla p(t)||^2 ) \Big) +\iint_{Q_T} {\rho}_{4}^2( |y_{t}|^{2} + | \Delta y|^2 + |p_t|^2 +|\Delta p|^{2})\,dxdt\\
\noalign{\smallskip} 
	\phantom{DDDDDD}
\displaystyle \leq C \left( ||y_0||_{H_0^1(\Omega)}^2 + ||p_0||_{H_0^1(\Omega)}^2 + \iint_{Q_T} {\rho}_2^2 (|y|^2+ |p|^2)\,dxdt \right. \\
\noalign{\smallskip} 
	\phantom{DDDDDDD}
\displaystyle
\left.  +\iint_{Q_T}\rho^{2} (|f|^{2}+ |g|^2)\,dxdt + \iint_{\omega\times(0,T)} \rho_0^2 |v|^2\,dxdt \right).
\end{array}
\end{equation}
\end{itemize} 
\end{prop}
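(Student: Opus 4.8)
The plan is to obtain the null controllability of the linear system \eqref{sistema2_linear} by the variational (penalization) method of Fursikov and Imanuvilov, exactly as in \cite{Thermistor_HNLP-23}, reading the Carleman estimate of Remark \ref{remark-c} as the coercivity of a suitable bilinear form and the observability of Proposition \ref{Observabillity} as the boundedness of a linear functional. Write $L^*$ for the adjoint operator in \eqref{sistema_adj}, so that a pair $(\phi,\psi)$ with terminal data generates $(F,G)=L^*(\phi,\psi)$. On the space $P_0$ of smooth pairs vanishing on $\Sigma_T$ I would introduce the symmetric bilinear form $a(\cdot,\cdot)$ whose associated quadratic form is exactly the right-hand side of Remark \ref{remark-c}, namely a weighted $L^2$-norm of $(F,G)$ over $Q_T$ plus a weighted $L^2$-norm of $\phi$ over $\omega\times(0,T)$. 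The Carleman estimate then reads $\overline{I}^{+}(\tau_1,\phi)+\overline{I}^{+}(\tau_2,\psi)\le C_1\,a\big((\phi,\psi),(\phi,\psi)\big)$, which makes $a$ an inner product on $P_0$; I let $P$ be its completion.

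Next I would check that the linear functional $\ell(\phi,\psi)=\iint_{Q_T}(f\phi+g\psi)\,dxdt+\int_\Omega\big(y_0\phi(0)+p_0\psi(0)\big)\,dx$ is continuous on $P$. The initial-data part is controlled by the observability inequality \eqref{ineq_Observ} of Proposition \ref{Observabillity}, whose right-hand side is one of the terms appearing in $a$; the source part is estimated by Cauchy--Schwarz against the weight $\rho$ of \eqref{eq:weights1}, using the hypothesis $(\rho f,\rho g)\in[L^2(Q_T)]^2$ and the fact that $\overline{I}^{+}$ dominates the corresponding $\rho^{-2}$-weighted norms of $(\phi,\psi)$. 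By the Lax--Milgram (Riesz) theorem there is a unique $(\hat\phi,\hat\psi)\in P$ with $a\big((\hat\phi,\hat\psi),(\phi,\psi)\big)=\ell(\phi,\psi)$ for all $(\phi,\psi)\in P$. I then define $(y,p)$ and $v$ as the weighted residuals dictated by the Euler--Lagrange relations: $y,p$ proportional to the weighted $\hat F,\hat G$ and $v$ proportional to the weighted $\hat\phi$ restricted to $\omega$. Testing the variational identity against arbitrary $(\phi,\psi)\in P_0$ shows that $(y,p)$ solves \eqref{sistema2_linear} driven by $v$ and that $y(\cdot,T)=p(\cdot,T)=0$, while the identity $a(\hat\phi,\hat\psi;\hat\phi,\hat\psi)=\ell(\hat\phi,\hat\psi)$ yields the finiteness in \eqref{eqa1-prop3}; here the weights $\rho_0,\rho_2$ of \eqref{eq:weights1}--\eqref{eq:weights2} are precisely the reciprocals of those defining $v,(y,p)$.

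Having the controlled trajectory, estimates \eqref{eqa2-prop3}, \eqref{eqb-prop3} and \eqref{eqc-prop3} are obtained by weighted parabolic energy estimates on the state system \eqref{sistema2_linear}, bootstrapping from the $L^2$-bound \eqref{eqa1-prop3}. For \eqref{eqb-prop3} I would multiply the two equations by $\rho_3^2 y$ and $\rho_3^2 p$, integrate over $Q_T$ and integrate by parts; the first-order coupling $2\sigma(0)\nabla p\cdot\nabla z^*$ and $\nabla\cdot(\sigma'(0)\nabla z^*\,y)$ is absorbed through Young's inequality and the regularity of $z^*$, while the terms produced by $(\rho_3^2)_t$ are dominated by the already-controlled $\rho_2^2$-integral of $(y,p)$ thanks to the comparison $\rho_{k+2}\,\rho_{k+2,t}\le C\rho_{k+1}^2$ in \eqref{eq:comp_weights}. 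For \eqref{eqc-prop3} I would instead test with $-\rho_4^2\Delta y$ and $-\rho_4^2\Delta p$ to gain the bounds on $\Delta y,\Delta p$ and, via the equations, on $y_t,p_t$, together with the $\sup_t$ control of $\|\nabla y\|,\|\nabla p\|$, again closing through \eqref{eq:comp_weights} and \eqref{eqb-prop3}. The derivative bound on $\rho_4 v$ in \eqref{eqa2-prop3} follows because $v$ is an explicit smooth weighted multiple of $\hat\phi$, whose time and second space derivatives are controlled by $\overline{I}^{+}(\tau_1,\hat\phi)$.

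The step I expect to be the main obstacle is the simultaneous calibration of the whole weight family $\rho,\rho_0,\dots,\rho_6$. Because the control acts on a single equation and the coupling occurs at orders zero and one, the exponents in \eqref{eq:weights1}--\eqref{eq:weights2} must be chosen so that, at the same time, (i) the quadratic form $a$ coincides with the Carleman right-hand side of Remark \ref{remark-c}, (ii) the source and observation terms in $\ell$ are dominated by the weight $\rho$ and by the $\omega$-term of $a$, and (iii) all the comparisons in \eqref{eq:comp_weights} hold, so that the successive energy estimates \eqref{eqa1-prop3}$\to$\eqref{eqb-prop3}$\to$\eqref{eqc-prop3} close without loss of powers of the blow-up weight. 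Verifying that a single choice of $\lambda=\lambda_1$ and $s=s_1$ makes all these compatibility conditions hold is the technical heart of the argument.
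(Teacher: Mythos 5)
Your proposal is correct and follows essentially the paper's own argument: the Fursikov--Imanuvilov variational scheme (a bilinear form on adjoint pairs, coercivity from the Carleman estimate of Remark \ref{remark-c}, boundedness of the linear functional from Proposition \ref{Observabillity}, Lax--Milgram, state and control recovered as weighted residuals, with the weighted integrability forcing $y(T)=p(T)=0$), followed by exactly the paper's multipliers $\rho_3^2 y,\ \rho_3^2 p$ for \eqref{eqb-prop3} and $-\rho_4^2\Delta y,\ -\rho_4^2\Delta p$ (the paper additionally tests with $\rho_4^2 y_t,\ \rho_4^2 p_t$) for \eqref{eqc-prop3}, closed through \eqref{eq:comp_weights}. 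The one genuine methodological difference is \eqref{eqa2-prop3}: you bound $(\rho_4 v)_t$ and $\Delta(\rho_4 v)$ directly by the Carleman norm of the minimizer, using that $\overline{I}^{+}(\tau_1,\cdot)$ controls time and second space derivatives; this is viable, but it silently requires the weight comparisons $(\rho_4\rho_0^{-2})^2\le C\,(s(\overline{\xi})^{+})^{\tau_1-1}e^{-5s\beta^{+}}$ and $|(\rho_4\rho_0^{-2})_t|^2\le C\lambda^4 (s(\overline{\xi})^{+})^{\tau_1+3}e^{-5s\beta^{+}}$, which do hold thanks to $16\beta^{-}<15\beta^{+}$ (Remark \ref{remark-c}) and $\tau^*\ge\max\{2\tau_2-\tau_1+7,\,1-\tau_1\}$. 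The paper instead sets $y^*=\rho_4\rho_0^{-2}\tilde y$, $p^*=\rho_4\rho_0^{-2}\tilde p$, writes the backward parabolic system these solve, with right-hand sides built from $(\rho_4\rho_0^{-2})_t\tilde y$ and $\rho_4\rho_0^{-2}\rho_2^2\hat y$, checks those lie in $L^2(Q_T)$ (which uses only the zero-order part of the Carleman estimate), and concludes by parabolic regularity; your route trades that regularity theorem for the first/second-order content of the Carleman inequality. One caution: the bilinear form cannot be taken to be ``exactly the right-hand side of Remark \ref{remark-c}'', whose weights are $\rho_0^{-2}$ on the residuals and $\rho_1^{-2}$ on the localized term; minimizing that particular quadratic form yields $v=-\rho_1^{-2}\hat\phi\, 1_\omega$ and hence only $\iint_{\omega\times(0,T)}\rho_1^2|v|^2<+\infty$, which is strictly weaker than \eqref{eqa1-prop3} because $\rho_0/\rho_1\to\infty$ as $t\to T^{-}$. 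As your closing remark about reciprocal weights indicates (and as the paper does in \eqref{p-extremal1}--\eqref{Lag0}), the form must carry $\rho_2^{-2}$ on the residuals and $\chi\rho_0^{-2}$ on the localized term, and the Carleman estimate is then invoked separately for positivity of the form and for the continuity of the functional; this mismatch between the form's weights and the Carleman weights is precisely the calibration point you yourself flagged as the technical heart of the argument.
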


\begin{proof}

\underline{Proof of a)}

Let us introduce the following constrained extremal problem:
\begin{equation}\label{p-extremal1}
\left\{\begin{array}{l}
\displaystyle \inf \left\{ \frac{1}{2}\left(\iint_{Q_T} {\rho}^{2}_{2}\left(|y|^{2}+|p|^{2}\right)\,dxdt+\iint_{\omega\times(0,T)}\rho^{2}_{0}|v|^{2}\,dxdt \right)\vspace{.3cm} \right\}\\
\mbox{subject to }\, v\in L^{2}(Q_T),\,\, \mathrm{supp}\, v\subset \omega\times(0,T)\,\, \mbox{and}\vspace{.3cm}\\

\left\{\begin{array}{ll}
    y_{t} -\kappa(0) \Delta y =2\sigma(0)\nabla z^*\cdot  \nabla p+\sigma'(0) |\nabla z^*|^{2}y+v 1_{\omega}+f, & \mbox{in}\,\,\, Q_T, \vspace{.1cm}\\

    \noalign{\smallskip} 
    p_t -\sigma(0)\Delta p-\nabla\cdot\left(\sigma'(0)\nabla z^* y\right)=g& \mbox{in}\,\,\,Q_T,\vspace{.1cm}\\
    \noalign{\smallskip} 
    y=0,\ \ \,p=0 &  \mbox{on}\,\,\,\Sigma_T,\\

    \noalign{\smallskip} 
    y(x,0)=y_{0}(x), \ \ p(x,0)=p_0(x) & \mbox{in}\,\,\,\Omega.
    \end{array}\right.
\end{array}\right.
\end{equation}
Let $\chi \in C^{\infty}_{0}(\omega),\,\, 0\leq \chi \leq 1,\,$ with $\chi|_{\omega_{0}}=1,$ we set
$$
\mathcal{P}_{0}=\Big\{(y,p)\in C^{\infty}_{0}(\overline{Q}_T)^{2}\ : \ y=0,\, p=0\, \  \mbox{on}\  \Sigma_T\Big\}
$$
and 
$$
\begin{array}{l}
    \displaystyle A\left((y,p);(\tilde{y},\tilde{p})\right) = \iint_{Q_T} {\rho}^{-2}_{2} \Big(L_1^{\ast}y + \sigma'(0)\nabla z^*\cdot \nabla p \Big) \Big(L_1^{\ast}\tilde{y} + \sigma'(0)\nabla z^*\cdot \nabla \tilde{p}  \Big)\, dxdt \vspace{.1cm}\\
				\noalign{\smallskip}\phantom{a\left((z,y);(\tilde{y},\tilde{z})\right)}
    \displaystyle +\iint_{Q_T} {\rho}^{-2}_{2} \Big(L_2^{\ast}p + 2\nabla\cdot(\sigma(0)\nabla z^* y)\Big) \Big(L_2^{\ast}p +2\nabla\cdot(\sigma(0)\nabla z^* \tilde{y})\Big)\, dxdt \vspace{.1cm}\\
    \noalign{\smallskip}\phantom{a\left((p,y);(\tilde{y},\tilde{p})\right)}
    \displaystyle +\iint_{\omega\times(0,T)}\chi \rho^{-2}_{0}y\tilde{y}\, dxdt, \ \  \,\, \forall\, (y,p),\,(\tilde{y},\tilde{p})\in \mathcal{P}_{0},
\end{array}
$$
where $L_1^{\ast}y=-y_{t}-\kappa(0)\Delta y-\sigma'(0)|\nabla z^*|^{2}y$\ \ and \ \  $L_2^{\ast}p = -p_t - \sigma(0) \Delta p$. 

Following the same  ideas as in \cite{Marin, Fursikov_Imanuvilov-96} and due to Proposition \ref{Prop2} we have that  $A(\cdot ; \cdot)$ defines an inner product.

Now, let us define $\mathcal{P}$ the completion of $\mathcal{P}_{0}$ with the inner product $A(\cdot;\cdot)$, that is,
$$|| (y, p) ||_{\mathcal{P}}^2 = A((y,p);(y,p)).$$
Let us define the linear functional $G:\mathcal{P} \to \mathbb{R}$ as
\begin{equation}\label{G-1}
G(y,p)=(y_{0},y(0))+(p_0,p(0))+\iint_{Q_T}(fy+gp)\, dxdt.
\end{equation}

By means of a simple calculation, we get
$$
\begin{array}{l}
G(y,p)\leq ||y_{0}|| ||y(0)|| + ||p_{0}|| ||p(0)||+||\rho f||_{L^{2}(Q_T)} ||\rho^{-1}y||_{L^{2}(Q_T)}+||\rho g||_{L^{2}(Q_T)}   ||\rho^{-1}p||_{L^{2}(Q_T)},
\end{array}
$$
and using the Carleman's inequality (Proposition \ref{Prop2} and Remark \ref{remark-c}) and  the Observability inequality (Proposition \ref{Observabillity}), we have
$$
G(y,p)\leq   C \Big(||y_{0}|| + ||p_{0}|| +||\rho f||_{L^{2}(Q_T)}+||\rho g||_{L^{2}(Q_T)} \Big)||(y,p)||_{\mathcal{P}},\,\, \forall\, (y,p)\in \mathcal{P}.
$$

Consequently $G$ is a bounded linear operator in $\mathcal{P}$. Then, in view of Lax-Milgram's Lemma, there exists one and only one $(\tilde{y},\tilde{p})$ satisfying
\begin{equation}\label{LML}
\left\{\begin{array}{l}
A((\tilde{y},\tilde{p});(y,p))=G(y,p),\,\,\forall (y,p)\in \mathcal{P}, \vspace{.1cm}\\
				\noalign{\smallskip}
(\tilde{y},\tilde{p})\in \mathcal{P}.
\end{array}\right.
\end{equation}

We finally get the existence of $(\hat{y},\hat{p})$, just setting
\begin{equation}\label{Lag0}
    \left\{
    \begin{array}{ll}
        \displaystyle \hat{y} =  {\rho}^{-2}_{2} \Big(L_1^{\ast}\tilde{y}-\sigma'(0)\nabla z^*\cdot \nabla\tilde{p}\Big),\,\, &\mbox{in}\,\, Q_T, \vspace{.1cm}\\
        \noalign{\smallskip} 
        \displaystyle \hat{p}=  {\rho}^{-2}_{2} \Big(L_2^{\ast} \tilde{p} + 2\nabla\cdot\left(\sigma(0)\nabla z^*\ \tilde{y}\right) \Big),\,\, &\mbox{in}\,\, Q_T, \vspace{.1cm}\\
        \noalign{\smallskip} 
        \hat{v}=-\rho^{-2}_{0}\tilde{y}\ 1_\omega,\,\, &\mbox{in}\,\, Q_T, \vspace{.1cm}\\
        \noalign{\smallskip} 
        \hat{y}=\hat{p}=0\,\, &\mbox{on}\,\,\Sigma_T.
    \end{array}
    \right.
\end{equation}

We see that $(\hat{y},\hat{p},\hat{v})$ solves \eqref{sistema2_linear} and since $(\hat{y},\hat{p})\in \mathcal{P}$ we have
$$
    \iint_{Q_T} {\rho}^{2}_{2} (|\hat{y}|^{2} + |\hat{p}|^{2})\,dxdt+\iint_{\omega\times(0,T)}\rho^{2}_{0}|\hat{v}|^{2}\,dxdt<+\infty.
$$
Furthermore, by Carleman's inequality (Remark \ref{remark-c}) we have
\begin{align}\label{Carl_y_p}
    \overline{I}^+(\tau_1, \tilde{y}) + \overline{I}^+(\tau_2, \tilde{p}) &\leq C \left( \iint_{Q_T} \rho_0^{-2} \vert L_1^{\ast}\tilde{y}-\sigma'(0)\nabla z^*\cdot \nabla\tilde{p} \vert^2 dxdt   \right. \nonumber\\ 
        &\phantom{A}+ \iint_{Q_T} \rho_0^{-2} \vert L_2^{\ast} \tilde{p} + 2\nabla \cdot (\sigma(0)\nabla z^* \tilde{y}) \vert^2  dx dt \\ 
        &\phantom{A}+ \left.  \iint_{\omega \times (0,T)} \rho_1^{-2}\vert \tilde{y} \vert^2 dxdt \right) \nonumber\\
        &\leq C \left( \iint_{Q_T} {\rho}_2^2 (|\hat{y}|^2 + |\hat{p}|^2) dxdt + \iint_{\omega \times (0,T)} {\rho}_0^{2}\vert \hat{v} \vert^2 dxdt \right). \nonumber
\end{align}    
For the functions $y^{\ast}= {\rho}_4 (\rho_0^{-2} \tilde{y})$ and $p^{\ast} = {\rho}_4 (\rho_0^{-2} \tilde{p})$, let us compute 
\begin{align*}
    L_1^{\ast}y^* &= -y^*_t - \kappa(0) \Delta y^* - \sigma'(0) |\nabla z^*|^2 y^*\\
    &= -({\rho}_4 \rho^{-2}_{0}\tilde{y})_t - \kappa(0) \Delta ({\rho}_4 \rho^{-2}_{0}\tilde{y}) - \sigma'(0) |\nabla z^*|^2 ({\rho}_4 \rho^{-2}_{0}\tilde{y})\\
    &= -({\rho}_4 \rho^{-2}_{0})_t\ \tilde{y} - {\rho}_4 \rho^{-2}_{0} \tilde{y}_t - \kappa(0) {\rho}_4 \rho^{-2}_{0} \Delta \tilde{y} - \sigma'(0) |\nabla z^*|^2 ({\rho}_4 \rho^{-2}_{0}\tilde{y})\\
    &= -({\rho}_4 \rho^{-2}_{0})_t\ \tilde{y} + {\rho}_4 \rho^{-2}_{0} L_1^* \tilde{y} 
\end{align*}
\begin{align*}
    \sigma'(0) \nabla z^* \cdot \nabla p^* = \sigma'(0) \nabla z^* \cdot \nabla  ({\rho}_4 (\rho_0^{-2} \tilde{p})) = {\rho}_4 \rho_0^{-2} (\sigma'(0) \nabla z^* \cdot \nabla   \tilde{p}).
\end{align*}
Then,
\begin{align*}
    f^* &:=L_1^{\ast}(y^*) - \sigma'(0) \nabla z^* \cdot \nabla p^*\\ 
        &= -({\rho}_4 \rho^{-2}_{0})_t\ \tilde{y} + {\rho}_4 \rho^{-2}_{0} L_1^* \tilde{y} - {\rho}_4 \rho_0^{-2} (\sigma'(0) \nabla z^* \cdot \nabla   \tilde{p})\\
        &= -({\rho}_4 \rho^{-2}_{0})_t\ \tilde{y} + {\rho}_4 \rho^{-2}_{0} \Big(L_1^* \tilde{y} - \sigma'(0) \nabla z^* \cdot \nabla   \tilde{p}\Big)\\
        &= -({\rho}_4 \rho^{-2}_{0})_t\ \tilde{y} + {\rho}_4 \rho^{-2}_{0} ({\rho}_2^2 \hat{y}).
\end{align*}
Analogously,
\begin{align*}
    L_2^*p^* = -p^*_t - \sigma(0) \Delta p^* 
    = -({\rho}_4 \rho_0^{-2})_t\ \tilde{p} + {\rho}_4 \rho_0^{-2} L_2^* \tilde{p},
\end{align*}
\begin{align*}
    \nabla \cdot (\sigma(0)\nabla z^* y^*) = \nabla \cdot (\sigma(0)\nabla z^* ({\rho}_4 \rho_0^{-2} \tilde{y})) = {\rho}_4 \rho_0^{-2}\ \nabla \cdot( \sigma(0) \nabla z^* \tilde{y}).
\end{align*}
Then
\begin{align*}
    g^* := L_2^*p^* + 2 \nabla \cdot (\sigma(0)\nabla z^* y^*)  
    = -({\rho}_4 \rho_0^{-2})_t\ \tilde{p} + {\rho}_4 \rho_0^{-2}({\rho}_2^2 \hat{p}).
\end{align*}
We have that $(y^{\ast},p^{\ast})$ solves the following system 
$$\left\{
    \begin{array}{lll}
        L_1^{\ast} y^{\ast} - \sigma'(0)\nabla z^* \cdot\nabla p^{\ast} = f^{\ast} &\mbox{in} &Q_T, \vspace{.1cm}\\
        L_2^*p^* + 2 \nabla \cdot (\sigma(0)\nabla z^* y^*) = g^{\ast} &\mbox{in} &Q_T, \vspace{.1cm}\\
        \noalign{\smallskip} 
        y^{\ast}=0, \  p^{\ast}=0 &\mbox{on} &\Sigma_T, \vspace{.1cm}\\
        \noalign{\smallskip} 
        y^{\ast}(x,T)=0, \  p^{\ast}(x,T)=0 & \mbox{in} &\Omega.
    \end{array}
\right.
$$
By a simple computation
\begin{align*}    
    {\rho}_4 \rho_0^{-2} \rho_2^2 
    &\leq C \rho_2 ({\rho_2}^{-2}) \rho_2^2 = C \rho_2.
\end{align*}
Then 
$$||{\rho}_4 \rho_0^{-2} \rho_2^2 \hat{y}||_{L^2(Q_T)} \leq C || \rho_2 \hat{y}||_{L^2(Q_T)} < +\infty,$$
$$||{\rho}_4 \rho_0^{-2} \rho_2^2 \hat{p}||_{L^2(Q_T)} \leq C || \rho_2 \hat{p}||_{L^2(Q_T)} < +\infty.$$
Furthermore,
\begin{align*}    
    |({\rho}_4 \rho_0^{-2} )_t| &= \left|\Big(e^{s \overline{\hat{\alpha}}} ((\overline{\xi})^+)^{-\frac{\tau^*}{2}-6} \ e^{-4s \beta^+} ((\overline{\xi})^+)^{\tau^*+3} \Big)_t \right|\\
                &= \left| \left( e^{-s(4\beta^+ - \overline{\hat{\alpha}})} ((\overline{\xi})^+)^{\frac{\tau^*}{2}-3} \right)_t \right| < C e^{-\frac{5}{2}s \beta^+} ((\overline{\xi})^+)^{\frac{-\tau^*+3}{2}}.
\end{align*}

Then, by \eqref{Carl_y_p} 
$$
    ||({\rho}_4 \rho_0^{-2} )_t \ \tilde{y}||_{L^2(Q_T)} \leq C \left( \iint_{Q_T} {\rho}_2^2 (|\hat{y}|^2 + |\hat{p}|^2) dxdt + \iint_{\omega \times (0,T)} {\rho}_0^{2}\vert \hat{v} \vert^2 dxdt \right),
$$
$$
    ||({\rho}_4 \rho_0^{-2} )_t \ \tilde{p}||_{L^2(Q_T)} \leq C \left( \iint_{Q_T} {\rho}_2^2 (|\hat{y}|^2 + |\hat{p}|^2) dxdt + \iint_{\omega \times (0,T)} {\rho}_0^{2}\vert \hat{v} \vert^2 dxdt \right).
$$
So, $f^{\ast}, g^{\ast} \in L^{2}(Q_T)$. Then, by regularity results in parabolic systems, we have
$$ y^{\ast}, p^{\ast}\in L^{2}(0,T;H^{2}(\Omega)),\,\, y^{\ast}_t, p^{\ast}_{t}\in L^{2}(0,T;L^{2}(\Omega)).$$
As $y^* 1_\omega =  -\rho_4 \hat{v} 1_{\omega}$, in particular
$${\rho}_{4}\hat{v} 1_{\omega}\in L^{2}(0,T;H^{2}(\Omega)),\,\, ({\rho}_{4}\hat{v})_{t} 1_{\omega} \in L^{2}(0,T;L^{2}(\Omega)),$$
with
$$
	\displaystyle \iint_{\omega \times (0,T)} (|({\rho}_4 v)_t|^2 + |\Delta({\rho}_4 v)|^2)\,dxdt \leq C  \left( \iint_{Q_T} {\rho}_2^2 (|\hat{y}|^2 + |\hat{p}|^2) dxdt + \iint_{\omega \times (0,T)} {\rho}_0^{2}\vert \hat{v} \vert^2 dxdt \right).
$$

\underline{Proof of b)}

Multiplying $\rho_3^2 y$ and $\rho_3^2 p$ the first and second equations of the linear system \eqref{sistema2_linear} and integrating in $Q_T$. Here we use \eqref{eq:comp_weights}, specifically:   $\rho_3 \rho_{3,t} \leq C \rho_2^2$. \\

\underline{Proof of c)}

Multiplying $-\rho_4^2 \Delta y$ and $-\rho_4^2 \Delta p$ the first and second equations of the linear system \eqref{sistema2_linear} and integrating in $Q_T$. Then, multiplying $\rho_4^2 y_t$ and $\rho_4^2 p_t$ at first and second equations of the linear system \eqref{sistema2_linear} and integrating in $Q_T$. Here we use \eqref{eq:comp_weights}, specifically: $\rho_4 \rho_{4,t} \leq C \rho_3^2$.

\end{proof}

\begin{prop}\label{Prop4}
Assuming the hypothesis in Proposition \ref{Prop3} and additionally assuming that $(y_0, p_0) \in [H^3(\Omega)]^2$, $({\rho}_5 f_t, \rho_5 g_t) \in~[L^2(Q_T)]^2$ and $(f(0), g(0)) \in [H_0^1(\Omega)]^2$, then the associated state $(y,p)$ satisfy
\begin{equation}
\label{eq1-Prop4}
\begin{array}{l}
 \displaystyle \iint_{Q_T} {\rho}^{2}_{6} (|y_{tt}|^{2}+|\Delta y_{t}|^{2}+|\nabla y_{t}|^{2}) \,dx\,dt +\underset{t\in [0,T]}{\sup} \Big( {\rho}^{2}_{5}(t) ||\Delta y(t)||^2 + {\rho}^{2}_{6}(t) ||\nabla y_{t}(t)||^2 \Big)  \\
\noalign{\smallskip} 
\displaystyle + \iint_{Q_T} {\rho}^{2}_{6} (|p_{tt}|^{2}+|\Delta p_{t}|^{2}+|\nabla p_{t}|^{2}) \,dx\,dt +\underset{t\in [0,T]}{\sup} \Big( {\rho}^{2}_{5}(t) ||\Delta p(t)||^2 + {\rho}^{2}_{6}(t) ||\nabla p_{t}(t)||^2 \Big)  \\ 
\noalign{\smallskip} 
 \phantom{DDDDSSSSS}
 \displaystyle \leq C \left( ||y_0||_{H^3(\Omega)}^2 + ||p_0||_{H^3(\Omega)}^2 + \iint_{Q_T} \rho_2^2 (|y|^2+|p|^{2}) \,dx\,dt \right.\\
\noalign{\smallskip} 
 \displaystyle \phantom{DDDDSSSSSSS} +\iint_{Q_T} \Big(\rho^2 (|f|^2 + |g|^2) +  {\rho}^{2}_{5} (|f_{t}|^{2} + |g_t|^2) \Big)\,dx\,dt \\
 \noalign{\smallskip} 
 \displaystyle \phantom{DDDDSSSSSSS} \left. +||f(0)||_{H^1_0(\Omega)}^2 + ||g(0)||_{H^1_0(\Omega)}^2 +\iint_{\omega\times(0,T)}  {\rho}^{2}_{0}|v|^{2}  \,dx\,dt \right). \\
\end{array}
\end{equation}
\end{prop}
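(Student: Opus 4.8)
The plan is to differentiate the linearized system \eqref{sistema2_linear} in time and to run the same weighted energy estimates used in Proposition \ref{Prop3} b)--c), now for the pair $(Y,P):=(y_t,p_t)$ and with the slower-growing weights $\rho_5,\rho_6$ in place of $\rho_3,\rho_4$. Applying $\partial_t$ to the two equations, $(Y,P)$ solves a coupled parabolic system of exactly the same structure as \eqref{sistema2_linear},
$$Y_t - \kappa(0)\Delta Y = 2\sigma(0)\nabla P\cdot\nabla z^* + \sigma'(0)|\nabla z^*|^2 Y + \tilde f + v_t\,1_\omega, \qquad P_t - \sigma(0)\Delta P - \nabla\cdot(\sigma'(0)\nabla z^*\,Y) = \tilde g,$$
where $\tilde f$ gathers $f_t$ together with the lower-order terms created by differentiating the $z^*$-coefficients (namely $2\sigma(0)\nabla p\cdot\nabla z^*_t$ and $2\sigma'(0)(\nabla z^*\cdot\nabla z^*_t)\,y$), and $\tilde g$ gathers $g_t$ and $\nabla\cdot(\sigma'(0)\nabla z^*_t\,y)$. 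By \eqref{Hyp7} all these coefficients lie in $L^\infty(Q_T)$, so the new sources are built from the quantities $y,p,\nabla y,\nabla p$ which, under the weight $\rho_6\le C\rho_5\le\dots\le C\rho_2$, are already controlled by the right-hand side of \eqref{eqc-prop3}.

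First I would establish the $\rho_5$-level estimate: multiply the two differentiated equations by $\rho_5^2 Y$ and $\rho_5^2 P$, add, and integrate over $Q_T$, exactly as in Proposition \ref{Prop3} b). The weight-differentiation term $\rho_5\rho_{5,t}\|Y\|^2$ is absorbed through $\rho_5\rho_{5,t}\le C\rho_4^2$ from \eqref{eq:comp_weights} together with the bound $\iint\rho_4^2(|y_t|^2+|p_t|^2)$ supplied by \eqref{eqc-prop3}; the coupling terms $\nabla P\cdot\nabla z^*$ and $\nabla\cdot(\nabla z^*\,Y)$ are treated by adding the two identities and closing with Grönwall's inequality, precisely as for \eqref{eqc-prop3}. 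This yields $\sup_t\rho_5^2(\|y_t\|^2+\|p_t\|^2)+\iint\rho_5^2(|\nabla y_t|^2+|\nabla p_t|^2)$. The term $\sup_t\rho_5^2\|\Delta y\|^2$ (and its $p$-analogue) then follows from elliptic regularity applied to the \emph{original} equations, writing $\kappa(0)\Delta y=y_t-(\text{lower order})$ and $\sigma(0)\Delta p=p_t-(\text{lower order})$ and invoking the $\rho_5$-bound on $(y_t,p_t)$ just obtained together with the Proposition \ref{Prop3} bounds. Next I would pass to the $\rho_6$-level by multiplying the differentiated equations by $-\rho_6^2\Delta Y,\,-\rho_6^2\Delta P$ and then by $\rho_6^2 Y_t,\,\rho_6^2 P_t$, mirroring Proposition \ref{Prop3} c). Here the weight-derivative terms $\rho_6\rho_{6,t}\|\nabla Y\|^2$ are absorbed via $\rho_6\rho_{6,t}\le C\rho_5^2$ and the $\iint\rho_5^2|\nabla y_t|^2$ bound of the previous step, producing $\sup_t\rho_6^2(\|\nabla y_t\|^2+\|\nabla p_t\|^2)$ together with $\iint\rho_6^2(|y_{tt}|^2+|\Delta y_t|^2+|p_{tt}|^2+|\Delta p_t|^2)$.

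The boundary-in-time contributions from the integrations by parts, $\rho_5^2(0)\|y_t(0)\|^2$ and $\rho_6^2(0)\|\nabla y_t(0)\|^2$ (and their $p$-counterparts), are evaluated from the equations at $t=0$: $y_t(0)=\kappa(0)\Delta y_0+2\sigma(0)\nabla p_0\cdot\nabla z^*(0)+\sigma'(0)|\nabla z^*(0)|^2 y_0+f(0)+v(0)1_\omega$, and analogously for $p_t(0)$. This is exactly where the stronger hypotheses $(y_0,p_0)\in[H^3(\Omega)]^2$ and $(f(0),g(0))\in[H^1_0(\Omega)]^2$ enter, since $\|\nabla y_t(0)\|$ demands $\Delta y_0\in H^1$ and $\nabla f(0)\in L^2$; the contribution of the control at $t=0$ is finite because $\rho_4 v\in C([0,T];H^1(\omega))$, which follows from $\rho_4 v\in L^2(0,T;H^2)$ and $(\rho_4 v)_t\in L^2(0,T;L^2)$ established in Proposition \ref{Prop3} a).

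The main obstacle is the control term $v_t\,1_\omega$, which is not a priori regular since $v$ is produced by the duality construction of Proposition \ref{Prop3}; this is precisely the reason part a) was proved. Writing $\rho_6 v_t=\tfrac{\rho_6}{\rho_4}\bigl((\rho_4 v)_t-\rho_{4,t}\,v\bigr)$ and using $\rho_6\le C\rho_4$ together with the comparison relations in \eqref{eq:comp_weights}, the source $v_t\,1_\omega$ is controlled in the $\rho_6$-weighted norm by the two bounds $\iint_\omega|(\rho_4 v)_t|^2<\infty$ and $\iint_\omega\rho_0^2|v|^2<\infty$ from \eqref{eqa1-prop3}--\eqref{eqa2-prop3}, i.e.\ by the right-hand side of \eqref{eq1-Prop4}. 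The remaining difficulty is purely the bookkeeping of the telescoping weights \eqref{eq:comp_weights}: one must perform the $\rho_5$-step before the $\rho_6$-step so that every weight-derivative factor and every coupling term lands on a quantity already bounded at the previous level, which is what makes the whole chain of estimates close and delivers \eqref{eq1-Prop4}.
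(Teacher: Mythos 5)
Your proposal follows essentially the same route as the paper: differentiate \eqref{sistema2_linear} in time, run weighted energy estimates on $(y_t,p_t)$ with the multipliers $\rho_5^2 y_t,\ \rho_5^2 p_t$, then $-\rho_6^2\Delta y_t,\ -\rho_6^2\Delta p_t$ and $\rho_6^2 y_{tt},\ \rho_6^2 p_{tt}$, absorb the weight-derivative terms through \eqref{eq:comp_weights} and the bounds of Proposition \ref{Prop3}, recover $\|\nabla y_t(0)\|,\|\nabla p_t(0)\|$ from the equations at $t=0$ (which is exactly where $(y_0,p_0)\in[H^3(\Omega)]^2$, $(f(0),g(0))\in[H_0^1(\Omega)]^2$ and the trace regularity $\rho_4 v 1_\omega\in C([0,T];H^1)$ from part a) enter), and control $v_t 1_\omega$ via \eqref{eqa1-prop3}--\eqref{eqa2-prop3}; all of this matches the paper's proof step by step. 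The one place you deviate is the term $\sup_t\rho_5^2(t)\|\Delta y(t)\|^2$ (and its $p$-analogue): the paper obtains it by one further energy estimate, multiplying the \emph{original} system \eqref{sistema2_linear} by $-\rho_5^2\Delta y_t$ and $-\rho_5^2\Delta p_t$ and integrating in $Q_T$, whereas you invoke elliptic regularity pointwise in $t$. Your variant is workable but carries a requirement you did not address: the ``lower order'' part of the elliptic identity contains $f$ (resp.\ $g$) and $v1_\omega$, so you need sup-in-time bounds $\sup_t\rho_5(t)\big(\|f(t)\|+\|g(t)\|+\|v(t)1_\omega\|\big)<\infty$, not merely the integral bounds you cite. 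These do follow from the hypotheses: for $v$ it is again the continuity of $\rho_4 v$ given by part a), and for $f,g$ one checks that $\rho_{5,t}\leq C\rho_4^2/\rho_5\leq C\rho$, so $\rho f,\ \rho_5 f_t\in L^2(Q_T)$ give $\rho_5 f\in H^1(0,T;L^2(\Omega))\hookrightarrow C([0,T];L^2(\Omega))$. With that observation supplied, your argument closes and yields \eqref{eq1-Prop4}; the paper's extra multiplication is precisely what lets it avoid any pointwise-in-time control of the data.
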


\begin{proof}
In order to prove the estimate \eqref{eq1-Prop4}, let us derivative the first and second equation in \eqref{sistema2_linear} with respect to time variable $t$ in this way we have
\begin{equation}\label{est-ad1}
\begin{array}{l}
\displaystyle 
y_{tt}-\kappa(0)\Delta y_{t}=2\sigma(0)\nabla z_{0}\cdot\nabla p_{t}+2\left(\sigma(0)\nabla z_{0}\right)_{t}\cdot \nabla p+\left(\sigma(0)|\nabla z_{0}|^{2}y\right)_{t}+v_{t}\chi_{\omega}+f_{t}\\
\end{array}
\end{equation}
\begin{equation}\label{est-ad2}
\begin{array}{l}
p_{tt} - \sigma(0)\Delta p_{t}-\nabla\cdot\left( (\sigma'(0)\nabla z_{0})_{t}y\right)-\nabla\cdot\left( (\sigma'(0)\nabla z_{0})y_{t}\right)=g_{t}.
\end{array}
\end{equation}
Multiplying in  \eqref{est-ad1} by $\rho_5^2 y_t$ and in \eqref{est-ad2} by ${\rho}^{2}_{5} p_t$ and integrating in $Q_T$ and from Proposition \ref{Prop3}, we have
\begin{equation}\label{est-ad3}
    \begin{array}{lll}
        &\displaystyle \sup_{t\in[0,T]} \Big( {\rho}^{2}_{5}(t) (|| y_t(t)||^{2} + || p_t(t)||^{2}  ) \Big) + \iint_{Q_T} \rho_5^2 (|\nabla y_t|^2 + |\nabla p_t|^2) dxdt\\
        &\phantom{AAA}\displaystyle\leq C \left( || y_0 ||_{H^2(\Omega)}^2 + || p_0 ||_{H^2(\Omega)}^2 + \iint_{Q_T}  {\rho}^{2}_{5} (|f_{t}|^{2} + |g_t|^2) \,dx\,dt + (RS) \right),
    \end{array}
\end{equation}
here we use \eqref{eq:comp_weights}, specifically: $\rho_5 \rho_{5,t} \leq C \rho_4^2$ and $(RS)$ is the right side in \eqref{eqc-prop3}.

Multiplying in  \eqref{est-ad1} by $-\rho_6^2 \Delta y_t$ and in \eqref{est-ad2} by $-{\rho}^{2}_{5} \Delta p_t$ and integrating in $Q_T$ and from Proposition \ref{Prop3}, we have
\begin{equation}\label{est-ad4}
    \begin{array}{lll}
        &\displaystyle \sup_{t\in[0,T]} \Big( {\rho}^{2}_{6}(t) (|| \nabla y_t(t)||^{2} + || \nabla p_t(t)||^{2}  ) \Big) + \iint_{Q_T} \rho_6^2 (|\Delta y_t|^2 + |\Delta p_t|^2) dxdt\\
        &\phantom{AAAAAAAAAAAAAAAAAA}\leq C \Big( || \nabla y_t(0) ||^2 + || \nabla p_t(0) ||^2 +  (RS) \Big),
    \end{array}
\end{equation}
here we use \eqref{eq:comp_weights}, specifically: $\rho_6 \rho_{6,t} \leq C \rho_5^2$ and $(RS)$ is the right side in \eqref{est-ad3}.

Multiplying in  \eqref{est-ad1} by $\rho_6^2 y_{tt}$ and in \eqref{est-ad2} by ${\rho}^{2}_{6} p_{tt}$ and integrating in $Q_T$ and from Proposition \ref{Prop3}, we have
\begin{equation}\label{est-ad5}
    \begin{array}{lll}
        &\displaystyle \sup_{t\in[0,T]} \Big( {\rho}^{2}_{6}(t) (|| \nabla y_t(t)||^{2} + || \nabla p_t(t)||^{2}  ) \Big) + \iint_{Q_T} \rho_6^2 (|y_{tt}|^2 + |p_{tt}|^2) dxdt\\
        &\phantom{AAAAAAAAAAAAA}\leq C \Big( || \nabla y_t(0) ||^2 + || \nabla p_t(0) ||^2 +  (RS) \Big),
    \end{array}
\end{equation}
where we used \eqref{eq:comp_weights}, specifically: $\rho_6 \rho_{6,t} \leq C \rho_5^2$ and $(RS)$ is the right side in \eqref{est-ad3}.

For initial data estimates in \eqref{est-ad4} and \eqref{est-ad5}:
$$|| \nabla y_t(0) || + || \nabla p_t(0) || \leq C \Big( || y_0 ||_{H^3(\Omega)} + || p_0 ||_{H^3(\Omega)} + || \nabla v(0) 1_\omega || + ||f(0) ||_{H_0^1(\Omega)} + || g(0) ||_{H_0^1(\Omega)}\Big).$$
By Proposition \ref{Prop3}, we have $v 1_\omega \in \Big\{ u \in L^2(0,T/2;H^2(\Omega)) \ : \ u_t \in L^2(0,T/2;L^2(\Omega)) \Big\}$. So $v 1_\omega \in C([0,T/2];H^1(\Omega))$ and
$$|| \nabla v(0) 1_\omega || \leq C \left( \iint_{Q_T} \rho^2 (|f|^2 + |g|^2)\ dxdt + \iint_{\omega \times (0,T)} \rho_0^2 |v|^2\ dxdt  \right).$$

Finally, multiplying in \eqref{sistema2_linear} by $-{\rho}_{5}^2 \Delta y_t$ and $-{\rho}_{5}^2 \Delta p_t$ and integrating in $Q_T$, we have
\begin{equation}
    \begin{array}{lll}
    \displaystyle  \iint_{Q_T} {\rho}^{2}_{5 }\left(|\nabla y_{t}|^{2}+|\nabla p_{t}|^{2}\right)\,dxdt &+  \displaystyle\sup_{t\in[0,T]}\Big( {\rho}^{2}_{5}(t) (||\Delta y(t)||^{2} + ||\Delta p(t)||^{2}) \Big)\\
        &\leq C \Big( || y_0 ||_{H^2(\Omega)}^2 + || p_0 ||_{H^2(\Omega)}^2 +  (RS) \Big),
    \end{array}
\end{equation}
here we use \eqref{eq:comp_weights}, specifically: $\rho_5 \rho_{5,t} \leq C \rho_4^2$ and $(RS)$ is the right side in \eqref{eqc-prop3}.
\end{proof}

\subsection{Definition of map \texorpdfstring{$\mathcal{A}$}{A} and the Banach spaces \texorpdfstring{$\mathcal{Y}$}{Y} and \texorpdfstring{$\mathcal{Z}$}{Z}}

We are interested in using Theorem \ref{Liusternik}  (Liusternik's Theorem) to obtain our result. 

In order to do so, we define a map $\mathcal{A} : \mathcal{Y} \to \mathcal{Z}$ between suitable Banach spaces $\mathcal{Y}$ and $\mathcal{Z}$ whose definition and properties came from the controllability result of the linearized system and the additional estimates shown in Proposition \ref{Prop3}.

Denoting
\begin{align*}
    \mathcal{L}_1(y,p) & = y_t - \nabla \cdot (\kappa(0) \nabla y) -  2\sigma(0) \nabla p \cdot \nabla z^* - \sigma'(0) y |\nabla z^*|^2 - v 1_\omega,\\
    \mathcal{L}_2(y,p) & = p_t - \nabla \cdot (\sigma(0) \nabla p) - \nabla \cdot ((\sigma'(0) \nabla z^*)y),  
\end{align*}
let us define the space
\begin{equation}
\label{eq:espaceY}
\begin{array}{lll}
\mathcal{Y} &= \Big\{  (y,p,v)\in [L^{2}(Q_T)]^{2}\times L^{2}( \omega \times (0,T)) \ : 
\ y=p=0 \text{ on } \Sigma_T,  \\
&\phantom{AAA}\rho_{0} v\in L^{2}( \omega \times (0,T)), \ \rho_{2}y, \ \rho_{2}p \in L^2(Q_T),  \\
&\phantom{AAA}\rho \mathcal{L}_1(y,p), \ \rho \mathcal{L}_2(y,p) \in L^2(Q_T), \
 \rho_5 \mathcal{L}_1(y,p)_t , \ \rho_5 \mathcal{L}_2(y,p)_t \in L^2(Q_T),  \\
&\phantom{AAA}(\mathcal{L}_1(y,p)(0), \ \mathcal{L}_2(y,p)(0)) \in [H_0^1(\Omega)]^2, \ (y(0), \ p(0)) \in [H^3(\Omega) \cap H^1_0(\Omega)]^2 \Big\}.
\end{array}
\end{equation}

Thus, $\mathcal{Y}$ is a Hilbert space for the norm $\Vert \cdot \Vert_{\mathcal{Y}}$, where
\begin{equation*}
    \begin{array}{lll}
          \Vert (y,p,v)\Vert^{2}_{\mathcal{Y}} &=& \Vert \rho_{2}y\Vert^{2}_{L^{2}(Q_T)} + \Vert \rho_{2}p\Vert^{2}_{L^{2}(Q_T)} + \Vert\rho_{0} v\Vert^{2}_{L^{2}(\omega\times (0,T))} +\Vert\rho \mathcal{L}_1(y,p)\Vert^{2}_{L^{2}(Q_T)} \\
          && + \Vert\rho \mathcal{L}_2(y,p)\Vert^{2}_{L^{2}(Q_T)}
          +\Vert\rho_5 \mathcal{L}_1(y,p)_t\Vert^{2}_{L^{2}(Q_T)} + \Vert\rho_5 \mathcal{L}_2(y,p)_t \Vert^{2}_{L^{2}(Q_T)} \\
          &&+ \Vert y(0)\Vert^{2}_{H^{3}(\Omega)} + \Vert p(0)\Vert^{2}_{H^{3}(\Omega)}.
    \end{array}
\end{equation*}

Now, let us introduce the Banach space $\mathcal{Z} = \mathcal{F}^2  \times [H^3(\Omega) \cap H_0^1(\Omega)]^2$ such that 
$$\mathcal{F}=\Big\{ h \in L^2(Q_T) \ : \ \rho h \in L^2(Q_T), 
\ \rho_5 h_t \in L^2(Q_T), \ h(0) \in H_0^1(\Omega)
\Big\},$$
with the norm
\begin{equation*}
\begin{split}
    \|(f,g,y_0,p_0)\|_\mathcal{Z}^2 = &\|\rho f\|_{L^2(Q_T)}^2 + \|\rho g\|_{L^2(Q_T)}^2 + \| \rho_5 f_t\|_{L^2(Q_T)}^2 + \| \rho_5 g_t\|_{L^2(Q_T)}^2  \\
    &+ \|f(0)\|_{H_0^1(\Omega)} + \|g(0)\|_{H_0^1(\Omega)}+ \|y_0\|_{H^3(\Omega)} + \|p_0\|_{H^3(\Omega)}.    
\end{split}    
\end{equation*}
Finally, consider the map $\mathcal{A} : \mathcal{Y} \to \mathcal{Z}$ such that
$$\mathcal{A}(y,p,v) = (\mathcal{A}_0, \mathcal{A}_1, \mathcal{A}_2, \mathcal{A}_3)(y,p,v), \ \forall (y,p,v) \in \mathcal{Y},$$ 
where the components $\mathcal{A}_i$, $i=0,1,2,3$, are given by
\begin{equation}\label{aplicação A}
    \left\{
    \begin{array}{ll}
        \mathcal{A}_0(y,p,v) &= y_t - \nabla \cdot (\kappa(y) \nabla y) - \sigma(y) |\nabla p|^2 - 2\sigma(y) \nabla p \cdot \nabla z^*\\
        &\ \ \ \ - (\sigma(y)-\sigma(0))|\nabla z^*|^2  + v 1_\omega,\\
        \mathcal{A}_1(y,p,v) &= p_t - \nabla \cdot (\sigma(y) \nabla p) - \nabla \cdot ((\sigma(y)-\sigma(0)) \nabla z^*), \\
        \mathcal{A}_2(y,p,v) &= y(\cdot,0),\\
        \mathcal{A}_3(y,p,v) &= p(\cdot,0).
    \end{array}
    \right.
\end{equation}

\subsection{Hypotheses of Liusternik's Theorem}
We prove that we can apply Theorem \ref{Liusternik} to the mapping $\mathcal{A}$ defined in \eqref{aplicação A} through the following three lemmas:

\begin{lemma}\label{A bem definido}
    The mapping $\mathcal{A}: \mathcal{Y}\rightarrow  \mathcal{Z}$ is well defined and continuous. 
\end{lemma}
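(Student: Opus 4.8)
The plan is to check the three conditions encoded in $\mathcal{Z}=\mathcal{F}^2\times[H^3(\Omega)\cap H_0^1(\Omega)]^2$: that $\mathcal{A}_0(y,p,v),\mathcal{A}_1(y,p,v)\in\mathcal{F}$ and that $\mathcal{A}_2(y,p,v)=y(\cdot,0)$, $\mathcal{A}_3(y,p,v)=p(\cdot,0)$ lie in $H^3(\Omega)\cap H_0^1(\Omega)$. The last two are immediate and continuous, since membership of $(y(0),p(0))$ in $[H^3\cap H_0^1]^2$ is part of the definition of $\mathcal{Y}$ and already appears in $\|\cdot\|_{\mathcal{Y}}$. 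For the first two I would write $\mathcal{A}_0=\mathcal{L}_1+\mathcal{N}_0$ and $\mathcal{A}_1=\mathcal{L}_2+\mathcal{N}_1$, where $\mathcal{L}_1,\mathcal{L}_2$ are the linear operators of \eqref{sistema2_linear} and the remainders are the (at least) quadratic terms
\begin{equation*}
\begin{split}
\mathcal{N}_0 &= -(\kappa(y)-\kappa(0))\Delta y - \kappa'(y)|\nabla y|^2 - \sigma(y)|\nabla p|^2 \\
&\quad - 2(\sigma(y)-\sigma(0))\,\nabla p\cdot\nabla z^* - \big(\sigma(y)-\sigma(0)-\sigma'(0)y\big)|\nabla z^*|^2,\\
\mathcal{N}_1 &= -(\sigma(y)-\sigma(0))\Delta p - \sigma'(y)\nabla y\cdot\nabla p \\
&\quad - (\sigma'(y)-\sigma'(0))\nabla y\cdot\nabla z^* - \big(\sigma(y)-\sigma(0)-\sigma'(0)y\big)\Delta z^*.
\end{split}
\end{equation*}
The linear parts $\mathcal{L}_1,\mathcal{L}_2$ belong to $\mathcal{F}$ directly, because the requirements $\rho\mathcal{L}_i(y,p)\in L^2(Q_T)$, $\rho_5\mathcal{L}_i(y,p)_t\in L^2(Q_T)$ and $\mathcal{L}_i(y,p)(0)\in H_0^1(\Omega)$ are exactly the defining conditions of $\mathcal{Y}$ and appear in $\|\cdot\|_{\mathcal{Y}}$. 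Hence the whole matter reduces to showing $\mathcal{N}_0,\mathcal{N}_1\in\mathcal{F}$ with norms controlled by $\|(y,p,v)\|_{\mathcal{Y}}$.

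The key preliminary step is to turn the abstract membership $(y,p,v)\in\mathcal{Y}$ into quantitative weighted bounds on $y$ and $p$. Setting $f:=\mathcal{L}_1(y,p)$ and $g:=\mathcal{L}_2(y,p)$, the pair $(y,p)$ is precisely the solution of \eqref{sistema2_linear} with source $(f,g)$, control $v$ and initial datum $(y(0),p(0))$, and the defining conditions of $\mathcal{Y}$ are exactly the hypotheses of Propositions \ref{Prop3} and \ref{Prop4}. I would therefore invoke \eqref{eqb-prop3}, \eqref{eqc-prop3} and \eqref{eq1-Prop4} to bound, by $C\|(y,p,v)\|_{\mathcal{Y}}^2$, all the relevant weighted quantities: the suprema $\sup_t\rho_3^2(\|y\|^2+\|p\|^2)$, $\sup_t\rho_4^2(\|\nabla y\|^2+\|\nabla p\|^2)$, $\sup_t\rho_5^2(\|\Delta y\|^2+\|\Delta p\|^2)$, $\sup_t\rho_6^2(\|\nabla y_t\|^2+\|\nabla p_t\|^2)$, together with the space–time integrals of $\rho_4^2(|y_t|^2+|\Delta y|^2)$ and $\rho_6^2(|y_{tt}|^2+|\Delta y_t|^2+|\nabla y_t|^2)$ and their $p$-analogues.

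With these a priori bounds, I would estimate each term of $\mathcal{N}_0,\mathcal{N}_1$. By Taylor's formula and \eqref{Hyp5}--\eqref{Hyp6} every coefficient is bounded and each difference $\kappa(y)-\kappa(0)$, $\sigma(y)-\sigma(0)$, $\sigma(y)-\sigma(0)-\sigma'(0)y$ carries one or two explicit factors of $y$, so every product is quadratic in $(y,p)$ times a bounded coefficient or a factor of $z^*$. For the bound $\rho\mathcal{N}_i\in L^2(Q_T)$ I would split the weight $\rho$ into a product of two of the controlled weights, place one factor in $L^\infty_t$ (using the suprema above) and the other in $L^2_t$, and distribute the spatial regularity with the embeddings $H^2(\Omega)\hookrightarrow L^\infty(\Omega)$ and $H^1(\Omega)\hookrightarrow L^6(\Omega)$, valid for $N\le 3$ (e.g. $\|y\|_{L^\infty}\le C\|y\|_{H^2}$ and $\|\nabla y\|_{L^6}\le C\|y\|_{H^2}$); the purely $z^*$-dependent terms are handled by \eqref{Hyp7} (for $\nabla z^*,\Delta z^*\in L^\infty$) and by the weighted integrability \eqref{Hyp9} near $t=T$. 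The trace at $t=0$, needed for $\mathcal{N}_i(0)\in H_0^1(\Omega)$, uses $(y(0),p(0))\in H^3\cap H_0^1$, the embedding/algebra property $H^3(\Omega)\hookrightarrow W^{1,\infty}(\Omega)$ and \eqref{Hyp10} for the $z^*(0)$-terms, while the homogeneous boundary conditions ensure the products vanish on $\partial\Omega$. Continuity of $\mathcal{A}$ then follows from the structure of the nonlinearity as a finite sum of products of superposition operators $\kappa(y),\sigma(y),\dots$ (Lipschitz on bounded sets by \eqref{Hyp4},\eqref{Hyp6}) and of derivatives of $y,p$: differences $\mathcal{A}(y,p,v)-\mathcal{A}(\tilde y,\tilde p,\tilde v)$ telescope into sums in which at least one factor is a difference and the rest are bounded in the $\mathcal{Y}$-norm, yielding a local Lipschitz estimate.

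The main obstacle will be the time-derivative condition $\rho_5\,\partial_t\mathcal{N}_i\in L^2(Q_T)$. Differentiating the quadratic terms produces top-order factors $y_{tt},\Delta y_t,\nabla y_t$ and $z^*_t$-factors, the first of which are controlled only with the weakest weight $\rho_6$ through \eqref{eq1-Prop4}; for each such product one must pair the top-order factor against lower-order ones carrying $\rho_4,\rho_5$ and verify, term by term, that the total weight is dominated by $\rho_5$. Closing this bookkeeping requires more than the comparisons \eqref{eq:comp_weights}: one needs the explicit form of the weights, and in particular that the exponential factor $e^{s\overline{\hat\alpha}}$ blowing up at $t=T$ dominates any polynomial power of $(\overline{\xi})^+$, so that ratios such as $\rho_5^2\rho_6^{-2}=((\overline{\xi})^+)^2$ can be reabsorbed into $\rho_5^2$ when multiplied by the supremum bound for $\|\Delta y\|$. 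Getting this to work uniformly for the mixed terms $\sigma'(y)\nabla y\cdot\nabla p$ and for the $z^*_t$-terms, where \eqref{Hyp9} must be used, is the delicate part of the argument.
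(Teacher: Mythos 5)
Your proposal is correct and follows essentially the same route as the paper: both split $\mathcal{A}_0,\mathcal{A}_1$ into the linear operators $\mathcal{L}_1,\mathcal{L}_2$ (which lie in $\mathcal{F}$ by the very definition of $\mathcal{Y}$) plus at-least-quadratic remainders, and then estimate those remainders --- their weighted $L^2$ norms, their time derivatives against $\rho_5$, and their traces at $t=0$ --- via the weighted estimates of Propositions \ref{Prop3} and \ref{Prop4} together with the embeddings $H^2(\Omega)\hookrightarrow L^\infty(\Omega)$ and $H^1(\Omega)\hookrightarrow L^6(\Omega)$. The differences are only presentational: the paper keeps the remainders in divergence form and defers the weight bookkeeping, the initial-trace computation and the continuity of $\mathcal{A}$ to the analogous computations in \cite{Thermistor_HNLP-23}, whereas you expand the remainders and carry out the $\rho_5^2\rho_6^{-2}=((\overline{\xi})^+)^2$ reabsorption explicitly, which is indeed the delicate point that \eqref{eq:comp_weights} alone does not settle.
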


\begin{proof}
    We want to show that $\mathcal{A}(y,p,v)$ belongs to $ \mathcal{Z}$, for every $(y,p,v)\in  \mathcal{Y}$. We will therefore show that each $\mathcal{A}_{i}(y,p,v)$, with $i=\lbrace 0,1, 2, 3\rbrace$, defined in \eqref{aplicação A} belongs to its respective space. Notice that, 
    \begin{equation*}
        \begin{array}{lll}
             \displaystyle\int_{Q_T}\rho^{2}|\mathcal{A}_{0}(y,p,v)|^{2}dxdt &\leq  5\displaystyle\int_{Q_T}\rho^{2}|\mathcal{L}_1(y,p)|^{2}dxdt + 5\displaystyle\int_{Q_T} \rho^{2} |\sigma(0)|^2 |\nabla p|^4dxdt
              \\
             &\phantom{A} + 5\displaystyle\int_{Q_T}\rho^{2} \left| \nabla \cdot [(\kappa(y)-\kappa(0))\nabla y] \right|^{2}dxdt \\
             &\phantom{A}+ 5 \displaystyle\int_{Q_T} \rho^{2} |\sigma(y)-\sigma(0)-\sigma'(0)y|^2 |\nabla z^*|^4 dxdt\\
             &\phantom{A}+ 5\displaystyle\int_{Q_T} \rho^{2}  |\sigma(y)-\sigma(0)|^2 |\nabla p|^2 |\nabla z^*|^2 dxdt \\
             &=  5 I_{1} + 5 I_{2} + 5 I_3 + 5 I_4 + 5 I_5.
        \end{array}
    \end{equation*}
    
It is immediate by definition of the space $\mathcal{Y}$ that $I_{1} \leq C \|(y,p,h)\|^{2}_{\mathcal{Y}}$. Furthermore, using Propositions \ref{Prop3} and \ref{Prop4}, and the continuous embedding $H^2(\Omega) \hookrightarrow L^\infty(\Omega)$, valid for $1 \leq N \leq 3$, we obtain, for $i=2,3,4, 5$ that
\begin{equation*}
    \begin{array}{l}
        I_{i} \leq C\|(y,p,v)\|^{4}_{\mathcal{Y}}.
    \end{array}
\end{equation*}
On the other hand,
    \begin{equation*}
        \begin{array}{lll}
             \displaystyle\int_{Q_T}\rho^{2}_5|[\mathcal{A}_{0}(y,p,v)]_t|^{2}dxdt &\leq & 5\displaystyle\int_{Q_T}\rho^{2}_5 |\mathcal{L}_1(y,p)_t|^{2}dxdt + 5\displaystyle\int_{Q_T} \rho^{2}_5 | [\sigma(0) |\nabla p|^2]_t|^2 dxdt
             \\
             & &+ 5\displaystyle\int_{Q_T}\rho^{2}_5 \left|\nabla \cdot [(\kappa(y)-\kappa(0))\nabla y]_t \right|^{2}dxdt \\
             & &+ 5 \displaystyle\int_{Q_T} \rho^{2}_5 \left| [(\sigma(y)-\sigma(0)-\sigma'(0)y) |\nabla z^*|^2 ]_t \right|^2 dxdt\\
             & &+ 5\displaystyle\int_{Q_T} \rho^{2}_5 \left|[ (\sigma(y)-\sigma(0)) \nabla p \cdot \nabla z^*]_t \right|^2 dxdt \\
             & = & 5 J_{1} + 5 J_{2} + 5 J_3 + 5 J_4 + 5 J_5.
        \end{array}
    \end{equation*}
    
Standard estimates as above, using Propositions \ref{Prop3} and \ref{Prop4}, and the continuous embedding $H^2(\Omega) \hookrightarrow L^\infty(\Omega)$, give
$$
    J_1 \leq C \|(u,p,v)\|_{\mathcal{Y}}^2 \quad\text{and} \quad J_i \leq C \left(\|(u,p,v)\|_{\mathcal{Y}}^4 + \|(u,p,v)\|_{\mathcal{Y}}^6  \right), \quad \text{for } i=2,3,4,5.
$$

We also have, using the definition of the space $\mathcal{Y}$ and the continuous embedding $H^2(\Omega) \hookrightarrow L^\infty(\Omega)$, the same computation as in Lemma 3.1, item a), of \cite{Thermistor_HNLP-23},
\begin{equation*}
\begin{split}
    \|\nabla [\mathcal{A}_0(y,p,v)](0)\|^2 & \leq 3 \int_Q |\nabla\mathcal{L}_1(y,p)(0)|^2 + 3 \int_Q |\nabla \left[ \nabla \cdot (\kappa(y)-\kappa(0))\nabla y \right](0)|^2\\
    &\quad + 3 \int_Q |\nabla \left[\sigma(y)|\nabla p|^2 \right](0)|^2\\  
    &\leq C \left( \|(y,p,v)\|_{\mathcal{Y}}^2 + \|(y,p,v)\|_{\mathcal{Y}}^4 + \|(y,p,v)\|_{\mathcal{Y}}^6 \right).
\end{split}
\end{equation*}

Therefore, $\mathcal{A}_{0}(y,p,v) \in \mathcal{F}.$

Now, for the component $\mathcal{A}_{1}(y,p,v)$ we have:
\begin{equation*}
\begin{array}{lll}
    \Vert\mathcal{A}_{1}(y,p,v)\Vert^{2}_{\mathcal{F}}
    &\leq & 3\displaystyle\int_{Q_T}\rho^{2} |\mathcal{L}_2(y,p)|^{2}dxdt \\
    &\quad & + \ 3\displaystyle\int_{Q_T}\rho^{2} \left| \nabla \cdot ((\sigma(y)-\sigma(0))\nabla p) \right|^{2}dxdt \\
    &\quad & + \
    3\displaystyle\int_{Q_T}\rho^{2} \left| \nabla\cdot[ (\sigma(y)-\sigma(0)-\sigma'(0)y ) \nabla z^* ] \right|^{2}dxdt \\
    & = & 3I_{3} + 3I_{4}+3I_5.
\end{array}
\end{equation*}

By definition of the space $\mathcal{Y}$ we have that $I_{3}\leq C \|(y,p,v)\|^{2}_{\mathcal{Y}}$. Following similar computations as for the component $\mathcal{A}_0$, together with \cite{Thermistor_HNLP-23}, we get
$$
    \|\mathcal{A}_1(y,p,v)\|^2_{\mathcal{F}} \leq C \left(\|(y,p,v)\|_{\mathcal{F}}^2 + \|(y,p,v)\|_{\mathcal{F}}^4 + \|(y,p,v)\|_{\mathcal{F}}^6 \right),
$$
$$
    \|[\mathcal{A}_1(y,p,v)]_t\|^2_{\mathcal{F}} \leq C \left(\|(y,p,v)\|_{\mathcal{F}}^2 + \|(y,p,v)\|_{\mathcal{F}}^4 + \|(y,p,v)\|_{\mathcal{F}}^6 \right).
$$
and
\begin{equation*}
    \|\nabla [\mathcal{A}_1(y,p,v)](0)\|^2 \leq C \left( \|(y,p,v)\|_{\mathcal{Y}}^2 + \|(y,p,v)\|_{\mathcal{Y}}^4 + \|(y,p,v)\|_{\mathcal{Y}}^6 \right).
\end{equation*}
 
Thus $\mathcal{A}$ is well-defined.

\end{proof}

\begin{lemma}\label{DA continuo}
    The mapping $\mathcal{A}: \mathcal{Y} \rightarrow  \mathcal{Z}$ is continuously differentiable.
\end{lemma}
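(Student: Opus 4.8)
The plan is to obtain the Gâteaux derivative of $\mathcal{A}$ by formal linearization, to verify that the resulting candidate $\mathcal{A}'(y,p,v)$ is a bounded linear operator from $\mathcal{Y}$ into $\mathcal{Z}$ for each $(y,p,v)\in\mathcal{Y}$, and finally to show that the map $(y,p,v)\mapsto\mathcal{A}'(y,p,v)$ is continuous for the operator norm of $\mathcal{L}(\mathcal{Y},\mathcal{Z})$; together with a control of the first-order Taylor remainder this upgrades the Gâteaux derivative to a Fréchet derivative and yields the $\mathcal{C}^1$ property. Since $\mathcal{A}_2$ and $\mathcal{A}_3$ are the (linear, bounded) trace maps $(y,p,v)\mapsto y(\cdot,0)$ and $(y,p,v)\mapsto p(\cdot,0)$ into $H^3(\Omega)\cap H^1_0(\Omega)$ — whose norms are part of the definition of $\|\cdot\|_{\mathcal{Y}}$ — they are affine and hence $\mathcal{C}^\infty$; thus all the work concerns $\mathcal{A}_0,\mathcal{A}_1\colon\mathcal{Y}\to\mathcal{F}$. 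Differentiating \eqref{aplicação A} in a direction $(Y,P,V)$ produces the candidate
\begin{align*}
\mathcal{A}_0'(y,p,v)(Y,P,V) ={}& Y_t - \nabla\cdot\big(\kappa(y)\nabla Y + \kappa'(y)\,Y\,\nabla y\big) - \sigma'(y)\,Y\,|\nabla p|^2 - 2\sigma(y)\,\nabla p\cdot\nabla P \\
&{} - 2\sigma'(y)\,Y\,\nabla p\cdot\nabla z^* - 2\sigma(y)\,\nabla P\cdot\nabla z^* - \sigma'(y)\,Y\,|\nabla z^*|^2 + V\,1_\omega, \\
\mathcal{A}_1'(y,p,v)(Y,P,V) ={}& P_t - \nabla\cdot\big(\sigma(y)\nabla P + \sigma'(y)\,Y\,\nabla p\big) - \nabla\cdot\big(\sigma'(y)\,Y\,\nabla z^*\big),
\end{align*}
together with $\mathcal{A}_2'(Y,P,V)=Y(\cdot,0)$ and $\mathcal{A}_3'(Y,P,V)=P(\cdot,0)$.

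For boundedness I would observe that every monomial above is a bounded multilinear form evaluated at factors that are either linear images of $(Y,P,V)$ or Nemytskii images $\kappa(y),\kappa'(y),\sigma(y),\sigma'(y)$ of the base point. To estimate these in the $\mathcal{F}$-norm I would reuse verbatim the three families of bounds established in Lemma \ref{A bem definido}: the weighted $L^2$ estimates (the $I_i$), the time-derivative estimates (the $J_i$) and the $H^1_0$-trace estimate for $\|\nabla[\cdots](0)\|$, replacing in each case one factor of the base point by the corresponding factor of the direction. The regularity required on $y,p$ — namely $\Delta y,\Delta p,\nabla y_t,\nabla p_t$ in weighted $L^2$ and the traces at $t=0$ in $H^3(\Omega)\cap H^1_0(\Omega)$ — is precisely what Propositions \ref{Prop3} and \ref{Prop4} provide for elements of $\mathcal{Y}$, and the pointwise bounds $|\kappa'|,|\kappa''|,|\sigma'|,|\sigma''|\le M$ from \eqref{Hyp4}--\eqref{Hyp6}, combined with $H^2(\Omega)\hookrightarrow L^\infty(\Omega)$ (valid for $N\le3$), turn each product into a controllable one via Hölder's inequality. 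This yields $\|\mathcal{A}'(y,p,v)(Y,P,V)\|_{\mathcal{Z}}\le C\big(\|(y,p,v)\|_{\mathcal{Y}}\big)\,\|(Y,P,V)\|_{\mathcal{Y}}$, with $C$ a polynomial in $\|(y,p,v)\|_{\mathcal{Y}}$, so $\mathcal{A}'(y,p,v)\in\mathcal{L}(\mathcal{Y},\mathcal{Z})$.

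For the continuity of $(y,p,v)\mapsto\mathcal{A}'(y,p,v)$ I would estimate the operator-norm difference $\mathcal{A}'(y_1,p_1,v_1)-\mathcal{A}'(y_2,p_2,v_2)$ term by term. Each term depends on the base point either linearly (through $\nabla y,\nabla p,\Delta y,\dots$), whose differences are handled directly, or through a Nemytskii operator, where the differences $\kappa'(y_1)-\kappa'(y_2)$ and $\sigma'(y_1)-\sigma'(y_2)$ are controlled by the mean value theorem and the bound on $\kappa'',\sigma''$, giving the pointwise Lipschitz estimates $|\kappa'(y_1)-\kappa'(y_2)|\le M|y_1-y_2|$ and similarly for $\sigma'$. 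Feeding these into the same Hölder/embedding machinery shows that $(y,p,v)\mapsto\mathcal{A}'(y,p,v)$ is locally Lipschitz, hence continuous. Finally, the first-order Taylor remainder of $\mathcal{A}$ is a sum of terms of the same polynomial type carrying an extra factor of $\|(Y,P,V)\|_{\mathcal{Y}}$, so it is $o\big(\|(Y,P,V)\|_{\mathcal{Y}}\big)$, which promotes the Gâteaux derivative to the Fréchet derivative and completes the proof that $\mathcal{A}\in\mathcal{C}^1(\mathcal{Y},\mathcal{Z})$.

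The main obstacle will be the two strongest components of the $\mathcal{F}$-norm in the targets of $\mathcal{A}_0,\mathcal{A}_1$: the weighted time-derivative piece $\rho_5(\cdot)_t$ and the $H^1_0$-trace at $t=0$. Differentiating a product such as $\kappa'(y)\,Y\,\nabla y$ in $t$ generates terms like $\kappa''(y)\,y_t\,Y\,\nabla y$, $\kappa'(y)\,Y_t\,\nabla y$ and $\kappa'(y)\,Y\,\nabla y_t$, each coupling the highest available derivatives of the base point and of the direction; closing these requires the full strength of the weight comparisons \eqref{eq:comp_weights} (so that $\rho_5$ can be distributed across the factors) together with the $\sup$-in-time control of $\|\Delta(\cdot)\|$ and $\|\nabla(\cdot)_t\|$ from Proposition \ref{Prop4}. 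The $t=0$ trace contributions are delicate for the same reason and would be handled exactly as the estimate for $\|\nabla[\mathcal{A}_0(y,p,v)](0)\|^2$ in Lemma \ref{A bem definido}, now with one factor linearized. Once these two pieces are secured, the remaining estimates are routine repetitions of the well-definedness argument of Lemma \ref{A bem definido}.
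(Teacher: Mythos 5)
Your proposal is correct and follows essentially the same route as the paper's proof: the same linearized operator (your formulas agree with the paper's $D\mathcal{A}$), boundedness and continuity of $(y,p,v)\mapsto\mathcal{A}'(y,p,v)$ obtained by reusing the estimates of Lemma \ref{A bem definido} and Propositions \ref{Prop3}--\ref{Prop4} together with H\"older, $H^2(\Omega)\hookrightarrow L^\infty(\Omega)$ and the mean value theorem with the bounds \eqref{Hyp6}, and the standard upgrade from a continuous G\^ateaux derivative to Fr\'echet $\mathcal{C}^1$ differentiability. The only cosmetic difference is that you additionally control the first-order Taylor remainder directly, whereas the paper verifies convergence of the difference quotients by dominated convergence (deferring the computation to \cite{Thermistor_HNLP-23}); both rest on the same machinery.
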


\begin{proof}
First we prove that $\mathcal{A}$ is Gateaux differentiable at any $(y,p,v) \in \mathcal{Y}$ and let us compute the
$\textit{G-derivative}$ ${\mathcal{A}}^{\prime}(y, p, v)$.
Consider the linear mapping $D \mathcal{A}: \mathcal{Y} \to \mathcal{Z}$ given by
$$
D\mathcal{A}(y,p,v) = (D\mathcal{A}_0,D\mathcal{A}_1,D\mathcal{A}_2,D\mathcal{A}_3),
$$
where, for $(\bar y, \bar p,\bar v) \in \mathcal{Y}$,
\begin{equation}
    \left\{
    \begin{array}{ll}
        D\mathcal{A}_0(\bar y, \bar p,\bar v) &=   \, \bar y_t - \nabla \cdot [\kappa'(y)\bar y \nabla y] - \nabla \cdot (\kappa(y) \nabla \bar y) - 2\sigma(y)\nabla p \cdot \nabla \bar p - \sigma'(y) \bar y |\nabla p|^2  \\
        &\phantom{=}- 2\sigma(y) \nabla \bar p \cdot \nabla z^* -2\sigma'(y)\bar y \nabla p \cdot \nabla z^* - \sigma'(y) \bar y |\nabla z^*|^2 - \bar v 1_\omega, \\
        D\mathcal{A}_1(\bar y, \bar p,\bar v) &= \bar p_t - \nabla \cdot (\sigma'(y) \bar y \nabla p) - \nabla \cdot (\sigma(y) \nabla \bar p) - \nabla \cdot (\sigma'(y)\bar y \nabla z^*), \\
        D\mathcal{A}_2(\bar y, \bar p,\bar v) &= \, \bar y(0),\\
        D\mathcal{A}_3(\bar y, \bar p,\bar v) &= \, \bar p(0).
    \end{array}
    \right.    
\end{equation}

We have to show that, for $i=0,1,2,3$, 
$$
\frac{1}{\lambda}\left[ \mathcal{A}_i ((y,p,v)+\lambda(\bar y, \bar p,\bar v)) - \mathcal{A}_i (y,p,v) \right] \to D\mathcal{A}_i(\bar y, \bar p,\bar v),
$$
strongly in the corresponding factor of $\mathcal{Z}$ as $\lambda \to 0$.

Indeed, using Proposition \ref{Prop3}, we have
\begin{equation*}
\begin{split}
    &\left\| \frac{1}{\lambda}\left[ \mathcal{A}_0 ((y,p,v)+\lambda(\bar y, \bar p,\bar v)) - \mathcal{A}_0(\bar y, \bar p,\bar v) \right] - D\mathcal{A}_0(y,p,v) \right\|^{2}_{L^2(\rho^2,Q_T)} \to 0,\\
\end{split}
\end{equation*}
as it is the same computation, using the Mean value theorem and the Lebesgue dominated theorem, as in Lemma 3.1 (b) of \cite{Thermistor_HNLP-23}. Here we used the notation
$\|f\|_{L^2(\rho^2,Q_T)} = \int_{Q_T} \rho^2 |f|^2$.

Analogously, Proposition \ref{Prop3} gives also 
\begin{equation}
\begin{split}
    &\left\| \frac{1}{\lambda}\left[ \mathcal{A}_1 ((y,p,v)+\lambda(\bar y, \bar p,\bar v)) - \mathcal{A}_1 (y,p,v) \right] - D\mathcal{A}_1(\bar y, \bar p,\bar v) \right\|^{2}_{L^2(\rho^2,Q_T)} \to 0, 
\end{split}
\end{equation}
as $\lambda \to 0$, since the only new term with respect to the computation in \cite{Thermistor_HNLP-23} is the term $\bar p_t$ in $D\mathcal{A}_1$ which is canceled by the term
$\displaystyle \lim_{\lambda \to 0}  \frac{1}{\lambda}\left[ (p_t - \lambda \bar p_t) - p_t \right]$.

This finish the proof that $\mathcal{A}$ is Gateaux differentiable, with a \textit{G-derivative} $\mathcal{A^{\prime}}(y,p,v)= D\mathcal{A}(y,p,v)$.

Now take $(y,p,v)\in \mathcal{Y}$ and let $((y_{n},p_{n},v_{n}))_{n \in \mathbb{N} }$ be a sequence which converges to  $(y,p,v)$ in $\mathcal{Y}$. 

Then, arguing as in \cite{Thermistor_HNLP-23} we get, for $i=0,1$, and for each $(\bar{y},\bar{p},\bar{v})\in B_{r}(0)$,
\begin{align*} 
    &\|(D\mathcal{A}_i(y_{n},p_{n},v_{n})-D\mathcal{A}_i(y,p,v))(\bar y, \bar p,\bar v)\|_{\mathcal{Z}} \\
    &\phantom{AAAAAA} \leq C \|(\bar y, \bar p,\bar v)\|_\mathcal{Y} \|(y_{n}-y),(p_{n}-p),(v_{n}-v)\|_{\mathcal{Y}}\rightarrow 0,
\end{align*}
as $n \to \infty$.
Indeed, as above the new term $\bar p_t$ in $D\mathcal{A}_1$ cancels in the difference $(D\mathcal{A}_i(y_{n},p_{n},v_{n})-D\mathcal{A}_i(y,p,v))(\bar y, \bar p,\bar v)$.

Therefore, $(y,p,v)\mapsto\mathcal{A}^{\prime}(y,p,v)$ is continuous from $\mathcal{Y}$ into $\mathcal{L}(\mathcal{Y},\mathcal{Z})$ and, in view of classical results, we have that $\mathcal{A}$ is Fr\'echet-differentiable and $\mathcal{C}^{1}$.
\end{proof}

\begin{lemma}\label{Mapa sobrejetivo}
Let $\mathcal{A}$ be the mapping in \eqref{aplicação A}. Then, $\mathcal{A}^{\prime}(0,0,0)$ is onto.
\end{lemma}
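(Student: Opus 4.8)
The plan is to identify $\mathcal{A}'(0,0,0)$ with the operator governing the linear control problem \eqref{sistema2_linear} and then read off surjectivity directly from the null controllability already established in Propositions \ref{Prop3} and \ref{Prop4}. First I would evaluate the $G$-derivative $D\mathcal{A}$ computed in the proof of Lemma \ref{DA continuo} at $(y,p,v)=(0,0,0)$. Since $\nabla 0 = 0$, every term carrying a factor $\nabla y$, $\nabla p$ or $|\nabla p|^2$ vanishes, leaving
\begin{equation*}
\mathcal{A}'(0,0,0)(\bar y,\bar p,\bar v) = \big( \mathcal{L}_1(\bar y,\bar p),\ \mathcal{L}_2(\bar y,\bar p),\ \bar y(0),\ \bar p(0) \big),
\end{equation*}
with $\mathcal{L}_1,\mathcal{L}_2$ the linear operators defined just before \eqref{eq:espaceY}. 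Hence $\mathcal{A}'(0,0,0)(\bar y,\bar p,\bar v)=(f,g,y_0,p_0)$ means exactly that $(\bar y,\bar p,\bar v)$ solves \eqref{sistema2_linear} with source $(f,g)$, initial data $(y_0,p_0)$ and control $\bar v$. Thus surjectivity of $\mathcal{A}'(0,0,0)$ is equivalent to producing, for each $(f,g,y_0,p_0)\in\mathcal{Z}$, such a controlled trajectory lying in $\mathcal{Y}$.

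For the construction of a preimage, I would take an arbitrary $(f,g,y_0,p_0)\in\mathcal{Z}$ and observe that the definitions of $\mathcal{Z}$ and $\mathcal{F}$ encode precisely the hypotheses of Propositions \ref{Prop3} and \ref{Prop4}: namely $\rho f,\rho g\in L^2(Q_T)$, $\rho_5 f_t,\rho_5 g_t\in L^2(Q_T)$, $f(0),g(0)\in H_0^1(\Omega)$ and $(y_0,p_0)\in[H^3(\Omega)\cap H_0^1(\Omega)]^2$. Applying Proposition \ref{Prop3} yields a control $\bar v$ and a state $(\bar y,\bar p)$ solving \eqref{sistema2_linear} and satisfying the weighted estimates \eqref{eqa1-prop3}--\eqref{eqc-prop3}, while Proposition \ref{Prop4} supplies the higher-order weighted bounds \eqref{eq1-Prop4}.

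It then remains to check each defining condition of $\mathcal{Y}$ in \eqref{eq:espaceY}. The homogeneous boundary condition is automatic, and $\rho_0\bar v\in L^2(\omega\times(0,T))$ together with $\rho_2\bar y,\rho_2\bar p\in L^2(Q_T)$ come from \eqref{eqa1-prop3}. For the remaining, operator-valued conditions I would use the crucial observation that, because $(\bar y,\bar p)$ solves \eqref{sistema2_linear}, one has $\mathcal{L}_1(\bar y,\bar p)=f$ and $\mathcal{L}_2(\bar y,\bar p)=g$ \emph{identically}; hence $\rho\mathcal{L}_1(\bar y,\bar p)=\rho f$, $\rho_5\mathcal{L}_1(\bar y,\bar p)_t=\rho_5 f_t$ and $\mathcal{L}_1(\bar y,\bar p)(0)=f(0)$ (and analogously for $\mathcal{L}_2$ and $g$), so these conditions transfer immediately from the membership $(f,g)\in\mathcal{F}^2$ with no extra regularity of $(\bar y,\bar p)$ needed. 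Since $\bar y(0)=y_0$ and $\bar p(0)=p_0$ lie in $H^3(\Omega)\cap H_0^1(\Omega)$, all conditions hold and $(\bar y,\bar p,\bar v)\in\mathcal{Y}$. Combining the estimates of Propositions \ref{Prop3} and \ref{Prop4} even yields $\|(\bar y,\bar p,\bar v)\|_{\mathcal{Y}}\le C\|(f,g,y_0,p_0)\|_{\mathcal{Z}}$, so $\mathcal{A}'(0,0,0)$ admits a bounded right inverse and is in particular onto.

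The analytic substance lies entirely in Propositions \ref{Prop3} and \ref{Prop4}, which I may assume here; given them, the only delicate point is recognizing that the operator-valued conditions defining $\mathcal{Y}$ collapse, along the controlled trajectory, to conditions on the data $(f,g)$ that are exactly those built into $\mathcal{Z}$. This precise compatibility between the weights appearing in $\mathcal{Y}$, in $\mathcal{Z}$, and in the Carleman-based estimates of Propositions \ref{Prop3}--\ref{Prop4} is what guarantees that the right inverse lands in $\mathcal{Y}$, and it is the step I expect to require the most care.
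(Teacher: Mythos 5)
Your proposal is correct and follows essentially the same route as the paper: the paper's proof likewise takes $(f,g,y_0,p_0)\in\mathcal{Z}$, invokes Propositions \ref{Prop3} and \ref{Prop4} to produce a controlled trajectory of the linearized system \eqref{sistema2_linear}, and concludes that this triple lies in $\mathcal{Y}$ and is a preimage under $\mathcal{A}'(0,0,0)$. Your write-up merely makes explicit two steps the paper leaves implicit — the identification of $\mathcal{A}'(0,0,0)$ with $(\mathcal{L}_1,\mathcal{L}_2,\cdot(0),\cdot(0))$ and the observation that along the controlled trajectory the conditions $\rho\mathcal{L}_i\in L^2$, $\rho_5(\mathcal{L}_i)_t\in L^2$, $\mathcal{L}_i(0)\in H_0^1(\Omega)$ collapse to the membership $(f,g)\in\mathcal{F}^2$ — which is a faithful elaboration rather than a different argument.
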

\begin{proof}
Let $(f,g, y_{0},p_0)\in \mathcal{Z}$. From Proposition \ref{Prop3} we know there exists $(y,p,v)$ satisfying \eqref{sistema2} and \eqref{eqa1-prop3}. Furthermore, from Proposition \ref{Prop4} we have that $(y,p,v)$ satisfies \eqref{eq1-Prop4}.

Consequently, $(y,p,v)\in \mathcal{Y}$ and $$\mathcal{A}^{\prime}(0,0,0)(y,p,v)=(f,g,y_{0},z_0).$$ 
This ends the proof.
\end{proof}

\subsection{Local null controllability of the nonlinear system (\ref{sistema2})} \label{subsec:control_for_nonlinear_system}

\noindent According to Lemmas \ref{A bem definido}-\ref{Mapa sobrejetivo} we can apply the Inverse Mapping Theorem (Theorem \ref{Liusternik}) and consequently there exists $\delta > 0$ and a mapping $W:B_{\delta}(0)\subset {\mathcal{Z}}\rightarrow {\mathcal{Y}}$ such that
\begin{equation*}
    W(f,g,y_0,z_0)\in B_{r}(0)\,\,\, \text{and}\,\,\, {\mathcal{A}}(W(f,g,y_0,p_0))=(f,g,y_0,p_0), \,\,\, \forall (f,g,y_0,p_0)\in B_{\delta}(0).
\end{equation*}

From hypothesis \eqref{Hyp9}-\eqref{Hyp10}, we can take $(\sigma(0)|\nabla z^*|^2, \sigma(0) \Delta z^* - (z^*)_t, y_{0}, p_0)\in B_{\delta}(0)$ and $$(y,p,v)=W(\sigma(0)|\nabla z^*|^2, \sigma(0) \Delta z^* - (z^*)_t,y_{0},p_0) \in \mathcal{Y}.$$
So, we have
\begin{equation*}
    {\mathcal{A}}(y,p,v)=(\sigma(0)|\nabla z^*|^2,\sigma(0) \Delta z^* - (z^*)_t,y_{0},p_0).
\end{equation*}
Thus, we conclude that \eqref{sistema2} is locally null controllable at time $T > 0$. This concludes the proof of Theorem \ref{maintheorem}.
\hfill\qed


\section{Additional remarks and open problems}
\label{sec:final_remarks}

\subsection{Large time null controllability of systems (\ref{sistema2}) and (\ref{sistema2_control2})}
\label{sec:large_time_control}

Let us recall the concept of \emph{large-time $R$-null controllability} from \cite{coron-07b}, see also \cite{Thermistor_HNLP-23}:

\begin{defi}
The system \eqref{sistema2} $($or \eqref{sistema2_control2}$)$ is \emph{large-time $R$-null controllable}, if for any $(y_0, p_0) \in [H^3(\Omega) \cap H_0^1(\Omega)]^2$ and $z^*$ satisfying \eqref{Hyp2}  with
$$
\|y_0\|_{H^3(\Omega)} + \|p_0\|_{H^3(\Omega)} + \|z^*\|_{H^1(0,\infty; H^3(\Omega))} < R, 
$$
there exists $T^*>0$ such that for any $T > T^*$, one can find a control $v \in L^2(\omega \times (0,T))$ such that the associated state $(y,p)$ of system \eqref{sistema2} $($or \eqref{sistema2_control2}$)$  satisfies\ \ $y(x,T)=0$  and\  $p(x,T)=0$  \text{in } $\Omega$.
\end{defi}

\begin{teo}\label{thm:large_time_result}
    Let $T_0 > 0$ be fixed. Under the assumptions \eqref{Hyp2}-\eqref{Hyp6}, there exists $r=r(\kappa_0,\kappa_1,\sigma_0,\sigma_1,M,\Omega,N)>0$ and $\overline{T} = \overline{T}(r,T_0) > 0$ such that, if
     \begin{equation}\label{Hyp7_T_large}
        z^* \in W^{2,\infty}(\Omega \times (\overline{T}, \overline{T}+T_0)),
    \end{equation}
    \begin{equation}\label{Hyp8_T_large}
        \nabla z^* \neq 0  \  \text{ in } \ \omega_0 \times (\overline{T}, \overline{T} + T_0),
    \end{equation}
    \begin{equation}\label{Hyp9_T_large}
        \rho(t- \overline{T}) |\nabla z^*|^2, \rho(t-\overline{T}) |\nabla z^*_t|^2, \rho(t-\overline{T}) \Delta z^*, \rho(t-\overline{T}) \Delta z^*_t, \rho(t-\overline{T}) z^*_t, \rho(t-\overline{T}) z^*_{tt} \in L^2(\Omega \times (\overline{T}, \overline{T} + T_0)),\  
    \end{equation}
    \begin{equation}\label{Hyp10_T_large}
        |\nabla z^*(\overline{T})|^2, \sigma(\overline{T}) \Delta z^*(\overline{T})- z^*_t(\overline{T}) \in H^1_0(\Omega).
    \end{equation}
    the system \eqref{sistema2} is large-time $R$-null controllable for $0 < R \leq r$. 
\end{teo}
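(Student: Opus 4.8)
The plan is to combine the exponential stability of the uncontrolled problem (Theorem \ref{Theo2}) with the local null controllability at a fixed time (Theorem \ref{maintheorem}), following the classical ``let the system relax, then control'' scheme used in \cite{coron-07b, Thermistor_HNLP-23}. First I would fix $r$ to be the smaller of the two smallness radii produced by Theorem \ref{Theo1} and Theorem \ref{Theo2}, so that for every $R \le r$ and every datum with $\|y_0\|_{H^3(\Omega)} + \|p_0\|_{H^3(\Omega)} + \|z^*\|_{H^1(0,\infty;H^3(\Omega))} < R$ the uncontrolled problem — system \eqref{sistema2} with $v\equiv 0$, which coincides with \eqref{sistema2_thermistor} for $f=g=0$ — admits a unique global strong solution $(y,p)$ obeying
$$
\|y(t)\|_{H^3(\Omega)}^2 + \|p(t)\|_{H^3(\Omega)}^2 \le \tilde{C}\, e^{-\rho t}, \qquad t\ge 0 .
$$
Running the system freely on $[0,\overline{T}]$ then brings the state at time $\overline{T}$ into an arbitrarily small $H^3$-ball, uniformly in $R \le r$, while hypothesis \eqref{Hyp3} simultaneously forces $\|z^*\|_{H^1(\overline{T},\infty;H^3(\Omega))}$ to be exponentially small.

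Next I would reinitialise at $t=\overline{T}$. Setting $\tau = t-\overline{T}$, $\tilde{z}^*(\tau)=z^*(\tau+\overline{T})$ and taking $(y(\overline{T}),p(\overline{T}))\in [H^3(\Omega)\cap H^1_0(\Omega)]^2$ as the new initial datum, the restriction of \eqref{sistema2} to $[\overline{T},\overline{T}+T_0]$ becomes an instance of the same control problem on the interval $[0,T_0]$ (the system is autonomous apart from the explicit $z^*$ and $(z^*)_t$ terms, which are absorbed into $\tilde{z}^*$). The key observation is that the translated hypotheses \eqref{Hyp7_T_large}--\eqref{Hyp10_T_large} are exactly \eqref{Hyp7}--\eqref{Hyp10} written for $\tilde{z}^*$ on $(0,T_0)$, so Theorem \ref{maintheorem} applies verbatim and yields, provided the new datum lies below the local radius $\epsilon$ of Definition \ref{def_1_Sec4}, a control $v\in L^2(\omega\times(\overline{T},\overline{T}+T_0))$ steering $(y,p)$ to $(0,0)$ at time $\overline{T}+T_0$. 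Extending $v$ by zero on $[0,\overline{T}]$, and using that the free solution at $\overline{T}$ is precisely the initial datum of the controlled problem, the two pieces glue into a single solution of \eqref{sistema2} on $[0,\overline{T}+T_0]$ that vanishes at the final time; I then set $T^* := \overline{T} + T_0$.

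It remains to fix $\overline{T}=\overline{T}(r,T_0)$ so that the smallness required by Theorem \ref{maintheorem} holds, namely
$$
\|y(\overline{T})\|_{H^3(\Omega)} + \|p(\overline{T})\|_{H^3(\Omega)} + \|\tilde{z}^*\|_{H^1(0,\infty;H^3(\Omega))} < \epsilon ,
$$
which follows from the two decay bounds above by taking $\overline{T}$ large. For a general target time $T>T^*$ I would repeat the construction with the longer relaxation window $[0,T-T_0]$ in place of $[0,\overline{T}]$: since $T-T_0>\overline{T}$ the stability estimate gives an even smaller state at $T-T_0$, and one controls on $[T-T_0,T]$ under the corresponding translate of \eqref{Hyp7_T_large}--\eqref{Hyp10_T_large}, reaching $(0,0)$ exactly at $T$.

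The delicate point — and the step I expect to be the main obstacle — is the compatibility of the two smallness scales. The local radius $\epsilon$ produced by Theorem \ref{maintheorem} is governed by the Carleman and observability constants of Propositions \ref{Prop_Benabdallah}--\ref{Observabillity}, which depend on $z^*$ through the non-degeneracy condition \eqref{Hyp8} (here \eqref{Hyp8_T_large}); as $\overline{T}$ grows, $z^*$ and hence $\nabla z^*$ decay, so the very coupling $\nabla z^*$ that drives controllability of the first equation through the second weakens, and $\epsilon$ may deteriorate. One must therefore verify that $\epsilon$ stays bounded below on the fixed control window $(\overline{T},\overline{T}+T_0)$ — this is precisely the reason for imposing \eqref{Hyp8_T_large}--\eqref{Hyp10_T_large} on that window rather than letting $\overline{T}\to\infty$ unconditionally — and then choose $\overline{T}$ large relative to this lower bound so that the exponentially small free state at $\overline{T}$ fits inside $B_\epsilon(0)$. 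Closing this circular dependence between $\overline{T}$ and $\epsilon$, uniformly for $R\le r$, is the crux; the gluing, the reinitialisation, and the verification that $(y(\overline{T}),p(\overline{T}))\in H^3(\Omega)\cap H^1_0(\Omega)$ are otherwise routine.
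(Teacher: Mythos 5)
Your construction coincides with the paper's own proof in its core: the paper likewise takes $r$ from the existence/stability results (Theorems \ref{Theo1}--\ref{Theo2}), lets the free solution decay on $[0,\overline{T}]$, uses \eqref{Hyp3} to make the tail $\|z^*\|_{H^1(\overline{T},\infty;H^3(\Omega))}$ exponentially small, chooses $\overline{T}$ explicitly of the form $\overline{T}>\max\{\rho^{-1}\ln(\cdot),\gamma^{-1}\ln(\cdot)\}$, and then applies the local result on the shifted window $(\overline{T},\overline{T}+T_0)$; this reinitialisation is recorded in the paper as Remark \ref{rem:shift_T}, whose content is precisely that $\epsilon=\epsilon(T_0)$ does not depend on the left endpoint of the window. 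The ``delicate point'' you flag at the end (possible deterioration of $\epsilon$ as $\overline{T}$ grows) is resolved exactly as you guessed: the hypotheses \eqref{Hyp7_T_large}--\eqref{Hyp10_T_large} are imposed on the one fixed window where control is exerted, so the Carleman/observability constants are those of a single problem of length $T_0$ and no uniformity over windows is needed.

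Where you genuinely diverge from the paper is the passage from $T=T^*:=\overline{T}+T_0$ to arbitrary $T>T^*$, and here the two routes trade different weaknesses. The paper keeps the control window fixed and extends $v$ by zero on $[T^*,T]$; but this implicitly assumes that the null state persists there, and $(y,p)\equiv(0,0)$ is \emph{not} a solution of \eqref{sistema2} with $v=0$ unless $\sigma(0)|\nabla z^*|^2\equiv 0$ and $\sigma(0)\Delta z^*\equiv(z^*)_t$, since the $z^*$-source terms do not vanish when the state does — a point the paper's proof leaves unaddressed. Your variant (relax on $[0,T-T_0]$, control on $[T-T_0,T]$) avoids that problem and gives exact vanishing at time $T$, but the controllability step then needs the non-degeneracy, regularity, weight, and compatibility conditions of Theorem \ref{maintheorem} on the shifted window $(T-T_0,T)$, i.e.\ translates of \eqref{Hyp7_T_large}--\eqref{Hyp10_T_large} that the theorem as stated grants only on $(\overline{T},\overline{T}+T_0)$; you acknowledge this, but it is an additional hypothesis, not something recoverable from the statement. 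So: same skeleton, different final step, and a fully rigorous argument would either assume the hypotheses on every window $(S,S+T_0)$, $S\geq\overline{T}$ (your route), or add an assumption making $(0,0)$ invariant after $T^*$ (what the paper's gluing tacitly requires).
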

\begin{proof}
By Section \ref{sec3:stability}, there exists $r=r(\kappa_0, \kappa_1, \sigma_0, \sigma_1, M, \Omega, N)>0$ such that if
$$\|y_0\|_{H^3(\Omega)} + \|p_0\|_{H^3(\Omega)} + \|z^*\|_{H^1(0,\infty; H^3(\Omega))} < R \ \text{ with }  \ 0 < R \leq r,$$
we have
$$\|y(t)\|_{H^3(\Omega)}^2 + \|p(t)\|_{H^3(\Omega)}^2 \leq C(1 + R^2 + R^4 + R^6 + R^8) e^{-\rho t}.$$
By \eqref{Hyp3}, we have that
\begin{align*}
    \|z^*\|_{H^1(t,\infty;H^3(\Omega))}^2 &= \int_t^{+\infty} \left( \|z^*(s)\|_{H^3(\Omega)}^2 + \|z^*_t(s)\|_{H^3(\Omega)}^2  \right) ds \\
            &\leq C^* \int_t^{+\infty} e^{-\gamma s} ds = \frac{C^*}{\gamma}  e^{-\gamma t},\ \forall t \geq 0 .  
\end{align*}
\begin{rem}\label{rem:shift_T}
    Fixed $T_0>0$, by Section \ref{sec4:controllability} (Liusternik's Theorem) for any $T \geq 0$, there exists $\epsilon = \epsilon(T_0)>0$, independent of $T$ such that
    $$
    \|y(T)\|_{H^3(\Omega)}^2 + \|p(T)\|_{H^3(\Omega)}^2 + \|z^*\|_{H^1(T,\infty;H^3(\Omega))}^2 < \epsilon,
    $$
    there exists a control $v_0 \in L^2(\omega \times (T, T+T_0))$ such that the associated state $(y,p)$ satisfies
    $$y(x,T+T_0)=0 \quad \text{and} \quad p(x,T+T_0)=0 \quad \text{in }\Omega.$$
\end{rem}

Taking $\overline{T} > 0$ satisfying
$$\overline{T} > \max \left\{ \frac{1}{\rho} \ln \left( \frac{4C(1 + R^2 + R^4 + R^6 + R^8)}{\epsilon^2} \right), \frac{1}{\gamma} \ln\left( \frac{4C^*}{\gamma \epsilon^2} \right) \right\},$$
the solution $(y,p)$ satisfies 
\begin{equation}\label{eq:decay_at_T}
    \|y(\overline{T})\|_{H^3(\Omega)}^2 + \|p(\overline{T})\|_{H^3(\Omega)}^2 + \| z^* \|_{H^1(\overline{T},\infty;H^3(\Omega))}^2 < \epsilon.
\end{equation}

Denoting $T^* = \overline{T} + T_0$, by Remark \ref{rem:shift_T} with $T=\overline{T}$ and \eqref{Hyp7_T_large}-\eqref{eq:decay_at_T}, there exists a control $v^* \in L^2(\omega \times (T,T^*))$ such that
$$y(x,T^*) = 0  \ \text{ and }\  p(x,T^*)=0 \  \text{ in } \Omega.$$
Then, for any $T\geq T^*$, we can define the control $v$ such that
\begin{equation*}
    v(x,t) = \begin{cases}
        0, & \text{if}\quad 0 \leq t \leq \bar T,\\
        v^*(x,t), & \text{if}\quad \bar T \leq t \leq T^*,\\
        0, &\text{if}\quad T^* \leq t \leq T.
    \end{cases}
\end{equation*}
\end{proof}
\subsection{Further questions}

Finally, we present some open questions.

\begin{itemize}
    \item The local null controllability in the case of $N \geq 4$ is open in the case parabolic-parabolic studied in this paper but also in the case parabolic-elliptic treated in \cite{Thermistor_HNLP-23}. The main problem comes from the Sobolev immersion $H^2(\Omega) \hookrightarrow L^\infty(\Omega)$ used in the nonlinear estimates, which is valid for $1 \leq N \leq 3$.

    \item  In \cite{HMRR-10} a two-dimensional parabolic-elliptic thermistor problem is studied with mixed nonlinear boundary conditions and control acting on the boundary. Optimal control of this problem was shown in \cite{Hannes-existence-17} and \cite{Hannes-optimality-17} where maximal bounds on the temperature and on the potential were considered in order to perform a realistic modeling of the process.
    The controllability of problems \eqref{sistema2} and \eqref{sistema2_control2} with mixed-boundary conditions is an open problem.

    \item The system \eqref{sistema_thermistor} admits a hierarchical control formulation. In this situation there are follower controls looking to minimize some functional and a leader control which leads the state to zero. The local hierarchical controllability of the system \eqref{sistema_thermistor} when the followers admit a non-cooperative (or Nash) equilibrium is an ongoing project of the authors.
       
\end{itemize}

\textbf{Acknowledgments}

This study was financed in part by the Coordenação de Aperfeiçoamento de Pessoal de Nível Superior - Brasil (CAPES) - Finance Code 001.
L.Y. was partially supported by CAPES-Brazil. J.L. was partially supported by CNPq-Brazil


\end{document}